\def\eg{e.g.}
\def\mcM{{\mathcal M}}
\def\Hom{{\rm Hom}}
\def\Mor{{\rm Mor}}
\def\Hom{{\rm Hom}}
\def\lg{{\mathfrak g}}
\def\lh{{\mathfrak h}}\def\lk{{\mathfrak k}}\def\la{{\mathfrak a}}
\def\lh{{\mathfrak h}}
\def\CC{{\mathbb C}}\def\PP{{\mathbb P}}\def\ZZ{{\mathbb
    Z}}\def\RR{{\mathbb R}}
\def\QQ{{\mathbb Q}}
\def\hlambda{{\hat\lambda}}
\def\longto{\longrightarrow}
\def\Orb{{\mathcal O}}
\def\Li{{\mathcal L}}
\def\alc{{\mathcal A}}
\def\Lie{{\rm Lie}}
\def\Pic{{\rm Pic}}
\def\iI{{\mathcal I}}
\def\kmV{{\mathcal H}}
\def\kmL{{\mathcal L}}
\def\bl{\text{\cursive l}}
\newtheorem{lemma}{Lemma}
\newtheorem{prop}{Proposition}
\newtheorem{theo}{Theorem}
\newtheorem{coro}{Corollary}
\newenvironment{proof}{{\noindent\bf Proof.}}{\hfill $\square$}
\newenvironment{defin}{{\noindent\bf Definition.}}{}
\newenvironment{NB}{{\noindent\bf Remark.}}{}
\def\Chi{{\mathcal X}}
\def\Gra{{\mathcal{G}}}
\def\kmG{{\bf G}}
\def\bccup{\bigsqcup}
\def\Iw{{\mathcal B}}
\def\Waff{{\tilde W}}
\def\Eps{{\mathcal E}}
\def\ud{{\underline d}}
\def\Md{{\overline{M}_{0,3}(G/P,\ud)}}
\def\ev{{\rm ev}}
\def\mvar{{\mathbb X}}\def\monx{{\mathbb x}}\def\monz{{\mathbb
    z}}\def\mony{{\mathbb y}}
\def\parE{{\mathcal E}}
\def\tiota{{\tilde\iota}}
\def\pardeg{{\rm pardeg}}
\def\Lila{{\Li(\bl\Lambda)\otimes \Li(\lambda_1)\otimes \Li(\lambda_2)\otimes \Li(\lambda_3)}}
\def\Face{{\mathcal F}}
\def\poly{{\mathcal P}}
\def\cone{{\mathcal C}}
\begin{document}
\title{On the quantum Horn problem}

\author{N. Ressayre\footnote{Universit{\'e} Lyon 1 - 
43, bd du 11 novembre 1918 -
69622 Villeurbanne Cedex-
France - {\tt ressayre@math.univ-lyon1.fr}}}

\maketitle

\begin{abstract}
Let $K$ be a compact, connected, simply-connected simple Lie group.
Given two   conjugacy classes $\Orb_1$ and $\Orb_2$ in $K$, we
consider  the multiplicative Horn question:
  What conjugacy classes are contained in $\Orb_1\cdot\Orb_2$?
It is known that answering this question remains to describe a convex
polytope $\poly_K$. In 2003, Teleman-Woodward gave a complete list of
inequalities for $\poly_K$. Their list contains redundant
inequalities. In this paper, we describe $\poly_K$ by a smaller list
of inequalities.
\end{abstract}

\noindent
{\bf Warning.}
During the redaction of this paper, Belkale-Kumar independently obtained in
\cite{BK:qHorn} similar results.   Moreover, it is proved in
\cite{BK:qHorn} that the list of inequalities obtained in
\cite{BK:qHorn} and here is irredundant.

\section{Introduction}

\subsection{The additive Horn problem}

Let $K$ be a compact, connected, simply-connected simple Lie group and
let $\lk$ denote its Lie algebra.
Let $\Orb_1$ and $\Orb_2$ be two adjoint $K$-orbit in $\lk$. Then the
sum $\Orb_1+\Orb_2=\{\xi_1+\xi_2\,:\, \xi_1\in\Orb_1$ and $\xi_2\in
\Orb_2\}$ is $K$-stable. 
The so called Horn question is:

\begin{center}
  What adjoint $K$-orbits are contained in $\Orb_1+\Orb_2$?
\end{center}

\noindent{\bf Parametrization of adjoint orbits.}
Let $G$ denote the complexification of $K$.
Fix a maximal torus $T$ of $G$ such that $T_K:=T\cap K$ is a maximal
torus of $K$. 
Any root $\alpha$ of $(G,T)$ induces (by derivation) a linear form
(still denoted by $\alpha$) on
the Lie algebra $\Lie(T)$ of $T$.
The Lie algebra $\Lie(T_K)$ of $T_K$ identifies with the real Lie
subalgebra of $\xi\in \Lie(T)$ such that $\alpha(\xi)\in \sqrt{-1}\RR$
for any root $\alpha$.

Let $X_*(T)$ denote the group of one parameter subgroups of $T$.
It identifies with a sublattice of $\Lie(T)$. Moreover, the spanned
real vector space $X_*(T)_\RR:=X_*(T)\otimes \RR$ is the real Lie
subalgebra of $\xi\in \Lie(T)$ such that $\alpha(\xi)\in \RR$
for any root $\alpha$.

Choose a Borel subgroup $B$ of $G$ containing $T$.
Let $\Delta$ denote the associated set of simple roots.
The dominant chamber  in $X_*(T)_\RR$ is 
$$
X_*(T)_\RR^+=\{\tau\in X_*(T)_\RR\,:\,
    \langle\tau,\alpha\rangle\geq 0\quad\forall\alpha\in\Delta
\}.
$$
Any adjoint $K$-orbit in $\Lie(K)$ contains a unique element of the form
$\sqrt{-1}\tau$ for some $\tau\in X_*(T)_\RR^+$ 
; we denote by
$\Orb_\tau$ the adjoint $K$-orbit containing $\sqrt{-1}\tau$. \\

\noindent{\bf The Horn cone.}
Answering the Horn question is equivalent of describing
the set
$$
\poly_{\Lie(K)}=\{(\tau_1,\tau_2,\tau_3)\in X_*(T)_\RR^+)^3\,:\, \Orb_{\tau_1}+
\Orb_{\tau_2}+ \Orb_{\tau_3}\ni 0\}.
$$
According to Kirwan's convexity theorem
\cite{Ki}, $\poly_{\Lie(K)}$ is a convex polytope of nonempty interior 
in $X_*(T)_\RR$.
Belkale-Kumar \cite{BK} obtained an explicit list of
inequalities that characterize $\poly_{\Lie(K)}$.
Before stating their result, we introduce
notation on cohomology.

\subsection{The Belkale-Kumar cohomology}
\label{sec:BKcohom}

Let $W$ denote the
Weyl group $G$ and let $s_\alpha\in W$ denote the simple reflexion
associated to $\alpha\in\Delta$. The simple reflections $s_\alpha$
generated $W$ and determine a length function
$l$.

Let $P$ be a standard (that is containing $B$) parabolic subgroup of
$G$
and let $W_P$ denote its Weyl group. 
The set of minimal length
representative of $W/W_P$ is denoted by $W^P$. 
For any $w\in W^P$, let $X_w=\overline{BwP/P}\subset G/P$
denote the Schubert variety. 
The Poincaré dual class $\sigma_w\in H^{2(\dim(G/P)-l(w))}(G/P,\ZZ)$ 
of the homology class of $X_w$ is a Schubert class. 
Let $\sigma_w^\vee$ be the Poincar\'e dual class of $\sigma_w$.

Recall that $H^*(G/P,\ZZ)=\oplus_{w\in W^P}\ZZ\sigma_w$.
We define the structure constants
$c({w_1},{w_2},{w_3})$ associated to three
Schubert classes $\sigma_{w_1}$, $\sigma_{w_2}$ and $\sigma_{w_3}$ by
the identity
$$
\sigma_{w_1} \sigma_{w_2}=\sum
c({w_1},{w_2},{w_3})
\sigma_{w_3}^\vee,
$$
where the sum runs over $w_3\in W^P$.
The cohomology ring of $G/P$ is graded by $\deg(\sigma_w)=2(\dim(G/P)-l(w))$ for
any $w\in W^P$.
In particular, $c(w_1,w_2,w_3)\neq 0$ implies
that
\begin{eqnarray}
  \label{eq:deg1}
  l(w_1)+l(w_2)+l(w_3) =2\dim(G/P).
\end{eqnarray}
Let $\Phi^+$ denote the set of positive roots.  Let $R^u(P)$ denote
the unipotent radical of $P$ and let $\Phi(G/P)$ denote  the
set of roots of $R^u(P)$.
For $w\in W$, let $\Phi(w)=\{\alpha\in\Phi^+\,:\,-w\alpha\in\Phi^+\} $
be the inversion set. Recall that $w\in W^P$ if and only if
$\Phi(w)\subset\Phi(G/P)$.
Condition~\eqref{eq:deg1} can be rewritten like
\begin{eqnarray}
  \label{eq:deg2}
  \sharp \Phi(w_1)+\sharp\Phi(w_2)+\sharp \Phi(w_3)
=2\sharp \Phi(G/P).
\end{eqnarray}

Let $L$ be the
Levi subgroup of $P$ containing $T$ and let $Z$ be the neutral component of
the center of $L$.
For any character $\chi$ of $Z$ we set 
\begin{eqnarray}
  \label{eq:deg3}
\Phi(G/P,\chi)=\{\alpha\in \Phi(G/P)\,:\,\alpha_{|Z}=\chi\}.
  \end{eqnarray}

For $w\in W^P$ we also set $\Phi(w,\chi)=\Phi(w)\cap\Phi(G/P,\chi)$.
Now condition~\eqref{eq:deg3} is equivalent to
 \begin{eqnarray}
  \label{eq:deg4}
  \sum_{\chi\in X^*(Z)}\bigg (
  \sharp\Phi(w_1,\chi)+\sharp\Phi(w_2,\chi)+\sharp \Phi(w_3,\chi)
\bigg )
=2 \sum_{\chi\in X^*(Z)}\sharp \Phi(G/P,\chi).
\end{eqnarray}

The main theorem of \cite{BK} combined with \cite[Proposition~]{RR}
allow to obtain the following result.

\begin{theo}
 \label{th:BK}
 Let $(\tau_1,\tau_2,\tau_3)\in (X_*(T)^+_\RR)^3$. 
Then $(\tau_1,\tau_2,\tau_3)\in \poly_{\Lie(K)}$ if and only if 
$$
\langle w_1\varpi_\beta,\tau_1\rangle+
\langle w_2\varpi_\beta,\tau_2\rangle+
\langle w_3\varpi_\beta,\tau_3\rangle\leq 0,
$$
for any simple root $\beta$,
any nonnegative integer $d$ and any $(w_1,w_2,w_3)$ such that 
\begin{eqnarray}
  \label{eq:BKcond2}
c(w_1,w_2,w_3)=1,
\end{eqnarray}
and for any $\chi\in X^*(Z)$
\begin{eqnarray}
  \label{eq:BKfiltration}
  \sharp\Phi(w_1,\chi)+\sharp\Phi(w_2,\chi)+\sharp \Phi(w_3,\chi)
=2 \sharp \Phi(G/P,\chi).
\end{eqnarray}
\end{theo}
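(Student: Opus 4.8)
The plan is to derive the statement by combining the Belkale--Kumar description of $\poly_{\Lie(K)}$ in terms of their deformed product with the translation of the deformed structure constants into the explicit pair of conditions \eqref{eq:BKcond2} and \eqref{eq:BKfiltration}. First I would fix the set-up: since $\varpi_\beta$ and the quantifier over simple roots $\beta$ appear in the inequality, the relevant parabolic is the maximal standard parabolic $P=P_\beta$, whose Levi $L$ has centre $Z$ of rank one, so that $\Phi(G/P)$ is graded by the characters $\chi\in X^*(Z)$ according to the height in $\beta$. I note that the displayed inequality and condition \eqref{eq:BKcond2} do not involve the integer $d$, so in the present (additive) setting the quantifier over $d$ is the degree-zero specialization and plays no role; it is a placeholder inherited from the quantum statement. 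By Kirwan's theorem $\poly_{\Lie(K)}$ is a full-dimensional polytope, so it suffices to produce a list cutting out exactly its facets.

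Next I would recall the Belkale--Kumar product $\bkprod$ on $H^*(G/P,\ZZ)$. Its structure constants $c_0(w_1,w_2,w_3)$, defined by $\sigma_{w_1}\bkprod\sigma_{w_2}=\sum_{w_3}c_0(w_1,w_2,w_3)\,\sigma_{w_3}^\vee$, are obtained from the classical ones by the rule
\[
c_0(w_1,w_2,w_3)=
\begin{cases}
c(w_1,w_2,w_3) & \text{if \eqref{eq:BKfiltration} holds for all $\chi\in X^*(Z)$},\\
0 & \text{otherwise}.
\end{cases}
\]
Here the total balance \eqref{eq:deg4} is automatic from \eqref{eq:deg2}, but the per-character refinement \eqref{eq:BKfiltration} is a genuinely stronger requirement whenever $P_\beta$ is non-cominuscule, i.e. whenever $\beta$ occurs with coefficient $>1$ in the highest root. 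Consequently $c_0(w_1,w_2,w_3)=1$ holds if and only if both \eqref{eq:BKcond2} and \eqref{eq:BKfiltration} hold. This equivalence is, in substance, the content of the cited Proposition of \cite{RR}, which matches the deformed structure constants with the classical ones under the refined degree balance.

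I would then invoke the main theorem of \cite{BK}: as $\beta$ ranges over the simple roots and $(w_1,w_2,w_3)$ over triples in $(W^{P_\beta})^3$ with $c_0(w_1,w_2,w_3)=1$, the cone $\poly_{\Lie(K)}$ is cut out by the inequalities
\[
\langle w_1\varpi_\beta,\tau_1\rangle+\langle w_2\varpi_\beta,\tau_2\rangle+\langle w_3\varpi_\beta,\tau_3\rangle\leq 0 .
\]
Substituting the equivalence of the previous paragraph, the indexing condition $c_0(w_1,w_2,w_3)=1$ becomes precisely the conjunction of \eqref{eq:BKcond2} and \eqref{eq:BKfiltration}, and the list of inequalities becomes exactly the one in the statement. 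This proves both implications of the theorem at once.

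The genuinely delicate step is the second one. There are two points that need care. The first is that the deformed constant may be taken equal to $1$ rather than merely nonzero: a facet of the eigencone forces the corresponding intersection multiplicity to be exactly $1$, which is where the geometric (GIT / transversality) input of \cite{RR} is essential, via a dimension count on generic translates of Schubert varieties in $G/P_\beta$ carrying the $Z$-filtration. The second is verifying that the correct refinement of the total degree condition \eqref{eq:deg2} is the per-character condition \eqref{eq:BKfiltration} and not \eqref{eq:deg2} itself; this is exactly what distinguishes $\bkprod$ from the ordinary cup product and is the reason the $X^*(Z)$-grading must be tracked level by level. Once these are in hand, the remainder of the argument is the purely formal substitution described above.
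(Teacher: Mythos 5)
Your proposal is correct and coincides with the paper's own treatment: the paper gives no independent proof of Theorem~\ref{th:BK}, stating only that it follows from the main theorem of \cite{BK} (the eigencone cut out by inequalities indexed by triples whose Belkale--Kumar deformed structure constant equals $1$) combined with the proposition of \cite{RR} identifying $c_0(w_1,w_2,w_3)=1$ with the conjunction of \eqref{eq:BKcond2} and the per-character condition \eqref{eq:BKfiltration} --- exactly the substitution you carry out. The only slip is one of attribution in your closing discussion: the multiplicity-one normalization (taking $c_0=1$ rather than $c_0\neq 0$) is already part of Belkale--Kumar's theorem, whereas the contribution of \cite{RR} is the numerical, $X^*(Z)$-graded characterization of Levi-movability; this does not affect the validity of the argument.
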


\subsection{The multiplicative Horn problem}

\noindent{\bf The multiplicative Horn question.}
Let $\Orb_1$ and $\Orb_2$ be two conjugacy classes in $K$. Then the
product $\Orb_1\cdot\Orb_2=\{k_1k_2\,:\, k_1\in\Orb_1$ and $k_2\in
\Orb_2\}$ is stable by conjugacy. 
This article is concerned by the multiplicative Horn question:

\begin{center}
  What conjugacy classes are contained in $\Orb_1\cdot\Orb_2$?
\end{center}

\noindent{\bf Parametrization of the conjugacy classes.}
Let $\theta$ be the longest root of $G$ relatively to $T\subset B$.
The fundamental alcove in $X_*(T)_\RR$ is 
$$
\alc_*=\{\tau\in X_*(T)_\RR\,:\,\left\{
  \begin{array}{l}
    \langle\tau,\alpha\rangle\geq 0\quad\forall\alpha\in\Delta\\
\langle\tau,\theta\rangle\leq 1
  \end{array}
\right .
\}.
$$
Consider the exponential map 
$$
\begin{array}{cccl}
  \exp\::&\Lie(T_K)&\longto&T_K\\
&\mu&\longmapsto&\exp(\mu).
\end{array}
$$
Any conjugacy class in $ K$ contains a unique element of the form
$\exp(\sqrt{-1}\tau)$ for some $\tau\in \alc_*$ (see e.g.\ \cite[Chapter
IX. \S 5]{Bourb7-9}); we denote by
$\Orb_\tau$ the conjugacy class containing $\tau$. \\

\noindent{\bf The multiplicative Horn polytope.}
Answering the multiplicative Horn question is equivalent of describing
the set
$$
\poly_K=\{(\tau_1,\tau_2,\tau_3)\in \alc_*^3\,:\, \Orb_{\tau_1}\cdot
\Orb_{\tau_2}\cdot \Orb_{\tau_3}\ni e\},
$$
where $e$ is the unit element of $K$.
According to the convexity theorem proved by Meinrenken-Woodward
\cite{MW}, $\Delta$ is a convex polytope of nonempty interior 
in $\alc$.
Teleman-Woodward \cite{TW:parab} obtained an explicit list of
inequalities that characterize $\poly_K$.
The aim of this article is to determine a smaler list of inequalities
that still characterize the polytope.
Before stating Teleman-Woodward's theorem, we introduce
notation on quantum cohomology.

\subsection{Quantum cohomology of $G/P$}

Let $\Delta_P$ be the set of simple roots of $(L,T)$.
The Picard group $\Pic(G/P)$ identifies
with
$H^2(G/P,\ZZ)=\oplus_{\alpha\in\Delta-\Delta_P}\ZZ\sigma_{s_\alpha}$.
 We denote by $(\sigma_{s_\beta}^*)_{\beta\in\Delta-\Delta_P}$
 the $\ZZ$-basis of $\Hom( H^2(G/P,\ZZ),\ZZ)$ dual of the
 $(\sigma_{s_\beta})_{\beta\in \Delta-\Delta_P}$.

Let $\gamma\,:\,\PP^1\longto G/P$ be a curve. Identifying
the group $\Pic(\PP^1)$ to $\ZZ$ (by mapping ample line bundles on positive integers), the pullback of line bundles induces an
element of  $\Hom( H^2(G/P,\ZZ),\ZZ)$ called the degree of $\gamma$
and denoted by $\ud(\gamma)$.
 By construction $\ud(\gamma)\in\sum_{\beta\in\Delta-\Delta_P}\ZZ_{\geq 0}\sigma_{s_\beta}^*$.

Let $\rho$ and $\rho^L$ denote the half sum of positive roots of $G$
and $L$ respectively. For any $\beta\in\Delta-\Delta_P$, set 
\begin{eqnarray}
  \label{eq:defnbeta}
n_\beta=\langle\beta^\vee,2(\rho-\rho^L)\rangle,
\end{eqnarray}
where $\beta^\vee$ is the simple coroot. 
Fix $\ud=\sum_{\beta\in\Delta-\Delta_P}d_\beta\sigma_{s_\beta}^*$
for some $d_\beta\in \ZZ_{\geq 0}$.
Let $\Md$ be the moduli space of stable maps of degree $\ud$ with 3
marked points into $G/P$. It is a projective variety of dimension
$$
\dim(\Md)=\dim(G/P)+\sum_{\beta\in\Delta-\Delta_P} d_\beta n_\beta.
$$
It comes equipped with 3 evaluation maps $\ev_i\,:\,\Md\longto G/P$.
The Gromov-Witten invariant associated to three Schubert classes
(corresponding to $w_i\in W^P$) and a
degree $\ud$ is then the intersection number
$$
GW({w_1},{w_2},{w_3};\ud)=\int_\Md
ev_1^*(\sigma_{w_1})\cdot
ev_2^*(\sigma_{w_2})\cdot
ev_3^*(\sigma_{w_3}).
$$

For any $\alpha\in\Delta-\Delta_P$, we introduce a  variable
$q_\alpha$.
Consider the group
$$
\begin{array}{r@{\,}l}
QH^*(G/P,\ZZ):=&H^*(G/P,\ZZ)\otimes\ZZ[q_\beta\ \,:\,\beta
\in\Delta-\Delta_P]\\=&
\bigoplus_{w\in W^P}\ZZ[q_\beta\ \,:\,\beta
\in\Delta-\Delta_P]\sigma_w.
\end{array}
$$
The $\ZZ[q_\beta\ \,:\,\beta
\in\Delta-\Delta_P] $-linear quantum product $\star$ on $QH^*(G/P,\ZZ)$
is defined by, for any $w_1,w_2\in  W^P$, 
$$
\sigma_{w_1}\star \sigma_{w_2}=\sum
GW({w_1},{w_2},{w_3};\ud)
q^\ud\sigma_{w_3}^\vee,
$$
where the sum runs over $w_3\in W^P$ and over 
$\ud\in\sum_{\beta\in\Delta-\Delta_P}\ZZ_{\geq 0}\sigma_{s_\beta}^*$.
Here, if $\ud=\sum_{\beta\in\Delta-\Delta_P}d_\beta\sigma_{s_\beta}^*$
then $q^\ud=\prod_{\beta\in\Delta-\Delta_P}q_\beta^{d_\beta}$.

\subsection{Teleman-Woodward inequalities}

Fix for a moment a simple root $\beta$, the corresponding maximal
standard parabolic subgroup $P_\beta$ and the fundamental weight 
$\varpi_\beta$. 
Let $w_1$, $w_2$, and $w_3$ in $W^{P_\beta}$.
A degree for curves in $G/P_\beta$ is a nonnegative integer $d$. 
Consider the following linear inequality on points
$(\tau_1,\tau_2,\tau_3)$ in $X_*(T)\otimes \RR$:
$$
\iI_\beta(w_1,w_2,w_3;d)\qquad \langle w_1\varpi_\beta,\tau_1\rangle+
\langle w_2\varpi_\beta,\tau_2\rangle+
\langle w_3\varpi_\beta,\tau_3\rangle\leq d.
$$

We can now state Teleman-Woodward's theorem (see \cite{TW:parab}).

\begin{theo}[Teleman-Woodward (see \cite{TW:parab})]
 \label{th:TW}
 Let $(\tau_1,\tau_2,\tau_3)\in\alc_*^3$. 
Then $(\tau_1,\tau_2,\tau_3)\in \poly_K$ if and only if inequality
$\iI_\beta(w_1,w_2,w_3;d)$ is fulfilled for any simple root $\beta$,
any nonnegative integer $d$ and any $(w_1,w_2,w_3)$ such that 
\begin{eqnarray}
  \label{eq:TWcond}
GW(w_1,w_2,w_3;d\sigma_{s_\beta}^*)=1
\end{eqnarray}
in $G/P_\beta$.
\end{theo}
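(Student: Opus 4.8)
The plan is to realize $\poly_K$ as the moment polytope of a loop-group action, in direct parallel with the way $\poly_{\Lie(K)}$ in Theorem~\ref{th:BK} is the moment polytope of the diagonal $K$-action on a product of coadjoint orbits. By the convexity theorem of Meinrenken-Woodward \cite{MW}, the condition $e\in\Orb_{\tau_1}\cdot\Orb_{\tau_2}\cdot\Orb_{\tau_3}$ is equivalent to the nonemptiness of a symplectically reduced space, namely the moduli of flat $K$-connections on the three-punctured sphere with holonomies in the prescribed conjugacy classes. First I would translate this, via Mehta-Seshadri and the ``quantization commutes with reduction'' principle, into the nonvanishing of a space of conformal blocks at some level, whose dimension is a Verlinde number. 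This reduces the membership question to an algebraic nonvanishing question, just as in the additive case the nonvanishing of a tensor-product multiplicity controls $\poly_{\Lie(K)}$. The genus-zero, three-marked-point geometry here is exactly what produces the moduli space $\Md$ of stable maps.

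Next I would extract the candidate facet inequalities by a Hilbert-Mumford analysis in the affine setting. Each wall is produced by a one-parameter subgroup of the loop group: its finite part selects a standard maximal parabolic $P_\beta$ together with its fundamental weight $\varpi_\beta$, while the loop-rotation part contributes the nonnegative integer $d$. Computing the Hilbert-Mumford weight of this one-parameter subgroup against the three orbits yields precisely the linear form appearing in $\iI_\beta(w_1,w_2,w_3;d)$, with the elements $w_i\in W^{P_\beta}$ recording the relative positions of the limiting bundles and the right-hand side $d$ arising from the degree. Note that the appearance of $d$ on the right, in contrast to the $0$ in Theorem~\ref{th:BK}, is exactly the quantum correction coming from loop rotation. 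At this stage one obtains a Berenstein-Sjamaar type list of inequalities indexed by triples $(w_1,w_2,w_3)$ and degrees $d$ for which the relevant Gromov-Witten number $GW(w_1,w_2,w_3;d\sigma_{s_\beta}^*)$ is merely nonzero.

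The decisive step is to cut this list down to the triples with $GW(w_1,w_2,w_3;d\sigma_{s_\beta}^*)=1$. Here I would invoke the quantum analogue of Belkale-Kumar's Levi-movability \cite{BK} together with Ressayre's characterization of the facets of a GIT moment cone \cite{RR}: a one-parameter subgroup determines a genuine face of $\poly_K$ only when the corresponding pair is well-covering, and on the level of the three-pointed intersection problem on $\Md$ this forces the Gromov-Witten number to equal exactly $1$ rather than an arbitrary positive integer. This is the multiplicative counterpart of the condition $c(w_1,w_2,w_3)=1$ of Theorem~\ref{th:BK}, and the supporting dimension count is the degree condition $\dim(\Md)=\dim(G/P_\beta)+d\,n_\beta$.

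I expect the main obstacle to be the tangent-space computation on $\Md$ that underlies the well-covering property. One must show that, for facet-producing data, the intersection of the three pulled-back Schubert conditions under $\ev_1,\ev_2,\ev_3$ is transverse and set-theoretically a single reduced point, so that the invariant is $1$. Controlling the deformation theory of stable maps of degree $d\sigma_{s_\beta}^*$ — in particular ensuring that the loop-rotation direction interacts correctly with the infinitesimal automorphisms of the domain curve — is the technically delicate part, and it is precisely where the quantum argument departs from the classical Belkale-Kumar analysis of the filtered tangent space of $G/P_\beta$.
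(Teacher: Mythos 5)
Your first two steps (membership as nonvanishing of conformal blocks, then a Hilbert--Mumford analysis producing the inequalities with $GW\neq 0$) run parallel to the paper, which encodes the problem as numerical semistability for $L_{alg}^{<0}G$ acting on $\mvar=\Gra\times(G/B)^3$ (Lemma~\ref{lem:vacuasection}, Lemma~\ref{lem:ineqsat}). The gap is in your ``decisive step''. To cut the list down to $GW=1$ you invoke Ressayre's well-covering/facet machinery and a ``quantum analogue of Levi-movability''. That machinery is established for a reductive group acting on a finite-dimensional projective variety; here the acting group is the non-reductive ind-group $L_{alg}^{<0}G$ and the space is an ind-variety, and transporting the well-covering formalism to this setting is precisely the hard content that must be built by hand (it is what this paper and Belkale--Kumar's quantum Horn paper actually do). Worse, the appeal inverts the logical structure of the two results: the Levi-movability-type condition~\eqref{eq:filtration} is the \emph{extra} refinement of Theorem~\ref{th:main}, strictly finer than the Teleman--Woodward condition $GW=1$, and in the paper its proof (the weight-space dimension count \eqref{eq:dim10}--\eqref{eq:dim12}) is carried out \emph{after} and on top of the $GW=1$ statement, not as its source. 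So your outline proves the weaker theorem by citing the stronger one, whose proof is the open problem at this stage.

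The multiplicity-one statement has a more elementary origin, and this is how the paper reproves Theorem~\ref{th:TW} (Lemma~\ref{lem:TW}). If the point lies outside the cone, every point of $\mvar$ is unstable; on the open Harder--Narasimhan stratum $\mvar^\circ(\Li)$ (Proposition~\ref{prop:openstratum}) the destabilizing data is \emph{unique} -- this is the Kempf-type uniqueness of the canonical reduction, Proposition~\ref{prop:unicitytau}, which rests on the convexity of $\mu_\eta$ (Proposition~\ref{prop:muetaconvex}), valid because the $\lambda_i/\bl$ lie in the alcove. Uniqueness forces the fiber of the sweeping map $\eta\,:\,L_{alg}^{<0}G\times_{L_{alg}^{<0}P}C^+\longto\mvar$ over any point of the open stratum to be a single point, while Kleiman's theorem identifies the generic fiber cardinality with $GW(w_1,w_2,w_3;d\sigma_{s_\beta}^*)$ (Proposition~\ref{prop:etaGW}); hence $GW=1$ and the corresponding inequality is violated. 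Note that Proposition~\ref{prop:etaGW} works with $\Mor(\PP^1,G/P_\beta,\ud)$, maps from a \emph{fixed} $\PP^1$, so your predicted main obstacle -- deformation theory of stable maps on $\Md$ and automorphisms of the domain curve -- never arises; the genuine technical difficulties are the convexity of $\mu_\eta$ for alcove weights and the openness of the HN stratum, neither of which your outline addresses.
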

  
\subsection{Our main result}

Our main result is a raffinement of the
condition~\eqref{eq:TWcond}.

Here, $P$ is any standard parabolic subgroup of $G$. 
The grading on $H^*(G/P,\ZZ)$ extends to the quantum setting by setting
$\deg(q_\beta)=2n_\beta$ for any $\beta\in\Delta-\Delta_P$.
In particular, for $\ud=\sum_{\beta\in\Delta-\Delta_P}d_\beta\sigma_{s_\beta}^*$,
$GW(w_1,w_2,w_3;\ud)\neq 0$ implies
that
\begin{eqnarray}
  \label{eq:qdeg1}
  l(w_1)+l(w_2)+l(w_3) +\sum_{\beta\in\Delta-\Delta_P}d_\beta n_\beta=2\dim(G/P).
\end{eqnarray}
Condition~\eqref{eq:qdeg1} can be rewritten like
\begin{eqnarray}
  \label{eq:qdeg2}
  \sharp \Phi(w_1)+\sharp\Phi(w_2)+\sharp \Phi(w_3)+
\sum_{\beta\in\Delta-\Delta_P}d_\beta n_\beta
=2\sharp \Phi(G/P).
\end{eqnarray}
Set $h=\sum_{\beta\in\Delta-\Delta_P}d_\beta\beta^\vee$.
Since $2(\rho-\rho_L)=\sum_{\alpha\in\Phi(G/P)}\alpha$, condition~\eqref{eq:qdeg2} can be rewritten like
\begin{eqnarray}
  \label{eq:qdeg3}
  \sharp\Phi(w_1)+\sharp\Phi(w_2)+\sharp \Phi(w_3)+
\sum_{\alpha\in \Phi(G/P)}\langle h,\alpha\rangle
=2\sharp \Phi(G/P),
\end{eqnarray}
or like
 \begin{eqnarray}
  \label{eq:qdeg4}
  \sum_{\chi\in X^*(Z)}\bigg (
\sum_{i=1}^3 \sharp\Phi(w_i,\chi)
+
\sum_{\alpha\in \Phi(G/P,\chi)}\langle h,\alpha\rangle\bigg )
=2 \sum_{\chi\in X^*(Z)}\sharp \Phi(G/P,\chi).
\end{eqnarray}

\begin{theo}
 \label{th:main}
 Let $(\tau_1,\tau_2,\tau_3)\in\alc_*^3$. 
Then $(\tau_1,\tau_2,\tau_3)\in\Delta(K)$ if and only if inequality
$\iI_\beta(w_1,w_2,w_3;d)$ is fulfilled for any simple root $\beta$,
any nonnegative integer $d$ and any $(w_1,w_2,w_3)$ such that, in
$QH^*(G/P_\beta)$, 
\begin{eqnarray}
  \label{eq:TWcond2}
GW(w_1,w_2,w_3;d\sigma_{s_\beta}^*)=1,
\end{eqnarray}
and for any $\chi\in X^*(Z)$
\begin{eqnarray}
  \label{eq:filtration}
  \sharp\Phi(w_1,\chi)+\sharp\Phi(w_2,\chi)+\sharp \Phi(w_3,\chi)+
\sum_{\alpha\in \Phi(G/P,\chi)}d\langle \beta^\vee,\alpha\rangle
=2 \sharp \Phi(G/P,\chi).
\end{eqnarray}
\end{theo}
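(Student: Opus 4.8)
The plan is to transport to the quantum/multiplicative setting the strategy that produced the additive refinement of Theorem~\ref{th:BK}, using Theorem~\ref{th:TW} as the starting (redundant) list and Geometric Invariant Theory to detect which inequalities are essential. One implication is free: the refined list is a sub-list of the Teleman--Woodward list, since condition~\eqref{eq:filtration} is imposed \emph{in addition} to $GW(w_1,w_2,w_3;d\sigma_{s_\beta}^*)=1$. Hence if $(\tau_1,\tau_2,\tau_3)\in\poly_K$ then, by Theorem~\ref{th:TW}, all inequalities $\iI_\beta(w_1,w_2,w_3;d)$ of the refined list hold. The entire content of the theorem is the converse: that these inequalities already cut out $\poly_K$, equivalently that every triple with $GW=1$ violating~\eqref{eq:filtration} yields a \emph{redundant} inequality.

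To establish the converse I would realize $\poly_K$ as the eigenvalue (moment) polytope of the GIT problem underlying Teleman--Woodward's proof, in which intersection numbers on the moduli space $\Md$ of stable maps are computed by the quantum product on $QH^*(G/P_\beta)$. Each inequality $\iI_\beta(w_1,w_2,w_3;d)$ then comes from a pair consisting of a dominant one-parameter subgroup (encoded by the simple root $\beta$ and the degree $d$) and a Schubert-type irreducible component of the associated fixed-point locus (encoded by $(w_1,w_2,w_3)$). In this dictionary the hypothesis $GW(w_1,w_2,w_3;d\sigma_{s_\beta}^*)=1$ is precisely the well-covering (multiplicity-one) condition that makes the inequality valid and tight on $\poly_K$.

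I would then invoke the general facet criterion for such polytopes: a well-covering pair defines a \emph{facet} of $\poly_K$ if and only if it is \emph{dominant}, a condition of $L$-movability of the tangent space to $\Md$ along the fixed component, where $L$ is the Levi subgroup of $P_\beta$ and $Z$ its connected center. As in Belkale--Kumar, dominance is detected weight-by-weight for the $Z$-action: the pair is dominant exactly when, for each $\chi\in X^*(Z)$, the $\chi$-graded part of the tangent space to $\Md$ decomposes into tangent and normal contributions of equal dimension. Consequently a triple with $GW=1$ that is \emph{not} dominant cuts out a face of codimension at least two, which contributes no new facet, so its inequality is implied by those of the refined list.

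The crux --- and the step I expect to be the main obstacle --- is the identification of this $Z$-weighted dominance condition with the explicit equality~\eqref{eq:filtration}. Unlike the additive case, where the tangent space is that of a product of flag varieties, here one must use the deformation theory of stable maps and decompose $H^0(\PP^1,\gamma^*T(G/P_\beta))$ (and the obstruction $H^1$) according to the $Z$-weights. Pulling back the $\chi$-graded summand of $T(G/P_\beta)$ along a degree-$d$ curve yields line bundles on $\PP^1$ whose degrees sum to $\sum_{\alpha\in\Phi(G/P_\beta,\chi)}d\langle\beta^\vee,\alpha\rangle$; this is exactly the quantum correction appearing in~\eqref{eq:filtration}, refining the global degree relation~\eqref{eq:qdeg4} character by character. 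Once this weight-by-weight Euler-characteristic computation is carried out, dominance becomes equivalent to~\eqref{eq:filtration}, the redundancy of the remaining Teleman--Woodward inequalities follows, and the characterization of $\poly_K$ is complete.
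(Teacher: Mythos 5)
Your necessity direction is fine (the refined list is a sublist of Teleman--Woodward's, so Theorem~\ref{th:TW} gives it immediately), and your instinct that the refinement~\eqref{eq:filtration} should come from a per-$\chi$ dimension count whose quantum correction is $\sum_{\alpha\in\Phi(G/P,\chi)}d\langle\beta^\vee,\alpha\rangle$ is exactly right. But the logical skeleton of your converse has a genuine gap: you invoke, as known, a ``general facet criterion'' asserting that a $GW=1$ pair defines a facet of $\poly_K$ if and only if it is dominant, and you derive the theorem from its contrapositive (non-dominant $\Rightarrow$ codimension $\geq 2$ $\Rightarrow$ redundant). No such criterion exists off the shelf in this setting. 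The GIT problem behind $\poly_K$ is the action of the ind-group $L^{<0}_{alg}G$ on $\mvar=\Gra\times(G/B)^3$ (equivalently, semistability of flagged principal bundles on $\PP^1$), an infinite-dimensional situation to which the finite-dimensional theory of well-covering and dominant pairs --- which is what produces facet criteria for the \emph{additive} Horn problem --- does not directly apply; indeed, the paper explicitly leaves the facet (irredundancy) statement as a conjecture, later proved by Belkale--Kumar. Worse, the direction you need carries essentially the whole weight of the theorem: granting it, the conclusion follows in three lines from Theorem~\ref{th:TW} plus the fact that a full-dimensional polytope is the intersection of its facet-defining half-spaces. So ``invoking'' the criterion is where all the work lies, and nothing in your sketch constructs it; the crux identification of dominance with~\eqref{eq:filtration} via deformation theory of stable maps is also left as a sketch.

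The paper's proof avoids facets entirely and is direct. After translating the problem into the cone $\cone^{\rm nss}(\mvar)$ via the fusion product (Lemma~\ref{lem:vacuasection} and Theorem~\ref{th:eigen=fusion}), it argues: if a point is \emph{not} in the cone, then every point of $\mvar$ is unstable, and the open stratum of the Harder--Narasimhan-type stratification (Proposition~\ref{prop:openstratum}), together with uniqueness of the canonical reduction (Proposition~\ref{prop:unicitytau}), produces one destabilizing datum $(\tau_0,P,h,w_1,w_2,w_3)$ for which the map $\eta$ is generically injective; this forces $GW(w_1,w_2,w_3;\ud)=1$ and exhibits a violated inequality (Lemma~\ref{lem:TW}). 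The new content --- that this same datum satisfies~\eqref{eq:filtrationGIT} --- is then obtained not from the deformation theory of $\Md$ but from an explicit count on the special fibers of $\eta$ (Proposition~\ref{prop:fibrespe}): such a fiber is a space of $\Lie(R^u(P^-))$-valued polynomial maps $\mcM^\circ_h$ with degree bounds given by the Peterson--Woodward representative $h_{PW}$ (Lemma~\ref{lem:PW}), cut by linear incidence conditions at $p_1,p_2,p_3$, and it splits as a direct sum over $\chi\in X^*(Z)$. Since the fiber is a single point while the total expected dimension is zero (because $GW=1$), each $\chi$-graded codimension inequality~\eqref{eq:dim12} must be an equality, and these equalities are precisely~\eqref{eq:filtrationGIT}. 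Your weight-by-weight Euler-characteristic computation of $\gamma^*T(G/P_\beta)$ is a reasonable proxy for this count, but in your architecture it hangs on an unproven (and, in this setting, unprovable-by-citation) facet criterion, whereas in the paper the per-$\chi$ equality is forced by the zero-dimensionality of a concretely described fiber.
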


\subsection{Comparaison with Teleman-Woodward theorem}

We made some explicit computations using Anders Buch's  qcalc Maple
package, SageMath and Normaliz. The used programs, some files
containing explicit list of inequalities and additional
computation are available on author's webpage (see~\cite{MaPage}).
  
Here, we give some quantitative aspects for the group $G_2$ and the
groups of type $B$, $C$ or $D$ up to rank 6. 
More precisely, in the two last column of
Table~\ref{fig:comp}  appear the numbers of vertices and facets of
the polytope $\poly_K$. 
In  column ``MAX'', all the inequalities corresponding to nonzero
GW-invariants are counted.  In  column ``TW'', the number of
inequalities given by Theorem~\ref{th:TW} is given.
The inequalities obtained by combining Theorems~\ref{th:BK} and
\ref{th:TW} are counted.
In column ``Th~\ref{th:main}'', only the inequalities given by
Theorem~\ref{th:main} are counted. 
 All these numbers of inequalities include the $3*($rank$+1)$ inequalities of
dominancy and alcove. 
One can observe that Theorem~\ref{th:main} gives a list of
inequalities significantly smaller than the combination of
Theorems~\ref{th:BK}
and \ref{th:TW}.

\begin{figure}
\label{fig:comp}
  \centering
 
 $
\begin{array}{|r||r|r|r|r||r|r|}
\hline
{\rm Group}&\centering {\rm  MAX}
&\centering {\rm TW}& {\rm TWBK}&{\rm  Th~\ref{th:main}} & {\rm  Vertices} & {\rm
  Facets}\\
\hline
G_2&103 &82 &79 &48 &30&48\\
\hline
Sp(4)&43& 42& 41& 38&13&38\\
\hline
Sp(6)&363&329&296&200& 66&200\\
\hline
Sp(8)&4\,679& 3\, 604& 3\, 130&1\, 204&444&1\, 204\\
\hline
Sp(10)&75\, 665& 44\, 211& 38\, 795& 7\, 310&3\, 162&7\, 310\\
\hline  
Sp(12)&1\,422\,545& 556\,383& 500\,130& 43\,136&20\,839 &43\,136\\
\hline
Spin(7)&378& 322& 289&191&65&191\\
\hline
Spin(8)&1\, 434&1\, 347& 1\,164& 771&137&771\\
\hline
Spin(9)&4\, 940& 3\, 231& 2\, 748& 1\, 046&385& 1\, 046\\
\hline
Spin(10)&35\, 590&27\, 814& 23\, 050& 6\, 538&1\, 296&6\, 538\\
\hline
Spin(11)&79\, 813& 34\, 152& 28\, 636& 5\, 734&2\, 236&5\, 734\\
\hline
Spin(12)&889\,751& 485\,229&407\,856& 47\,141&?&?\\
\hline
Spin(13)&1\,499\,669&356\,942& 300\,776& 30\,753&12\,269 
&30\, 753\\
\hline 
\end{array}
$
 \caption{Explicit computations}
\end{figure}

It is worthy to observe that in any computed examples the number of facets
is equal to the number of inequalities given by our main result. 
One can conjecture that the list of inequalities given by
Theorem~\ref{th:main} is irredundant. The analogous result for the
additive Horn problem is proved in \cite{GITEigen}. 


\section{Notation}

In this section, we reintroduce more carefully and complete the
notation used in the introduction.

\subsection{Notation on the group $G$}

Let $G$ be a simple simply connected Lie group and $Z(G)$ its center. 
Set $G_{ad}=G/Z(G)$ and $T_{ad}=T/Z(G)$.
We fix a Borel subgroup $B$ of $G$ and a maximal torus $T$ contained
in $B$.
Let $\Phi$ and $\Phi^+$ denote the sets of roots and positive roots
respectively.
It $\alpha$ belongs to $\Phi$, $\alpha^\vee$ denote the corresponding
coroot.
The set of simple roots is denoted by $\Delta$.
For $\alpha\in \Delta$, $\varpi_\alpha\in X^*(T)$ denotes the
corresponding fundamental weight and $\varpi_{\alpha^\vee}\in X_*(T_{ad})$ denotes the
associated fundamental coweight.
Let $\rho$ be the half sum of the positive roots. Recall that
$\rho=\sum_{\alpha\in\Delta}\varpi_\alpha$.

Note that $X^*(T)=\oplus_{\alpha\in\Delta}\ZZ\varpi_\alpha$.
 Let $Q=\oplus_{\alpha\in\Delta}\ZZ\alpha\in X^*(T)$ denote the
 root latice. 
Similarly 
$X_*(T)=\oplus_{\alpha\in\Delta}\ZZ\alpha^\vee$ and 
$P^\vee:=\oplus_{\alpha\in\Delta}\ZZ\varpi_{\alpha^\vee}$.
Let $W$ be the Weyl group and $w_0$ be its longest element.
Let $h\in X_*(T)$. Write
$h=\sum_{\alpha\in\Delta}n_\alpha\alpha^\vee$. Note that
$\langle\rho,h\rangle= \sum_{\alpha\in\Delta}n_\alpha$.
The dominance order on $X_*(T)$ is the partial order $\geq$ defined by 
$$
h\geq h'\ \iff\ h-h'\in\sum_{\alpha\in\delta}\ZZ_{\geq 0}\alpha^\vee.
$$

Let $\theta$ denote the longest root.
Set $X^*(T)_\RR=X^*(T)\otimes \RR$ and its dual space $X_*(T)_\RR=X_*(T)\otimes \RR$.
 There exists a $W$-invariant Euclidean scalar
product $(\ ,\ )$ on $X^*(T)_\RR$. Moreover, it is unique modulo
positive scalar. We fix a choice by assuming that
$(\theta ,\theta )=2$.
Using $(\ ,\ )$, we identify $X^*(T)_\RR$ with $X_*(T)_\RR$.
The transition relations are, for any $\alpha\in \Delta$
$$
\alpha^\vee=(\frac 2 {(\alpha,\alpha)}\alpha,\square ),\quad
\varpi_{\alpha^\vee}=(\frac 2 {(\alpha,\alpha)}\varpi_\alpha,\square ).
$$
Observe that $\frac 2 {(\alpha,\alpha)}=1, 2$ or $3$, depending if
$\alpha$ is not short, short in type $\neq G_2$ and short in type
$G_2$.
In particular 
$X^*(T)\supset Q\supset X_*(T)\subset P^\vee\subset X^*(T)$.

A one parameter subgroup $\tau$ of $T$ is said to be dominant if
$\langle\tau,\alpha\rangle\geq 0$ for any $\alpha\in\Delta$. The set
of dominant one parameter subgroups is denoted by
$X_*(T)^+$. Similarly; $\lambda\in X^*(T)$ is dominant, or belongs to
$X^*(T)^+$ if $\langle\lambda,\alpha^\vee\rangle\geq 0$. We extend
these definitions and notations to $X^*(T)_\RR$ and $X_*(T)_\RR$.

For $\tau\in X_*(T)$, we denote by $P(\tau)$ the set $g\in G$ such
that $\tau(t)g\tau(t^{-1})$ has a limit in $G$ when $t$ goes to
$0$. It is a parabolic subgroup of $G$. It contains $B$ if and only if
$\tau$ is dominant.

\subsection{The affine Kac-Moody Lie algebra}

Endow $\hat \kmL\lg=\lg\otimes \CC((z))\oplus\CC c\oplus
\CC d$ with the usual Lie bracket (see \eg \cite[Chap XIII]{Kumar:KacMoody}).
Set $\hat\lh=\Lie(T)\oplus\CC c\oplus\CC d$.
We identify $\Lie(T)^*$ with the orthogonal of $\CC c\oplus\CC d$ in
$\hat\lh$.
Define $\Lambda$ and $\delta$ in $\hat\lh^*$ by
$$
\begin{array}{l}
\delta\,:\, \lh\longmapsto 0, c\longmapsto 0,
d \longmapsto 1;\\
  \Lambda\,:\, \lh\longmapsto 0, c\longmapsto 1,
d \longmapsto 0.
\end{array}
$$
The simple roots of $\hat \kmL\lg$ are 
$$
\alpha_0=\delta-\theta,\alpha_1,\dots,\alpha_l.
$$
For any fundamental weight $\varpi$ of $\lg$, set
$\hat\varpi=\varpi+\varpi(\theta^\vee)\Lambda\in\hat\lh^*$.
Fix a numbering $\alpha_1,\dots,\alpha_l$ of the simple roots of
$\lg$. Set $\hat\varpi_0=\Lambda$.
The fundamental weights of $\hat \kmL\lg$ are
$\hat\varpi_0,\,\hat\varpi_1,\dots,\hat\varpi_l$.
Set
$$
\hat\lh_\ZZ^*=\ZZ \hat\varpi_0\oplus\cdots\oplus
\ZZ \hat\varpi_l\oplus\ZZ\delta,
$$
and
$$
\hat\lh_\ZZ^{*+}=\ZZ_{\geq 0}\varpi_0\oplus\cdots\oplus
\ZZ_{\geq 0}\varpi_l\oplus\ZZ\delta.
$$
Fix $\hat \lambda=\lambda+\bl\Lambda+z\delta\in \hat\lh_\ZZ^*$ with
$\lambda\in\lh^*$, $\bl\in\ZZ_{\geq 0}$ and $z\in\ZZ$.
If $\hat \lambda\in \hat\lh_\ZZ^{*+}$, that is, if $\langle
\lambda,\theta^\vee\rangle\leq \bl$,
 then there exists a simple $\hat \kmL\lg$-module $\kmV(\hlambda)$ of
highest weight $\hlambda$.
 The subspace of $\kmV(\lambda+\bl \Lambda)$ annihilated by
 $\lg\otimes z\CC[z]$ is isomorphic as a $\lg$-module to
 $V(\lambda)$. 

\label{sec:Waff}
Let $s_0,s_1,\dots,s_l$ be the set of simple reflections. They
generate the  affine Weyl group $\Waff$ which is isomorphic to $W\ltimes Q^\vee$. 
Moreover, $\Waff$  is a Coxeter group and the
 length is given by
$$
l(t^hw)=\sum_{
  \begin{array}{l}
    \alpha\in\Phi^+\\
w^{-1}\alpha\in \Phi^+
  \end{array}
}
|\langle h,\alpha\rangle|+
\sum_{
  \begin{array}{l}
    \alpha\in\Phi^+\\
w^{-1}\alpha\in \Phi^-
  \end{array}
}
|\langle h,\alpha\rangle-1|.
$$

The group $\Waff$ acts on $\hat\lh^*$. In particular the action of
$Q^\vee$ is given by
$$
h\longmapsto T_h\,:\,
\begin{array}{ccc}
  \hat\lh^*&\longto&\hat\lh^*\\
\chi&\longmapsto&\chi+\chi(c)(h,\square)
-[\chi(h)+\frac 1 2 ( h,h)
\chi(c)]\delta.
\end{array}
$$

\subsection{The fusion product}

If $\la$ is a Lie algebra and $M$ is a $\la$-module, we denote by
$
[M]_\la,
$
the biggest quotient of $M$ where $\la$ acts trivially.
 
Let $\lambda_1$, $\lambda_2$, and $\lambda_3$ be three dominant weights
of $\lg$ and $\bl\in\ZZ_{\geq 0}$ such that
$$
\langle\lambda_i,\theta^\vee\rangle\leq \bl,\quad{\rm for\ any\ } i=1,2,3.
$$
Then $\lambda_i+\bl\Lambda\in \hat\lh_\ZZ^{*+}$, and we can consider 
the $(\hat \kmL\lg)^3$-module $\kmV(\lambda_1+\bl\Lambda)\otimes
\kmV(\lambda_2+\bl\Lambda)\otimes \kmV(\lambda_3+\bl\Lambda)$.

Consider $\PP^1$ with three pairwise distinct marked points $p_1$, $p_2$ and
$p_3$.
Consider the ring of regular functions ${\mathcal
  O}(\PP^1-\{p_1,p_2,p_3\})$ and the Lie algebra $\lg\otimes {\mathcal
  O}(\PP^1-\{p_1,p_2,p_3\})$.
For any $p_i$, by fixing a local coordinate $z_i$ around this point of
$\PP^1$, one gets a morphism ${\mathcal
  O}(\PP^1-\{p_1,p_2,p_3\})\longto\CC((z))$. In particular, we just
defined three morphisms
$\lg\otimes {\mathcal
  O}(\PP^1-\{p_1,p_2,p_3\})\longto \lg\otimes\CC((z))$, or one
morphism
$\lg\otimes {\mathcal
  O}(\PP^1-\{p_1,p_2,p_3\})\longto (\lg\otimes\CC((z)))^3$.
This defines an action of $\lg\otimes {\mathcal
  O}(\PP^1-\{p_1,p_2,p_3\})$ on the $(\hat \kmL\lg)^3$-module $\kmV(\lambda_1+\bl\Lambda)\otimes
\kmV(\lambda_2+\bl\Lambda)\otimes \kmV(\lambda_3+\bl\Lambda)$.
The Vacua space is defined by
$$
V_{\PP^1}(\lambda_1,\lambda_2,\lambda_3,\bl)=\bigg(\kmV(\lambda_1+\bl\Lambda)\otimes
\kmV(\lambda_2+\bl\Lambda)\otimes \kmV(\lambda_3+\bl\Lambda) 
\bigg)_{\lg\otimes{\mathcal
  O}(\PP^1-\{p_1,p_2,p_3\})}
$$
It is proved to be finite dimensional (see e.g. \cite{Beauville:fusion}). Moreover, the fusion product
$\circledast_\text{\cursive l}$ is defined by
$$
V(\lambda_1)\circledast_\text{\cursive
  l}V(\lambda_2)=\sum_{\langle\lambda_3,\theta^\vee\rangle\leq
  \text{\cursive l}} \dim(V_{\PP^1}(\lambda_1,\lambda_2,\lambda_3;
\bl)) V(-w_0\lambda_3).
$$
The product $\circledast_\bl$ is associative and commutative (see
e.g. \cite{Beauville:fusion}). 

\subsection{The fusion product polytope}

The fundamental alcove in $X^*(T)_\QQ$ is 
$$
\alc^*_\QQ=\{\lambda\in X^*(T)_\QQ\,:\,\left\{
  \begin{array}{l}
    \langle\lambda,\alpha^\vee\rangle\geq 0\quad\forall\alpha\in\Delta\\
\langle\lambda,\theta^\vee\rangle\leq 1
  \end{array}
\right .
\}.
$$
For any $\lambda\in\alc^*_\QQ$, $\bl\in\ZZ_{>0}$ if
$\bl\lambda+\bl\Lambda\in\hat\lh_\ZZ^*$ then it is
dominant. Set
$$
\poly_\circledast=\{(\lambda_1,\lambda_2,\lambda_3)\in(\alc^*_\QQ)^3\;:\;
\exists \text{\cursive l}>0\quad
 V(-\text{\cursive l} w_0\lambda_3)\subset V({\text{\cursive l}\lambda_1})\circledast_\text{\cursive l} V(\text{\cursive l}\lambda_2)\}.
$$ 

\begin{theo}
 \label{th:mainFusion}
 Let $(\lambda_1,\lambda_2,\lambda_3)\in(\alc^*_\QQ)^3$. 
Then $(\lambda_1,\lambda_2,\lambda_3)\in \poly_\circledast$ if and only
if
\begin{eqnarray}
   \langle w_1\varpi_{\beta^\vee},\lambda_1\rangle+
\langle w_2\varpi_{\beta^\vee},\lambda_2\rangle+
\langle w_3\varpi_{\beta^\vee},\lambda_3\rangle\leq\frac 2 {(\beta,\beta)} d.
\end{eqnarray}
 for any simple root $\beta$,
any nonnegative integer $d$ and any $(w_1,w_2,w_3)\in (W^{P_\beta})^3$ such that 
\begin{eqnarray}
GW(w_1,w_2,w_3;d\sigma_{s_\beta}^*)=1,
\end{eqnarray}
and for any $\chi\in X^*(T)$
\begin{eqnarray}
  \sharp\Phi(w_1,\chi)+\sharp\Phi(w_2,\chi)+\sharp \Phi(w_3,\chi)+
\sum_{\alpha\in \Phi(G/P,\chi)}\langle h,\alpha\rangle
=2 \sharp \Phi(G/P_\beta,\chi).
\end{eqnarray}
\end{theo}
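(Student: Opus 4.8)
The plan is to run the Geometric Invariant Theory method for eigenvalue polytopes: I realize $\poly_\circledast$ as a semistability (equivalently moment) polytope, read off a numerical inequality for each relevant one-parameter subgroup from the Hilbert-Mumford criterion, and then isolate exactly the facet-defining ones. The guiding observation is that $\poly_\circledast$ should coincide with the multiplicative Horn polytope $\poly_K$ of Theorem~\ref{th:main} under the identification $X_*(T)_\RR\cong X^*(T)_\RR$ coming from the scalar product $(\ ,\ )$; thus the present theorem is the weight-side avatar of Theorem~\ref{th:main}, and much of the work is to make this identification and its normalizations precise.

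First I would unwind the definition. We have $(\lambda_1,\lambda_2,\lambda_3)\in\poly_\circledast$ precisely when the vacua space $V_{\PP^1}(\bl\lambda_1,\bl\lambda_2,\bl\lambda_3;\bl)$ is nonzero for some $\bl>0$. Using the standard dictionary between conformal blocks and products of conjugacy classes (as in \cite{TW:parab,MW}) together with the geometric realization of $V_{\PP^1}$ as sections of a line bundle on the moduli of parabolic $G$-bundles on $(\PP^1;p_1,p_2,p_3)$ (cf.\ \cite{Beauville:fusion}), this nonvanishing is equivalent to the semistability of such bundles with prescribed parabolic weights. The moment polytope of this problem is exactly $\poly_K$ read through $(\ ,\ )$, so this step reduces the theorem to transporting the inequalities of Theorem~\ref{th:main} to the weight side.

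Next I would translate the inequalities and check that the selection rules are unchanged. Under $X_*(T)_\RR\cong X^*(T)_\RR$ the datum $\tau_i$ corresponds to $\lambda_i$, and the transition relation $\varpi_{\alpha^\vee}=(\tfrac{2}{(\alpha,\alpha)}\varpi_\alpha,\square)$ rewrites $\langle w_i\varpi_{\beta^\vee},\lambda_i\rangle=\tfrac{2}{(\beta,\beta)}\langle w_i\varpi_\beta,\tau_i\rangle$. Clearing this common factor turns the bound $d$ of $\iI_\beta(w_1,w_2,w_3;d)$ into $\tfrac{2}{(\beta,\beta)}d$, which is exactly the right-hand side here. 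The two selection conditions are then literally the same as in Theorem~\ref{th:main}: the Gromov-Witten equality $GW(w_1,w_2,w_3;d\sigma_{s_\beta}^*)=1$ is unchanged, and the balancing identity~\eqref{eq:filtration} coincides with the one stated here once one notes that for $G/P_\beta$ in degree $d$ the element $h$ is $d\beta^\vee$, so $\langle h,\alpha\rangle=d\langle\beta^\vee,\alpha\rangle$.

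The hard part will be the characterization of the facets, that is, proving that the irredundant inequalities are precisely those indexed by $(\beta,d,w_1,w_2,w_3)$ with $GW=1$ and~\eqref{eq:filtration}. In the additive setting this is the theory of well-covering one-parameter subgroups of \cite{GITEigen}, for which a facet corresponds to Levi-movable Schubert data in a single $G/P_\beta$. Here the intersection problem is quantum, so the transversality must be set up on the moduli space $\Md$ of genus-zero three-pointed stable maps: I would show that a dominant one-parameter subgroup is well-covering exactly when the associated Gromov-Witten number equals $1$ and the tangent-space contribution over each character $\chi\in X^*(Z)$ balances as in~\eqref{eq:filtration}. Establishing this quantum analogue of Levi-movability, and carrying the normalization factor $\tfrac{2}{(\beta,\beta)}$ consistently through the whole identification, is the main obstacle.
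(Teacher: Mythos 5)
There is a genuine gap, and it has two layers. First, your overall reduction is circular relative to the paper's logic: you propose to deduce Theorem~\ref{th:mainFusion} from Theorem~\ref{th:main} via the conformal-blocks/conjugacy-classes dictionary, but that dictionary is exactly Theorem~\ref{th:eigen=fusion}, and in the paper the implication runs the other way --- Theorem~\ref{th:main} is \emph{obtained from} Theorem~\ref{th:mainFusion}, which is the statement proved directly. So your plan only works if you supply an independent proof of Theorem~\ref{th:main}, and that is precisely the part you defer. Second, that deferred part is the entire mathematical content of the theorem. The theorem is not a facet/irredundancy statement (irredundancy is not claimed here; it is attributed to \cite{BK:qHorn}); it is the assertion that the sublist of inequalities satisfying both $GW(w_1,w_2,w_3;d\sigma_{s_\beta}^*)=1$ \emph{and} the character-graded condition~\eqref{eq:filtration} still suffices to cut out $\poly_\circledast$. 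Proving this means showing that any point outside the polytope violates an inequality whose data satisfies both selection rules. Your proposal names the needed tool (``a quantum analogue of Levi-movability'' via well-covering one-parameter subgroups on $\Md$) but explicitly leaves it as ``the main obstacle,'' so nothing is actually established.

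For comparison, the paper's route never touches $\Md$ directly and never argues about facets. It converts nonvanishing of the vacua spaces into existence of $L^{<0}_{alg}G$-invariant sections of $\Li$ on $\mvar=\Gra\times(G/B)^3$ (Lemma~\ref{lem:vacuasection}), hence into the GIT cone $\cone^{\rm nss}(\mvar)$ of Theorem~\ref{th:mainGIT}. For an unstable point, the Harder--Narasimhan/open-stratum analysis (Propositions~\ref{prop:openstratum} and \ref{prop:unicitytau}) produces destabilizing data $(\tau_0,P,h,w_1,w_2,w_3)$ whose associated fiber of the map $\eta\colon L^{<0}_{alg}G\times_{L^{<0}_{alg}P}C^+\to\mvar$ is a single point, and Proposition~\ref{prop:etaGW} (Kleiman's theorem, through the identification $L^{<0}_{alg}G/L^{<0}_{alg}P\simeq\Mor(\PP^1,G/P)$) converts this into $GW(w_1,w_2,w_3;\ud)=1$; this already reproves Teleman--Woodward (Lemma~\ref{lem:TW}). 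The condition~\eqref{eq:filtration} is then extracted by a separate argument: using the Peterson--Woodward lifting $h_{PW}$ (Lemma~\ref{lem:PW}), the relevant space of curves is realized as a vector space $\mcM^\circ_h$ of polynomial maps graded by the characters $\chi\in X^*(Z)$, and a dimension count in each $\chi$-component, summed and compared with the global equality forced by $GW=1$, shows every $\chi$-component inequality is an equality --- which is exactly~\eqref{eq:filtration}. This character-graded dimension count is the key idea your proposal is missing; without it (or a worked-out quantum Levi-movability argument in the style of \cite{BK:qHorn}), the proof does not go through.
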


Let $\lambda\in X^*(T)_\QQ$. 
Since $\theta^\vee=(\theta,\square)$, $\lambda$ belongs to
$\alc_\QQ^*$ if and only if $\lambda\in\alc_*$. 
Theorems~\ref{th:main} and \ref{th:mainFusion} are equivalent knowing
the following. 

\begin{theo}[see \cite{TW:parab}]
 \label{th:eigen=fusion}
 Let $(\lambda_1,\lambda_2,\lambda_3)\in(\alc^*_\QQ)^3$. 
Then $(\lambda_1,\lambda_2,\lambda_3)\in \poly_\circledast$ if and only
if $((\lambda_1,\square),(\lambda_2,\square)),(\lambda_3,\square))\in \poly_K$. 
\end{theo}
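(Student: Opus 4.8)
The plan is to read both polytopes as descriptions of one and the same geometric object, the moduli of flat $K$-connections on the three-punctured sphere, the passage between them being geometric quantization. First I would unwind the definition of $\poly_\circledast$. Substituting $\lambda_1\mapsto\bl\lambda_1$, $\lambda_2\mapsto\bl\lambda_2$ and level $\bl$ into the definition of $\circledast_\bl$, the module $V(-\bl w_0\lambda_3)=V(-w_0(\bl\lambda_3))$ occurs in $V(\bl\lambda_1)\circledast_\bl V(\bl\lambda_2)$ with multiplicity $\dim V_{\PP^1}(\bl\lambda_1,\bl\lambda_2,\bl\lambda_3;\bl)$, the weight $\bl\lambda_3$ being admissible precisely because $\langle\lambda_3,\theta^\vee\rangle\leq 1$. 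Writing $V_\bl:=V_{\PP^1}(\bl\lambda_1,\bl\lambda_2,\bl\lambda_3;\bl)$, the condition $(\lambda_1,\lambda_2,\lambda_3)\in\poly_\circledast$ thus reads: $V_\bl\neq 0$ for some $\bl>0$ making the three $\bl\lambda_i$ integral. The theorem asserts that this nonvanishing of conformal blocks, for some level, detects membership in the multiplicative Horn polytope.

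On the classical side, $((\lambda_1,\square),(\lambda_2,\square),(\lambda_3,\square))\in\poly_K$ means exactly that the representation variety
$$
\mathcal{M}=\{(g_1,g_2,g_3)\in \Orb_{(\lambda_1,\square)}\times\Orb_{(\lambda_2,\square)}\times\Orb_{(\lambda_3,\square)}\;:\;g_1g_2g_3=e\}
$$
is nonempty. In genus zero the Mehta--Seshadri correspondence, under the normalization turning the alcove point $\lambda_i$ into a parabolic weight, identifies $\mathcal{M}/K$ with the moduli space $\mathcal{M}_{\rm par}$ of semistable parabolic $G$-bundles on $\PP^1$ carrying parabolic weights $\lambda_1,\lambda_2,\lambda_3$ at the three marked points. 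Hence $\poly_K$-membership is the nonemptiness of the projective variety $\mathcal{M}_{\rm par}$.

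The quantization step links the two. By the theory of conformal blocks (see \cite{Beauville:fusion} and references therein) the vacua space $V_\bl$ is isomorphic to $H^0(\mathcal{M}_{\rm par},\mathcal{L}_\bl)$ for a level-$\bl$ determinant line bundle $\mathcal{L}_\bl$. One implication is immediate: $V_\bl\neq 0$ forces $\mathcal{M}_{\rm par}\neq\emptyset$, hence $\mathcal{M}\neq\emptyset$, i.e.\ $((\lambda_1,\square),(\lambda_2,\square),(\lambda_3,\square))\in\poly_K$. For the converse I must produce one nonzero block at an admissible level out of the mere nonemptiness of $\mathcal{M}_{\rm par}$; positivity of $\mathcal{L}_\bl$ on the nonempty $\mathcal{M}_{\rm par}$ yields a nonzero section for $\bl$ large, and the multiplication of sections $V_\bl\otimes V_{\bl'}\to V_{\bl+\bl'}$, together with the rationality of the $\lambda_i$, lets one take $\bl$ both integral and admissible, placing each rational point of $\poly_K$ in $\poly_\circledast$.

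The hard part will be this converse at the boundary of the polytope, where $\mathcal{M}_{\rm par}$ may be singular and $\mathcal{L}_\bl$ only nef, so that nonemptiness alone need not force $H^0\neq 0$ at a controllable level. This is exactly where one invokes quantization commutes with reduction in the quasi-Hamiltonian (Hamiltonian loop group) setting: the Meinrenken--Woodward convexity picture \cite{MW} exhibits $\poly_K$ as the moment polytope of a quasi-Hamiltonian space obtained by fusing the three conjugacy classes, and the Teleman--Woodward index formula \cite{TW:parab} computes $\dim V_\bl$ as the Riemann--Roch number of the associated reduction, which is nonzero precisely when that reduction is nonempty. Throughout I would keep careful track of the normalizations relating level, weight scaling and the alcove conventions, in particular the identification $\lambda\mapsto(\lambda,\square)$ and the factor $\frac{2}{(\beta,\beta)}$, which is what converts the fusion inequalities of Theorem~\ref{th:mainFusion} into those of Theorem~\ref{th:main}.
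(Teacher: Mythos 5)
The paper does not actually prove Theorem~\ref{th:eigen=fusion}: it is imported wholesale from Teleman--Woodward \cite{TW:parab} (hence the citation in the statement) and is used only as a black box to pass between Theorem~\ref{th:mainFusion} and Theorem~\ref{th:main}. So there is no internal proof to compare against; what you have written is in effect a reconstruction of the argument of the cited paper, and its architecture is the right one. Your unwinding of $\poly_\circledast$ into nonvanishing of a vacua space at some level is correct (and admissibility is indeed automatic from $\lambda_i\in\alc^*_\QQ$); the identification of membership in $\poly_K$ with nonemptiness of a moduli of semistable parabolic $G$-bundles on $\PP^1$, and of vacua spaces with sections of the determinant line bundle over that moduli, with quantization-commutes-with-reduction handling the boundary cases, is precisely the Teleman--Woodward route. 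It is also worth noting that this is how the present paper's own machinery would prove the theorem if it chose to: Lemma~\ref{lem:vacuasection} is the ``conformal blocks are invariant sections'' step (on $\mvar=\Gra\times(G/B)^3$ rather than on a moduli stack), Corollary~\ref{cor:ss=ss} is the ``numerical semistability equals parabolic semistability'' step, and the bridge you invoke is exactly what relates nonvanishing of $L^{<0}_{alg}G$-invariant sections at some level to nonemptiness of the numerically semistable locus, i.e.\ to the cone $\cone^{\rm nss}(\mvar)$ of Theorem~\ref{th:mainGIT}.

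Two points deserve more care than your sketch gives them. First, your ``immediate'' direction is only immediate if the vacua space is identified with sections over the \emph{semistable} locus (or its GIT quotient); if one only has the identification over the full stack of parabolic bundles (theorems of Laszlo--Sorger/Pauly type), then the vanishing of sections when the semistable locus is empty is itself part of Teleman's restriction theorem, not a triviality. Second, ``the Riemann--Roch number \dots is nonzero precisely when that reduction is nonempty'' is false for a bare Euler characteristic; what makes it true here is higher-cohomology vanishing, so the correct assertion is $h^0>0$, not $\chi\neq 0$. Neither point is a fatal gap --- both are contained in the theorems you cite --- but they are exactly where the real content of \cite{TW:parab} lives, so a proof that names them as the load-bearing inputs, rather than as routine steps, would be the honest version of your argument.
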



\section{The affine Grassmannian}

In this section we collect some results and notation on the affine
Grassmannian $\Gra$ of $G$.
Set $
  L_{alg}G=G(\CC[z,z^{-1}])$ and 
$L^{>0}_{alg}G=G(\CC[z])$.
Consider the affine grassmannian $\Gra= L_{alg}G/ L_{alg}^{>0}G$.

\subsection{Line bundles}

Let $\bar L_{alg}G=\CC^*\ltimes L_{alg}G$ and $\kmG$ denote the
affine Kac-Moody group associated to $G$; it is a  
central extension of $\bar L_{alg}G=\CC^*\ltimes L_{alg}G$. 
The maximal torus of $\kmG$ containing $T$ is denoted by $\hat T$; its
Lie algebra is $\hat\lh$ and its character group is $\hat\lh^*_\ZZ$.
The group $\kmG$ acts on $\Gra$.

Let $\bl\in\ZZ$. There exists a unique $\kmG$-linearized line bundle
$\Li(\bl\Lambda)$ on $\Gra$ such that $\hat\lh$ acts on the fiber over
the base point of $\Gra$ by the weight $-\bl\Lambda$ (see \eg
\cite[Chap VII]{Kumar:KacMoody}).
Moreover, $H^0(\Gra,\Li(\bl\Lambda))$ is zero if $\bl<0$ and
isomorphic to the dual of $\kmV(\bl\Lambda)$ if $\bl\geq 0$.\\

Recall that $\kmG$ is a central extension of the semidirect product
$\CC^*\ltimes L_{alg}G$:

\begin{center}
\begin{tikzpicture}
  \matrix (m) [matrix of math nodes,row sep=3em,column sep=4em,minimum width=2em]
  {1&\CC^*&\kmG&\CC^*\ltimes L_{alg}G&1.\\};
  \path[-stealth]  
    (m-1-1) edge  (m-1-2) 
    (m-1-2) edge (m-1-3) 
    (m-1-3) edge (m-1-4)
    (m-1-4) edge (m-1-5);       
\end{tikzpicture} 
\end{center}

This exact sequence splits canonically  over $L_{alg}^{>0}G$. In
particular, $\Li(\bl\Lambda)$ admits a $L_{alg}^{>0}G$-linearization.

\subsection{The Cartan decomposition}

Any one parameter subgroup $h$ of $T$ can be seen as an element of
$L_{alg}G$. Its image in $
\Gra$ is denoted by $L_h$.
Then $\{L_h\,:\,h\in X_*(T)\}$ is the set of $T$-fixed points in
$\Gra$.
The $L^{>0}_{alg}G$-orbit of $L_h$ only depends on the $W$-orbit of $h$
in $X_*(T)$; it is denoted by $\Gra_h$. It is a quasiprojective
variety of  finite  dimension $\langle\rho,h\rangle$ (if $h\in
X_*(T)^+$) and 
the Cartan decomposition asserts that 
$$
\Gra=\bccup_{
    h\in X_*(T)^+
}\Gra_h.
$$
The closure of $\Gra_h$ is described by the order $\leq$:
$$
\overline{\Gra_h}=\bccup_{
\begin{array}{l}
    h'\in X_*(T)^+
\\
h'\leq h
  \end{array}
} \Gra_{h'}.
$$

There exists a unique one parameter subgroup $\delta^\vee$ of $\hat T$
such that $\langle\delta^\vee,\delta\rangle=1$,  $\langle\delta^\vee,\Lambda\rangle=0$, 
and  $\langle\delta^\vee,\delta\rangle=0$, for any $\chi\in X^*(T)$.
The irreducible components of
$\Gra^{\delta^\vee}$ are the $G.L_h$ for $h\in X_*(T)$.
Moreover, $G.L_h$ is isomorphic to $G/P(h)$.
Then
$$
\Gra_h=\{x\in \Gra\;:\;\lim_{t\to 0}\delta^\vee(t)x\in G.L_h\}.
$$

\noindent{\bf Raffinement.}
Consider the evaluation morphism $ev_0\,:\,L_{alg}^{>0}G\longto G$ at
$z=0$. Set $\Iw=ev_0^{-1}(B)$.
We have 
$$
\Gra=\bccup_{
    h\in X_*(T)
}\Iw L_h.
$$


\subsection{The Birkhoff decomposition}

Consider the action of the group $L^{<0}_{alg}G=G(\CC[z^{1}])$ on
$\Gra$. Its orbits are parametrized by $X_*(T)^+$ and setting
$\Gra^h=L^{<0}_{alg}G L_h$, we have
$$
\Gra=\bccup_{h\in X_*(T)^+}\Gra^h.
$$
Moreover
$$
\overline{\Gra^h}=\bccup_{
\begin{array}{l}
    h\in X_*(T)^+
\\
h\leq h'
  \end{array}}
\Gra^{h'}.
$$

For any $h\in X_*(T)^+$,
the orbit $\Gra_h$ has codimension $\langle\rho,h\rangle$.
Moreover
$$
\Gra^h=\{x\in \Gra\;:\;\lim_{t\to \infty}\delta^\vee(t)x\in G.L_h\}.
$$

\noindent{\bf Raffinement.}
Let $ev_\infty\,:\,L_{alg}G\longto G$ denote the 
evaluation at $z^{-1}=0$.
 Set $\Iw^-=ev_\infty^{-1}(B^-)$.
Then
$$
\Gra=\bccup_{
    h\in X_*(T)
}\Iw^- L_h.
$$
For any $h\in X_*(T)^+$,
\begin{eqnarray}
  \label{eq:GrahBmoins}
\Gra^h=\bigcup_{w\in W}\Iw^-L_{wh}.
\end{eqnarray}
Consider $\rho^\vee$ the half sum of the positive coroots. It is a
dominant and regular one parameter subgroup of $T_{ad}$.
Moreover, 
\begin{eqnarray}
  \label{eq:BmoinsBB}
\Iw^-L_{h}=\{\monx\in\mvar\,|\,\lim_{t\to\infty}(\delta^\vee+\rho^\vee)(t).\monx=L_h\}.
\end{eqnarray}

\subsection{The Peterson decomposition}

Consider the group $L_{alg}U$. 

\begin{theo}\label{th:decPeterson}
We have
$$
L_{alg}G=\bigcup_{w\in\Waff} \Iw w L_{alg}U =
\bigcup_{w\in\Waff} \Iw^- w L_{alg}U.
$$
\end{theo}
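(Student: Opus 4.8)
The plan is to deduce both equalities from the affine Bruhat (Iwahori--Tits) decomposition, reducing the whole statement to a rank-one computation that trades the ``missing'' root directions for elements of $L_{alg}U$ at the cost of modifying the affine Weyl element. I would treat the two equalities uniformly: write $\KoE$ for either the Iwahori $\Iw=ev_0^{-1}(B)$ or the opposite Iwahori $\Iw^-=ev_\infty^{-1}(B^-)$, so that in both cases $\KoE$ is an Iwahori subgroup whose real affine root groups are exactly the affine-positive (resp. affine-negative) ones. The Bruhat decomposition $L_{alg}G=\bccup_{w\in\Waff}\KoE\,\dot w\,\KoE$ is standard (see \cite{Kumar:KacMoody}), and since the inclusion $\bigcup_w\KoE\,\dot w\,L_{alg}U\subseteq L_{alg}G$ is trivial, only the reverse inclusion must be shown. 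Writing $L_{alg}G=\bigcup_w\KoE(\dot w\,\KoE)$, it then suffices to prove, for every $w\in\Waff$, that $\dot w\,\KoE\subseteq\bigcup_{w'\in\Waff}\KoE\,\dot{w'}\,L_{alg}U$.

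Fix $w$ and factor $\KoE=\hat T\cdot\KoE^u$, where $\KoE^u$ is the pro-unipotent radical. I would order the root groups of $\KoE^u$ and sort them by the sign of the finite part $\alpha$ of each affine root $\alpha+n\delta$. The $\hat T$-valued (torus and imaginary) directions are harmless, since $T(\CC[z,z^{-1}])$ normalizes $L_{alg}U$ and they can be absorbed after translating by $\dot w$. The real root groups $U_{\alpha+n\delta}$ with $\alpha\in\Phi^+$ already lie in $L_{alg}U$ (which contains all powers of $z$), so these too are absorbed on the right into $L_{alg}U$. The only genuinely obstructing factors are the root groups $U_{\alpha+n\delta}$ with $\alpha\in\Phi^-$, which lie neither in $L_{alg}U$ nor, after conjugation by $\dot w$, necessarily in $\KoE$.

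The key device for these is the rank-one identity in the subgroup ($\cong\SL_2$ or $PGL_2$) attached to the affine real root $\gamma=\alpha+n\delta$ with $\alpha\in\Phi^-$: for $c\neq 0$ one has $x_\gamma(c)\in U_{-\gamma}\,\dot s_\gamma\,U_{-\gamma}\,\hat T$. Crucially $-\gamma=(-\alpha)+(-n)\delta$ has $-\alpha\in\Phi^+$, so $U_{-\gamma}\subseteq L_{alg}U$ \emph{irrespective of the sign of $n$}; this is precisely why the \emph{same} upper $L_{alg}U$ occurs in both equalities. Thus each obstructing factor is rewritten using only $L_{alg}U$-factors, one element of $\hat T$, and one affine reflection $s_\gamma$. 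I would clear these factors one at a time, pushing the newly created left factor past the current representative via $\dot w\,x_{-\gamma}(c^{-1})=x_{-w\gamma}(\pm c^{-1})\,\dot w$ and absorbing it into the left copy of $\KoE$, while the reflection updates $w\rightsquigarrow w\,s_\gamma$.

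The main obstacle is the bookkeeping that makes this induction terminate and remain consistent: one must process the obstructing roots in a suitable order (by height, or along a reduced word for $w$) guaranteeing that each expelled left factor $x_{-w\gamma}(\cdot)$ really lands among the root groups of $\KoE$, and that a length statistic counting the remaining $\Phi^-$-directions strictly decreases, so that no obstructions are regenerated uncontrollably. Granting this, the procedure ends in an expression $b\,\dot{w'}\,u$ with $b\in\KoE$, $w'\in\Waff$ and $u\in L_{alg}U$, which establishes $\dot w\,\KoE\subseteq\bigcup_{w'}\KoE\,\dot{w'}\,L_{alg}U$ and hence the theorem for $\KoE=\Iw$ and for $\KoE=\Iw^-$ simultaneously.
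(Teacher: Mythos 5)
Your two structural ingredients are sound: the affine Bruhat decomposition $L_{alg}G=\bigcup_{w\in\Waff}\KoE\dot w\KoE$ does hold for $\KoE=\Iw$ and for $\KoE=\Iw^-$, and the rank-one identity $x_\gamma(c)\in U_{-\gamma}\dot s_\gamma U_{-\gamma}\hat T$, together with the observation that $U_{-\gamma}\subset L_{alg}U$ whenever the finite part of $\gamma$ is negative, is correct (and correctly explains why the same $L_{alg}U$ serves both equalities). But there are two genuine gaps. The first is the sorting step: in the polynomial loop group an element of $\KoE$ need \emph{not} factor with the obstructing (negative finite part) factors and the torus to the left of the $L_{alg}U$-factors. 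For $G=\SL_2$ take
$$
b(z)=\begin{pmatrix}1+z&1\\ z&1\end{pmatrix}=x_\alpha(1)\,x_{-\alpha}(z)\in\Iw;
$$
every element of $B^-(\CC[z,z^{-1}])\cdot U(\CC[z,z^{-1}])$ has upper-left entry a unit of $\CC[z,z^{-1}]$, while $1+z$ is not one, so no factorization of $b$ of the shape (obstructing factors)$\cdot$(torus)$\cdot$($L_{alg}U$-factors) exists. Sorting root factors by the sign of the finite part is a Birkhoff--Gauss type factorization and fails pointwise; it is available only inside the finite-dimensional groups $U_w$ of the refined cells $\KoE\dot w\KoE=\KoE\dot wU_w$, to which you could have reduced instead.

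The second gap is the one you set aside with ``granting this'', and it is not bookkeeping but the actual content of the theorem. After one application of the rank-one identity,
$$
\dot w\,x_\gamma(c)\,r\;=\;\bigl[\dot w v_1\dot w^{-1}\bigr]\,\bigl(\dot w\dot s_\gamma\bigr)\,t\,v_2\,r,
\qquad v_1,v_2\in U_{-\gamma},
$$
two things go wrong. The bracketed factor lies in $\KoE$ only if $w\gamma$ is affine-negative, which is not automatic (the case $w\gamma$ affine-positive must be treated separately by direct conjugation). More seriously, $v_2$ now sits \emph{between} the new representative and the remaining obstructing factors in $r$; to restore the configuration your induction runs on, you must push $v_2$ through $r$, and the Chevalley commutator relations then create new root factors, some of them again obstructing --- indeed $\dot s_\gamma^{-1}U_{-\gamma}\dot s_\gamma=U_\gamma$, so a factor expelled at one step can reappear as an obstruction at the next. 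No ordering is exhibited for which your ``length statistic'' is monotone, and none follows formally from the BN-pair axioms: $L_{alg}U$ is the semi-infinite unipotent subgroup (all $U_{\alpha+n\delta}$ with $\alpha\in\Phi^+$ and $n$ arbitrary), which is not one of the standard subgroups attached to the affine BN-pair. This is precisely why the paper takes a different, geometric route: Lemma~\ref{lem:degphi} attaches to $g$ the morphism $\phi\,:\,\PP^1\longto G/B$ extending $z\mapsto g(z)B/B$, and the memberships in $\Iw w_2 z^{h_2}L_{alg}U$ and $\Iw^-w_1z^{h_1}L_{alg}U$ are read off from the Bruhat cells containing $\phi(0)$ and $\phi(\infty)$ and from the degree of $\phi$, computed via valuations of $z\mapsto g(z)v_\lambda$ in highest weight representations. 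Some non-formal input of this kind (or the formal-loop Iwasawa decomposition plus a descent to polynomial loops) is what your sweep is missing.
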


For later use, we prove the following lemma due to Peterson
\cite{Pet:course}. It implies easily Theorem~\ref{th:decPeterson}.

\begin{lemma}
\label{lem:degphi}
Let $g\in L_{alg}G$. Assume that
$g\in \Iw^-w_1z^{h_1}L_{alg}U$ and  $g\in \Iw w_2z^{h_2}L_{alg}U$ for $w_1$, $w_2$ in $W$,  and $h_1,\,h_2$ in $Q^\vee$.

Consider  $\phi\,:\,\PP^1\longto G/B$ that extends $\CC^*\longto G/B,
z\longmapsto g(z) B/B$.
\\

The map $\phi$ has degree
$h_2-h_1\in\Hom(Pic(G/B)=X(T),\ZZ)=\Hom(X(T),\ZZ)$. 
Moreover $\phi(\infty)\in B^-w_1B/B$ and $\phi(0)\in B w_2B/B$. 
\end{lemma}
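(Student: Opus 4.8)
The plan is to extract the degree and the two boundary‐point conditions directly from the two Birkhoff/Bruhat-type factorizations of $g$, by analyzing how the $T$-fixed points and the associated line bundles behave under the two decompositions. The key input is the interpretation of the degree of a map $\PP^1\to G/B$ as an element of $\Hom(X^*(T),\ZZ)$ via pullback of line bundles, together with the description of $\Iw^-L_h$ and $\Iw L_h$ via limits of the one parameter subgroup $\delta^\vee+\rho^\vee$ recorded in~\eqref{eq:BmoinsBB}. Concretely, I would first reduce to understanding $\phi$ one line bundle at a time: for each $\lambda\in X^*(T)$ one has the associated line bundle $\KoL(\lambda)$ on $G/B$, and $\deg(\phi)(\lambda)=\deg(\phi^*\KoL(\lambda))$. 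So the statement $\deg(\phi)=h_2-h_1$ is the assertion that $\langle\deg(\phi),\lambda\rangle=\langle h_2-h_1,\lambda\rangle$ for all $\lambda$.

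Next I would compute these two pairings separately at the two marked points $0$ and $\infty$ of $\PP^1$. From $g\in\Iw w_2 z^{h_2}L_{alg}U$ I read off the behavior near $z=0$: since $\Iw=ev_0^{-1}(B)$ and $L_{alg}U$ evaluates into $U$ at $0$, the reduction $g(z)B/B$ tends, as $z\to 0$, into the Schubert cell $Bw_2B/B$, which gives $\phi(0)\in Bw_2B/B$; symmetrically $g\in\Iw^- w_1 z^{h_1}L_{alg}U$ controls the behavior near $z=\infty$ through $ev_\infty$ and yields $\phi(\infty)\in B^- w_1B/B$. The factors $z^{h_2}$ and $z^{h_1}$ are precisely what record the vanishing/pole orders of pulled-back sections at the two points: pulling back a section of $\KoL(\lambda)$ and trivializing near each marked point, the contribution of $z^{h_i}$ to its order is $\langle h_i,\lambda\rangle$ (with a sign at $\infty$), while the $\Iw^{\pm}$ and $L_{alg}U$ factors contribute regular, nonvanishing local data and so do not change the order. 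Summing the local orders over $\PP^1$ gives the degree, and the two contributions combine to $\langle h_2-h_1,\lambda\rangle$.

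The main obstacle, and the step I would treat most carefully, is the bookkeeping of signs and of the $W$-twists $w_1,w_2$ in the order computation: one must check that the Weyl group elements $w_i$ and the unipotent factors in $\Iw^{\pm}$ and $L_{alg}U$ genuinely do not affect the \emph{degree} (they only move the limit points into the correct Bruhat cells), and that the orientation convention at $z=\infty$ (using $z^{-1}=0$ through $ev_\infty$) produces the correct sign $-\langle h_1,\lambda\rangle$ rather than $+\langle h_1,\lambda\rangle$. A clean way to organize this is to use the cocharacter $\delta^\vee$ acting on $\Gra$ and the fact from~\eqref{eq:BmoinsBB} that $(\delta^\vee+\rho^\vee)$ contracts $\Iw^- L_h$ onto $L_h$; this lets one replace $g$ by its $T$-fixed limits $L_{h_1}$ and $L_{h_2}$ for the purpose of computing the degree, after which the weight of $\KoL(\lambda)$ at $L_{h_i}$ is literally $\langle h_i,\lambda\rangle$ and the difference $h_2-h_1$ drops out. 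Once the degree is identified, the two boundary conditions $\phi(\infty)\in B^- w_1B/B$ and $\phi(0)\in Bw_2B/B$ follow immediately from evaluating the respective factorizations at $z=\infty$ and $z=0$ via $ev_\infty$ and $ev_0$.
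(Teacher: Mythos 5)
Your overall skeleton coincides with the paper's: factor $g$ according to the two Peterson cosets, read off $\phi(0)$ and $\phi(\infty)$ by evaluating the regular factor ($\Iw$ at $z=0$, $\Iw^-$ at $z=\infty$), and get the degree as a sum of two local orders, with the factor $z^{h_i}$ contributing $\langle\lambda,h_i\rangle$ at each end. The paper implements this by composing $\phi$ with $G/B\longto\PP(V(\lambda))$, $gB/B\longmapsto g[v_\lambda]$, and comparing the valuation at $0$ with the polynomial degree in $z$ of the vector-valued function $z\longmapsto g(z)v_\lambda$.

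However, your justification for discarding the $L_{alg}U$-factor is false, and this is exactly the crux. You assert that ``$L_{alg}U$ evaluates into $U$ at $0$'' and that this factor ``contributes regular, nonvanishing local data'': an element of $L_{alg}U=U(\CC[z,z^{-1}])$ is a Laurent loop which in general has no limit at $z=0$ nor at $z=\infty$; only the $\Iw^{\pm}$-factors are regular at the relevant point. The reason $u(z)$ is harmless is entirely different: writing $g(z)=b(z)\tilde w_2t^{h_2}u(z)$, the factors $t^{h_2}u(z)$ lie in $B$ pointwise, hence act trivially on the base point, so $\phi(z)=b(z)w_2B/B$ (which gives the claims on $\phi(0)$ and $\phi(\infty)$), and $u(z)$ fixes the highest weight vector, $u(z)v_\lambda=v_\lambda$ (equivalently, every character of $B$ kills it), so that $g(z)v_\lambda=z^{\langle\lambda,h_2\rangle}b(z)\tilde w_2 v_\lambda$; the order at $0$ is then exactly $\langle\lambda,h_2\rangle$ because $b(z)\tilde w_2v_\lambda$ is polynomial in $z$ with nonzero value at $z=0$. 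Without this mechanism your order bookkeeping does not go through as stated. Moreover, your proposed ``clean'' organization via the contraction \eqref{eq:BmoinsBB} is unsound: $\phi$ and its degree depend on $g\in L_{alg}G$ itself, not on its class in $\Gra$ (right translation by $L_{alg}^{>0}G$ changes $z\longmapsto g(z)B/B$ and its degree --- compare $g=e$ with a nonconstant polynomial loop), and the Peterson cosets $\Iw^{\pm}wz^hL_{alg}U$ are subsets of the group, not the orbits $\Iw^{\pm}L_h\subset\Gra$ to which \eqref{eq:BmoinsBB} applies; so ``replacing $g$ by its $T$-fixed limits'' discards precisely the data that determines $\deg\phi$.
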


\begin{proof}
  Write $g(z)=b(z) \tilde{w}_2 t^{h_2}u(z)$ with $b(z)\in \Iw$, $\tilde{w}_2\in N(T)$ a representant of $w_2$ and $u(z)\in L_{alg}U$.
Then $\phi(z)=b(z) \tilde{w}_2 t^{h_2}u(z).B/B=b(z) w_2.B/B$. 
Since $b(0)\in B$, we obtain $\phi(0)\in B w_2B/B$.
Similarly $\phi(\infty)\in B^-w_1B/B$.\\

It remains to compute the degree of $\phi$. Fix a dominant weight $\lambda$ of $G$. 
Consider  the irreducible $G$-representation $V(\lambda)$ of highest
weight $\lambda$ and an highest weight vector $v_\lambda$. 
Consider the  morphism $G/B\longto\PP(V(\lambda)),\,gB/B\longmapsto
g[v_\lambda]$ and its  composition  $\phi_\lambda\,:\,\PP^1\longto\PP(V(\lambda))$ with $\phi$. 
It remains to prove that  $deg(\phi_\lambda)=\langle\lambda,h_1-h_2\rangle$.

We reuse the writing $g(z)=b(z) \tilde{w}_2 t^{h_2}u(z)$:
$$
\forall z\in\CC^*\qquad \phi_\lambda(z)= [b(z) \tilde{w}_2 t^{h_2}u(z)\cdot v_\lambda]=
[z^{\langle \lambda, h_2\rangle} b(z) \tilde{w}_2\cdot v_\lambda].
$$
Since $b(z)$ is polynomial in $z$, this implies that the valuation (at
zero) of $z^{\langle
  \lambda, h_2\rangle} b(z) \tilde{w}_2\cdot v_\lambda$ is at least $\langle \lambda, h_2\rangle$.
Since $b(z)$ has a limit in $G$ at $z=0$, $b(z) \tilde{w}_2\cdot
v_\lambda$ has a nonzero  limit in $V(\lambda)$ at $z=0$. 
Hence the valuation of $z^{\langle \lambda, h_2\rangle} b(z)
\tilde{w}_2\cdot v_\lambda$ is  exactly $\langle \lambda, h_2\rangle$.

A similar computation with $\Iw^-w_1t^{h_1}L_{alg}U$ shows that
the degree of $z\longmapsto g(z)v_\lambda$ is exactly $\langle \lambda, h_1\rangle$.
Finally, the degree of $\phi_\lambda$ is $\langle \lambda, h_1-h_2\rangle$.
\end{proof}

\bigskip
For $h\in X_*(T)$, set $S_h=L_{alg}U L_h$. Then 
$$
\Gra=\bccup_{h\in X_*(T)}S_h,
$$
and
\begin{eqnarray}
  \label{eq:Shbar}
\overline{S_h}=\bccup_{
\begin{array}{l}
    h'\in X_*(T)
\\
h'\leq h
  \end{array}}
S_{h'}.
\end{eqnarray}
The orbit $S_h$ has neither finite dimension nor finite codimension.
The fixed points of $\rho^\vee$ are the $L_h$ for $h\in X_*(T)$ and
$$
S_h=\{x\in \Gra\;:\;\lim_{t\to 0}\rho^\vee(t)x=L_h\}.
$$

\bigskip{\bf Variation.} Let $P\supset B$ be a parabolic subgroup and
consider $L_{alg}P$. There exists a surjective group morphism
$$
\Chi\;:\;
L_{alg}P\longto \Hom(X^*(P),\ZZ)
$$
defined as follows. 
Let $p\in L_{alg}P$ considered as a regular map $p\,:\,\CC^*\longto
P$ and $\chi\in X^*(P)$. 
Then $\chi\circ p$ is a regular map from $\CC^*$ to $\CC^*$. 
Hence, there exist $n\in\ZZ$ and $\lambda\in\CC^*$ such that
$\chi(p)(z)=\lambda z^n$, for any $z\in\CC^*$. Then $\Chi(p)(\chi)$ is
defined to be   $n$.
The kernel of $\Chi$ is denoted by $(L_{alg}P)_0$.

Let $L$ be the Levi subgroup of $P$ containing $T$ and $L^{\rm ss}$ be
its semisimple part. 
Two orbits $S_h$ and $S_{h'}$ are contained in the same
$(L_{alg}P)_0$-orbit if and only if $h-h'\in X_*(T\cap L^{\rm  ss})$.
 Since $X_*(T)=\oplus_{\alpha\in\Delta}\ZZ\alpha^\vee$ and 
$X_*(T\cap L^{\rm  ss})=\oplus_{\alpha\in\Delta_P}\ZZ\alpha^\vee$, we
get 
\begin{eqnarray}
  \label{eq:43}
  \Gra=\bccup_{h\in \oplus_{\alpha\in\Delta-\Delta_P}\ZZ\alpha^\vee} S_h^P,
\end{eqnarray}
where $S_h^P=(L_{alg}P)_0 L_h$.
Let $\tau\in X_*(T)$ such that $P=P(\tau)$.
The irreducible components of the fixed point set $\Gra^\tau$ are the orbit
$C_h^L:=L_{alg}L^{\rm ss}.L_h$ for $h\in \oplus_{\alpha\in\Delta-\Delta_P}\ZZ\alpha^\vee$. 
 Moreover, 
 \begin{eqnarray}
   \label{eq:53}
   S_h^P=\{x\in \Gra\,:\,\lim_{t\to 0}\tau(t)x\in C_h^L\}.
 \end{eqnarray}

Observe that $C_h^L=L_{alg}L^{\rm ss}.L_h$ is well-defined for any
$h\in X_*(T)$, but depends only on the class of $h$ in
$X_*(T)/X_*(T\cap L^{\rm ss})$. Above, we choose
$\oplus_{\alpha\in\Delta-\Delta_P}\ZZ\alpha^\vee$ as a complete system of
representant for this quotient. The following result due to
Peterson-Woodward gives another representative (see~\cite[Lemma~1]{Woodward:compare}):

\begin{lemma}
\label{lem:PW}
  Each class $h\in X_*(T)/X_*(T\cap L^{\rm ss})$ has a unique
  representative $h_{PW}\in X_*(T)$ such that 
$$
\langle h_{PW},\alpha\rangle=0{\mbox \ or }-1,\,\mbox{for any }\alpha\in \Phi^+\cap\Phi(L).
$$
\end{lemma}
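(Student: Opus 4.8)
The plan is to reformulate the condition through the element $\eta=-h_{PW}$ and to recognize the defining inequalities as those of the fundamental alcove of the semisimple part $L^{\rm ss}$. Write $Q^\vee_L=X_*(T\cap L^{\rm ss})=\bigoplus_{\alpha\in\Delta_P}\ZZ\alpha^\vee$, let $V_L\subset X_*(T)_\RR$ be the real span of the coroots $\alpha^\vee$, $\alpha\in\Delta_P$, let $P^\vee_L=\{v\in V_L:\langle v,\alpha\rangle\in\ZZ\ \forall\alpha\in\Phi(L)\}$ be the coweight lattice of $L^{\rm ss}$, and let $W_L\subset W$ be the Weyl group of $L$. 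Decompose $X_*(T)_\RR=V_L\oplus V_L^\perp$, where $V_L^\perp$ is the common kernel of the pairings with all roots of $L$. Since $Q^\vee_L\subset V_L$ and the conditions $\langle h_{PW},\alpha\rangle\in\{0,-1\}$, $\alpha\in\Phi^+\cap\Phi(L)$, involve only the $V_L$-component, the $V_L^\perp$-component of any representative is fixed and the problem reduces to the $V_L$-component modulo $Q^\vee_L$. Setting $\eta=-h_{PW}$, the requirement becomes that the projection $\eta_{V_L}$ be $L$-dominant with $\langle\eta_{V_L},\alpha\rangle\le 1$ for every $\alpha\in\Phi^+\cap\Phi(L)$; as the maximum of $\langle\eta_{V_L},\cdot\rangle$ over the positive roots of each simple factor is attained at its highest root, this says exactly that $\eta_{V_L}$ lies in the closed fundamental alcove $\bar A$ of $L^{\rm ss}$.

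For uniqueness I would use that $\bar A$ is a strict fundamental domain for the affine Weyl group $W_L\ltimes Q^\vee_L$ acting on $V_L$, i.e.\ each orbit meets $\bar A$ in exactly one point (a classical property of affine reflection groups). If two admissible representatives $h_{PW},h'_{PW}$ existed, their projections $\eta_{V_L},\eta'_{V_L}$ would both lie in $\bar A$ and differ by the translation $q=\eta_{V_L}-\eta'_{V_L}\in Q^\vee_L\subset W_L\ltimes Q^\vee_L$; the strict fundamental domain property forces $\eta_{V_L}=\eta'_{V_L}$, hence $q=0$ and $h_{PW}=h'_{PW}$.

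For existence, start from an arbitrary $h\in X_*(T)$ and set $\eta_0=-h$. Its projection $(\eta_0)_{V_L}$ lies in $P^\vee_L$, since $\langle(\eta_0)_{V_L},\alpha\rangle=\langle\eta_0,\alpha\rangle\in\ZZ$ for $\alpha\in\Phi(L)$. As $\bar A$ is a fundamental domain, there is $w\in W_L\ltimes Q^\vee_L$ with $w\cdot(\eta_0)_{V_L}\in\bar A$; write $w=t_\mu\bar w$ with $\mu\in Q^\vee_L$ and $\bar w\in W_L$. The key point is that $W_L$ acts trivially on $P^\vee_L/Q^\vee_L$, so $\bar w(\eta_0)_{V_L}\equiv(\eta_0)_{V_L}\pmod{Q^\vee_L}$ and hence $w\cdot(\eta_0)_{V_L}\equiv(\eta_0)_{V_L}\pmod{Q^\vee_L}$. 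Thus the alcove representative equals $(\eta_0)_{V_L}-q'$ for some $q'\in Q^\vee_L$, and $h_{PW}:=h+q'\in X_*(T)$ lies in the same class as $h$ and satisfies the required inequalities.

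The only genuinely delicate point is the existence step, and more precisely the fact that $W_L$ acts trivially on $P^\vee_L/Q^\vee_L$: this is what lets one replace the affine Weyl element produced by the fundamental-domain property by a pure $Q^\vee_L$-translation, which is essential since $h$ may only be modified inside $X_*(T\cap L^{\rm ss})$. When $L^{\rm ss}$ is not simple everything splits as a product over the simple factors of $\Phi(L)$ — the spaces $V_L$, the lattices $Q^\vee_L$, the alcove $\bar A$ and the group $P^\vee_L/Q^\vee_L$ all decompose accordingly — so it suffices to invoke these two standard facts (the strict fundamental domain property of the closed alcove and the triviality of the $W_L$-action on $P^\vee_L/Q^\vee_L$) factorwise.
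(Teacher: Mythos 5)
The paper never proves this lemma: it is quoted as a result of Peterson--Woodward with a pointer to \cite[Lemma~1]{Woodward:compare}, so there is no internal proof to compare your argument against. Your proof is correct and self-contained. The reduction is sound: the defining conditions only see the component of $h_{PW}$ along $V_L$ (since $V_L^\perp$ is the common kernel of $\Phi(L)$), while the coset only allows modifications inside $Q^\vee_L\subset V_L$, so both existence and uniqueness are questions about $V_L$ alone. The translation of the condition $\langle h_{PW},\alpha\rangle\in\{0,-1\}$ into $\eta_{V_L}\in\bar A\cap P^\vee_L$ is right (for a dominant element, the pairing with any positive root of a simple factor is bounded by the pairing with that factor's highest root), and the two classical facts you invoke are exactly what is needed: strictness of the closed alcove as a fundamental domain for $W_L\ltimes Q^\vee_L$ gives uniqueness, and triviality of the $W_L$-action on $P^\vee_L/Q^\vee_L$ (each simple reflection changes an element of $P^\vee_L$ by an integer multiple of a coroot) is precisely the point that lets you convert the affine Weyl group element into a pure $Q^\vee_L$-translation, which is the only kind of move the coset permits. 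The factorwise treatment of a non-simple $L^{\rm ss}$ is also fine. This alcove-theoretic argument is, in substance, the standard proof of the Peterson--Woodward lifting lemma, so nothing is lost relative to the cited source.
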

It is easy to check that the standard Iwahori subgroup of
$L_{alg}L^{\rm ss}$ fixes $L_{h_{PW}}$.

\section{GIT for $L_{alg}^{<0}G$ acting on the affine Grassmannian}

\subsection{Fusion product and $L_{alg}^{<0}G$-invariant sections}

We think about $z^{-1}$ as a coordinate on $\PP^1-\{0\}$.
Hence, for $p\in \PP^1-\{0\}$, we have a morphism of evaluation 
$ev_p\,:\,L^{<0}_{alg}G\longto G$, $g(z^{-1})\mapsto g(p)$.

Let $\mvar =\Gra\times (G/B)^3$. 

Recall that we have  fixed three pairwise distinct points $p_1,p_2$ and $p_3$ in
$\PP^1-\{0\}$.

Let $\bl\in\ZZ_{\geq 0}$ and $\lambda_i$ (for
$i=1,\dots,3$) be three dominant characters  of $B$. 
Let $\Li=\Lila$ be the associated line bundle on $\mvar $.\\

\begin{lemma}
\label{lem:vacuasection}
The dual of the Vacua space $V_{\PP^1}(\lambda_1,\lambda_2,\lambda_3,\bl)$ is
isomorphic to the  space of $L_{alg}^{<0}G$-invariant sections of $\Li$
\end{lemma}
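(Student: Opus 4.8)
The plan is to combine the Borel--Weil descriptions of sections on the two factors of $\mvar=\Gra\times(G/B)^3$ with the duality between group-invariants and Lie-algebra-coinvariants. First I would compute $H^0(\mvar,\Li)$ by K\"unneth: since $\Li=\Lila$ is an external tensor product, $H^0(\mvar,\Li)\cong H^0(\Gra,\Li(\bl\Lambda))\otimes\bigotimes_{i=1}^3 H^0(G/B,\Li(\lambda_i))$. The affine Borel--Weil statement recalled above gives $H^0(\Gra,\Li(\bl\Lambda))\cong\kmV(\bl\Lambda)^*$ as a $\kmG$-module, and the classical Borel--Weil theorem identifies $H^0(G/B,\Li(\lambda_i))$ with the dual of $V(\lambda_i)$. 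Thus $H^0(\mvar,\Li)\cong\kmV(\bl\Lambda)^*\otimes V(\lambda_1)^*\otimes V(\lambda_2)^*\otimes V(\lambda_3)^*$, and I would check that this identification is equivariant for $L^{<0}_{alg}G$, which acts on the first factor through $L^{<0}_{alg}G\subset L_{alg}G$ and on the $i$-th remaining factor through the evaluation $ev_{p_i}\colon L^{<0}_{alg}G\to G$.

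Next, since $L^{<0}_{alg}G$ is connected, a section is invariant if and only if it is annihilated by the Lie algebra $\lg\otimes\CC[z^{-1}]$; and because the vacua space is finite dimensional, dualizing converts the space of invariants into the dual of the corresponding coinvariants. Concretely I would obtain $H^0(\mvar,\Li)^{L^{<0}_{alg}G}\cong\big(\kmV(\bl\Lambda)\otimes V(\lambda_1)\otimes V(\lambda_2)\otimes V(\lambda_3)\big)^{*}_{\lg\otimes\CC[z^{-1}]}$, where $\lg\otimes\CC[z^{-1}]$ acts on $\kmV(\bl\Lambda)$ through the loop algebra and on each $V(\lambda_i)$ through evaluation $f\otimes\xi\mapsto f(p_i)\xi$. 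Here one must be careful with the infinite-dimensionality of $\Gra$ and $\kmV(\bl\Lambda)$ and work systematically with restricted duals.

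It then remains to identify these coinvariants with the vacua space. The point $z=0$, carried by $\Gra$, plays the role of a fourth marked point decorated by the level-$\bl$ vacuum module $\kmV(\bl\Lambda)=\kmV(0+\bl\Lambda)$; by propagation of vacua, inserting this trivial-weight point does not change the block, so the four-point coinvariants of $\kmV(\bl\Lambda)\otimes\bigotimes_i\kmV(\lambda_i+\bl\Lambda)$ under $\lg\otimes\mathcal O(\PP^1-\{0,p_1,p_2,p_3\})$ agree with $V_{\PP^1}(\lambda_1,\lambda_2,\lambda_3,\bl)$. Finally, using that each integrable module $\kmV(\lambda_i+\bl\Lambda)$ is generated over the negative modes at $p_i$ by its top $\lg$-module $V(\lambda_i)$, I would trade $\kmV(\lambda_i+\bl\Lambda)$ for $V(\lambda_i)$ and shrink the Lie algebra to $\lg\otimes\CC[z^{-1}]=\lg\otimes\mathcal O(\PP^1-\{0\})$, the compatibility of the two actions being governed by the partial-fraction (strong residue) decomposition of $\mathcal O(\PP^1-\{0,p_1,p_2,p_3\})$ into principal parts at the various points.

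The main obstacle is this last identification: reconciling the single coinvariant description attached to the affine factor $\Gra$ at $0$ with the defining three-point vacua space, i.e.\ proving that the affine insertions at the $p_i$ may be replaced by their finite tops at the cost of adjoining the vacuum module at $0$. Once the equivariance in the Borel--Weil/K\"unneth step and the handling of restricted duals are set up correctly, the remaining arguments are standard bookkeeping.
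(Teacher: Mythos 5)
Your proposal is correct and follows essentially the same route as the paper's proof: the Borel--Weil identification of $H^0(\mvar,\Li)$ with $\kmV(\bl\Lambda)^*\otimes V(\lambda_1)^*\otimes V(\lambda_2)^*\otimes V(\lambda_3)^*$, the duality between invariants and coinvariants (valid thanks to the finite dimensionality of the $V(\lambda_i)$ factors), and the identification of the coinvariants under $\lg\otimes{\mathcal O}(\PP^1-\{0\})$ with the three-point vacua space. The only difference is that the step you single out as the main obstacle --- propagation of vacua, trading each $\kmV(\lambda_i+\bl\Lambda)$ for its top $V(\lambda_i)$ --- is handled in the paper by a direct citation of \cite[Corollary~2.5]{Beauville:fusion} rather than argued from scratch.
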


\begin{proof}
By \cite[Corollary~2.5]{Beauville:fusion}, the Vacua space is
isomorphic to 
   $$
\bigg(\kmV(\bl\Lambda)\otimes
V(\lambda_1)\otimes V(\lambda_2)\otimes V(\lambda_3)
\bigg)_{\lg\otimes{\mathcal
  O}(\PP^1-\{0\})}.
$$
Its dual is the set of $\lg\otimes{\mathcal
  O}(\PP^1-\{0\})$-invariant vectors in 
$\bigg(\kmV(\bl\Lambda)\otimes
V(\lambda_1)\otimes V(\lambda_2)\otimes V(\lambda_3)
\bigg)^*$.
Since $V(\lambda_1)\otimes V(\lambda_2)\otimes V(\lambda_3)$ is finite
dimensional, $\bigg(\kmV(\bl\Lambda)\otimes
V(\lambda_1)\otimes V(\lambda_2\otimes V(\lambda_3)
\bigg)^*=\kmV(\bl\Lambda)^*\otimes
V(\lambda_1)^*\otimes V(\lambda_2)^*\otimes V(\lambda_3)^*$.
This space is isomorphic to $H^0(\mvar,\Li)$. The lemma follows.
\end{proof}

\subsection{Convex numerical function}
\label{sec:cvxe}

Let $E$ be a finite dimensional real vector space and let $E^*$ denote
its dual space.
Let $\mu\,:\,E^*\longto \RR$ be a function. It is said to be {\it positively
homogeneous} if $\mu(t\varphi)=t\mu(\varphi)$ for any $\varphi\in E^*$
and any nonnegative real number $t$.
The positively homogeneous function $\mu$ is said to be {\it convex}
if 
$$
\forall\varphi,\psi\in E^*\qquad\mu(\varphi+\psi)\geq \mu(\varphi)+\mu(\psi).
$$

\begin{NB}
Pay attention to our convention which is nonstandard in convex
analysis. Our convention is that  of toric geometry.
\end{NB}

To any positively homogeneous convex function $\mu$ is associated the
compact convex set 
$$
C_\mu=\{x\in E\,:\,\forall\varphi\in E^*\quad \varphi(x)\geq\mu(\varphi)\}.
$$
The correspondance $\mu\mapsto C_\mu$ is bijective since, by
Hahn-Banach's  theorem 
$$
\mu(\varphi)=\inf_{x\in C_\mu}\varphi(x).
$$
The function $\mu$ is said to be {\it piecewise linear} if there
exists a fan $\Sigma$ in $E^*$ such that the restrictions of $\mu$ to its maximal
cones are linear.
Observe that $\mu$ is piecewise linear if and only if $C_\mu$ is
polyhedral. In this case, to any maximal cone $\sigma$ in $\Sigma$ we
associate $x_\sigma$ which is the unique point in $C_\mu$ such that
$\varphi(x_\sigma)=\mu(\varphi)$ for any $\varphi\in\sigma$.
 Then, $C_\mu$ is the convex hull of the points $x_\sigma$. 
Dually, $C_\mu$ is the set of $x\in E$ such that
$\varphi(x)\geq\mu(\varphi)$ for any $\varphi$ on a ray of $\Sigma$.

The point $0$ belongs to $C_\mu$ if and only if $\mu(\varphi)\leq 0$
for any $\varphi\in E^*$.
Fix a scalar product $(\,,\,)$ on $E$ and hence on $E^*$. 
We denote by $\Vert\,\Vert$ the associated norm.
Then $0$ does not belong to $C_\mu$ if and only if
$\sup_{\Vert\varphi\Vert=1}\mu(\varphi)>0$. 
In this case, this sup is reached for a unique $\varphi_0\in E^*$ such
that    $\Vert\varphi_0\Vert=1$.
Consider the orthogonal projection $x_0\in C_\mu$ of $0$ on
$C_\mu$. Then
$\varphi_0=\frac 1{\Vert x_0\Vert} (x_0,\square)$ and
$\mu(\varphi_0)$ is the distance from $0$ to the convex $C_\mu$.
Moreover  $\varphi_0\in E^*$ is characterized
by the following properties:
\begin{enumerate}
\item $\Vert\varphi_0\Vert=1$;
\item $\mu(\varphi_0)x_0$ belongs de $C_\mu$, where $x_0$ is given by
  $(x_0,\square)=\varphi$.
\end{enumerate}

\subsection{Numerical semistability}

\label{sec:defxss}

Let $\monx\in \mvar $. Observe that the closure $\overline{T.x}$ of the orbit
$T.x$ is a finite dimensional projective variety. 
Let $\tau$ be a one parameter subgroup  of $T$.
Consider $\monz=\lim_{t\to 0}\tau(t)\monx$. Recall from \cite{GIT}
that 
$\mu^\Li(x,\tau)\in\ZZ$ is characterized by 
$\tau(t).\tilde\monz=t^{-\mu^\Li(x,\tau)}$ for any $t\in\CC^*$ and any
$\tilde\monz$ in the fiber $\Li_\monz$ over $\monz$ in $\Li$.
The map $\tau\longmapsto \mu^\Li(x,\tau)$ extends uniquely to a
continuous, positively homogeneous map from $X_*(T)_\RR$ to $\RR$.
This extension, still denoted by $\mu$, is convex. 
\\

\begin{defin}
  The point $\monx\in \mvar $ is said to be {\it numericaly semistable relatively to
    $\Li$} if for any $g\in L_{alg}^{<0}G$ and any dominant one parameter
  subgroup $\tau$ of $T$, we have
$$
\mu^\Li(gx,\tau)\leq 0.
$$
\end{defin}

Let $\mvar^{\rm nss}(\Li)$ denote the set of numericaly semistbale
points in $\mvar$. A point that is not semistable is said to be unstable.

Consider the set 
$
\cone^{\rm nss}(\mvar)$ of $(\lambda_1,\lambda_2,\lambda_3,\bl)$ in
$(X^*_\QQ(T))^3\times\QQ$
such that there exists $k>0$ satisfying
\begin{enumerate}
\item $k\lambda_1,k\lambda_2,k\lambda_3$ are dominant integral weights
  and $k\bl\in\ZZ_{> 0}$;
\item $\frac{\lambda_1}\bl$, $\frac{\lambda_2}\bl$ and $\frac{\lambda_3}\bl$ belong to the
  alcove $\alc^*$;
\item $\mvar^{\rm
    nss}(\Li(k\bl\Lambda)\otimes\Li(k\lambda_1)\otimes\Li(k\lambda_2)\otimes
  \Li(k\lambda_3))$ is not empty.
\end{enumerate}

Our main statement can be formulated in terms of numerical
semistability as follows. 

\begin{theo}
 \label{th:mainGIT}
 Let $(\lambda_1,\lambda_2,\lambda_3)\in(X^*(T)^+_\QQ)^3$ and $\bl\in
 \ZZ_{>0}$ such that $\frac{\lambda_1}\bl$, $\frac{\lambda_2}\bl$ and $\frac{\lambda_3}\bl$ belong to the
  alcove $\alc^*$. 
Then $(\lambda_1,\lambda_2,\lambda_3,\bl)\in \cone^{\rm nss}(\mvar)$  if and only
if
\begin{eqnarray}
  \label{eq:mainGIT}
   \langle w_1\varpi_{\beta^\vee},\lambda_1\rangle+
\langle w_2\varpi_{\beta^\vee},\lambda_2\rangle+
\langle w_3\varpi_{\beta^\vee},\lambda_3\rangle\leq\frac 2
{(\beta,\beta)} \bl d.
\end{eqnarray}
 for any simple root $\beta$,
any nonnegative integer $d$ and any $(w_1,w_2,w_3)\in (W^{P_\beta})^3$ such that 
\begin{eqnarray}
GW(w_1,w_2,w_3;d\sigma_{s_\beta}^*)=1,
\end{eqnarray}
and for any $\chi\in X^*(Z)$
\begin{eqnarray}
  \label{eq:filtrationGIT}
  \sharp\Phi(w_1,\chi)+\sharp\Phi(w_2,\chi)+\sharp \Phi(w_3,\chi)+
\sum_{\alpha\in \Phi(G/P,\chi)}\langle h,\alpha\rangle
=2 \sharp \Phi(G/P_\beta,\chi).
\end{eqnarray}
\end{theo}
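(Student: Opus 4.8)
The plan is to realise $\cone^{\rm nss}(\mvar)$ as a GIT-cone for the ind-group $L^{<0}_{alg}G$ acting on $\mvar=\Gra\times(G/B)^3$ and to read off its facets from the theory of well-covering pairs. First I would rephrase numerical semistability through the Hilbert--Mumford criterion: a point $\monx\in\mvar$ is unstable for $\Li$ precisely when there exist $g\in L^{<0}_{alg}G$ and a dominant one parameter subgroup $\tau$ of $T$ with $\mu^\Li(g\monx,\tau)>0$. Via Lemma~\ref{lem:vacuasection} the nonemptiness of $\mvar^{\rm nss}(\Li)$ for $\Li=\Lila$ is exactly the condition defining $\cone^{\rm nss}(\mvar)$, so the cone is the intersection of the half-spaces $\{\mu^\Li(\cdot,\tau)\leq 0\}$ attached to the destabilising data $(\tau,C)$, where $C$ ranges over the irreducible components of the fixed locus $\mvar^\tau$ and $\mu^\Li(\cdot,\tau)$ is evaluated at a generic point of the attracting set of $C$.

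The next step is to compute $\mu^\Li$ explicitly. Being additive for tensor products, it splits as the contribution of the $\Gra$ factor carrying $\Li(\bl\Lambda)$ plus those of the three $G/B$ factors carrying the $\Li(\lambda_i)$. On each $G/B$ the Bruhat position of the corresponding point yields a term $\langle w_i\lambda_i,\tau\rangle$; on $\Gra$ the weight of $\Li(\bl\Lambda)$ over the limit point, computed through the Peterson decomposition~\eqref{eq:53} and the defining weight $-\bl\Lambda$ of $\Li(\bl\Lambda)$, yields a term governed by the class $h\in\oplus_{\alpha\in\Delta-\Delta_P}\ZZ\alpha^\vee$ of the fixed component $C_h^L$. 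Up to positive scaling one may take $\tau$ on a ray of the relevant fan; these rays are the fundamental coweights $\varpi_{\beta^\vee}$, so that $P(\tau)=P_\beta$ is a maximal parabolic, $\tau$ contributes the pairings $\langle w_i\varpi_\beta,\lambda_i\rangle$, and the integer $d$ is the coefficient of $h$ along $\beta^\vee$, i.e.\ $h=d\beta^\vee$. This produces exactly the linear form of~\eqref{eq:mainGIT} together with its right hand side $\frac 2 {(\beta,\beta)}\bl d$.

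It then remains to single out the destabilising data producing facets. For this I would apply the general description of GIT-cones: a pair $(\tau,C)$ gives a facet if and only if it is \emph{well-covering}, meaning that the natural map contracting the attracting set of $C$ into $\mvar$ is birational onto its image, equivalently that the induced map on generic tangent spaces is an isomorphism. Translating this transversality through Peterson's Lemma~\ref{lem:degphi}, which matches the loop-group decompositions with degree $d$ maps $\PP^1\longto G/P_\beta$, the multiplicity-one part of the condition becomes the equality $GW(w_1,w_2,w_3;d\sigma_{s_\beta}^*)=1$ in $QH^*(G/P_\beta)$. The remaining part of well-covering, distinguishing it from a merely covering pair, is recorded by the $\chi$-graded balance of inversion sets: for every $\chi\in X^*(Z)$ one gets the equality~\eqref{eq:filtrationGIT}, in which the quantum correction $\sum_{\alpha\in\Phi(G/P,\chi)}\langle h,\alpha\rangle$ arises from the $\Gra$ factor. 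Together these two conditions characterise the well-covering pairs, and hence the facets, which is precisely the refinement of the Teleman--Woodward list asserted by the theorem.

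The hard part will be twofold. First, the whole argument has to be made rigorous in the infinite dimensional setting: $\Gra$ and $L^{<0}_{alg}G$ are ind-objects, so the Hesselink--Kempf stratification, the $\mu$-function and the well-covering formalism must be controlled on finite dimensional Schubert approximations, using the Cartan, Birkhoff and Peterson decompositions to reduce each fixed component to a finite dimensional $C_h^L\cong G/P(h)$. Second, and most delicately, one must show that the Schubert intersection numbers on these fixed components compute the quantum Gromov--Witten invariants of $G/P_\beta$ rather than the ordinary ones; this rests on the Peterson-type identification of the relevant part of the homology of $\Gra$ with $QH^*(G/P_\beta)$, and it is this step that turns the finite dimensional transversality into the quantum conditions~\eqref{eq:mainGIT} and~\eqref{eq:filtrationGIT}.
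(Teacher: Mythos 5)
Your global strategy --- treat $\cone^{\rm nss}(\mvar)$ as a GIT-cone for $L^{<0}_{alg}G$ acting on $\mvar$, compute $\mu^\Li$ via the Peterson decomposition, and extract the facet inequalities from destabilizing pairs whose attracting map is generically one-to-one --- is indeed the strategy of the paper (Lemma~\ref{lem:calculmu}, Proposition~\ref{prop:etaGW}, Lemma~\ref{lem:ineqsat}, Lemma~\ref{lem:TW}). But the two points you defer as ``the hard part'' are not technical footnotes to this plan: they are the content of the theorem, and your proposal contains no mechanism for the one that distinguishes this statement from Teleman--Woodward, namely the filtration condition~\eqref{eq:filtrationGIT}. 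You assert that this condition ``is recorded by the $\chi$-graded balance of inversion sets'' as ``the remaining part of well-covering'', but the well-covering formalism you invoke is a finite-dimensional theory, and nothing in your outline explains how a birationality statement produces one \emph{equality for each character} $\chi\in X^*(Z)$. The paper's actual mechanism is concrete and different: after replacing $h$ by its Peterson--Woodward lifting $h_{PW}$ (Lemma~\ref{lem:PW}), the fiber of $\eta$ over a special point of $C$ is identified (Proposition~\ref{prop:fibrespe}) with a finite-dimensional vector space of polynomial maps
$$
\mcM^\circ_h=\bigoplus_{\chi\in X^*(Z)}\mcM^\circ_{h,\chi},\qquad
\mcM^\circ_{h,\chi}=\bigg\{\sum_{\alpha\in\Phi(G/P,\chi)}P_\alpha\xi_{-\alpha}\,:\,\deg(P_\alpha)\leq\langle h_{PW},\alpha\rangle\bigg\},
$$
cut by incidence conditions at $p_1,p_2,p_3$ that respect this grading. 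Because the generic fiber is a single point while the expected dimension is zero (this is where $GW=1$ enters), the graded codimension inequalities~\eqref{eq:dim10}--\eqref{eq:dim12} are forced to be equalities, and these equalities are exactly~\eqref{eq:filtrationGIT}. Without this (or an equivalent) counting argument, your passage from ``well-covering'' to~\eqref{eq:filtrationGIT} is an assertion of the theorem, not a proof of it.

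A second gap: you never use the hypothesis that $\frac{\lambda_1}\bl$, $\frac{\lambda_2}\bl$, $\frac{\lambda_3}\bl$ lie in the alcove $\alc^*$, yet the statement fails without it and the paper uses it at the decisive moment. The alcove condition is what makes the function $\mu_\eta$ convex (Proposition~\ref{prop:muetaconvex}), which in turn yields the uniqueness of the destabilizing one-parameter subgroup and of the canonical reduction (Proposition~\ref{prop:unicitytau}); this uniqueness is what makes the fiber of $\eta$ over the open stratum a \emph{single} point, hence $GW(w_1,w_2,w_3;d\sigma_{s_\beta}^*)=1$ rather than merely nonzero. Your ``birational onto its image'' condition would have to be established by exactly this convexity argument, so any complete version of your proof must bring the alcove hypothesis into play at that step.
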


\subsection{Degree of numerical instability}

We first compute explicitly $\mu^\Li(\monx,\tau)$ in terms of the
Peterson decomposition.
 
\begin{lemma}
\label{lem:calculmu}
Recall that $\monx\in \mvar $ and $\tau\in X_*(T)$ is dominant.
Let $h\in X_*(T)$ and $w_i\in W$ (for
 $i=1,2,3$) such that $\monx$ belongs to $S_{-h}\times Uw_1^{-1}B/B\times
 Uw_2^{-1}B/B\times Uw_3^{-1}B/B$. 
    Then
$$
\mu^\Li(\monx,\tau)= \bl(h,\tau)+\sum_{i=1}^3\langle w_i\tau,\lambda_i\rangle.
$$
\end{lemma}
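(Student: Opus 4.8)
The plan is to exploit two structural features of the function $\mu^\Li$: its additivity in the line bundle, $\mu^{\Li_1\otimes\Li_2}(\monx,\tau)=\mu^{\Li_1}(\monx,\tau)+\mu^{\Li_2}(\monx,\tau)$, and the fact that on a product variety a bundle pulled back from one factor sees only that factor. Writing $\monx=(x_0,x_1,x_2,x_3)\in\Gra\times(G/B)^3$ and $\Li=\Li(\bl\Lambda)\otimes\Li(\lambda_1)\otimes\Li(\lambda_2)\otimes\Li(\lambda_3)$, these reduce the claim to the four independent computations of $\mu^{\Li(\bl\Lambda)}(x_0,\tau)$ and of $\mu^{\Li(\lambda_i)}(x_i,\tau)$ for $i=1,2,3$. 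Throughout I would first assume $\tau$ dominant and \emph{regular}, and recover the general dominant case at the end by continuity: both sides are continuous and positively homogeneous in $\tau$, the right-hand side is in fact linear, and the regular locus is dense in the dominant cone.

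For each flag-variety factor the computation is the classical one. Since $x_i\in Uw_i^{-1}B/B$, writing $x_i=uw_i^{-1}B/B$ gives $\tau(t)x_i=\bigl(\tau(t)u\tau(t)^{-1}\bigr)w_i^{-1}B/B$, and for $\tau$ dominant regular the conjugate $\tau(t)u\tau(t)^{-1}$ tends to the identity as $t\to0$, every positive root vector being scaled by a positive power of $t$. Hence $\monz_i:=\lim_{t\to0}\tau(t)x_i=w_i^{-1}B/B$, a $T$-fixed point, whose fiber in $\Li(\lambda_i)$ carries the $T$-weight $-w_i^{-1}\lambda_i$. Inserting this into the defining relation $\tau(t)\tilde\monz=t^{-\mu}\tilde\monz$ yields $\mu^{\Li(\lambda_i)}(x_i,\tau)=\langle w_i\tau,\lambda_i\rangle$.

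The affine factor is handled in the same spirit and is the main obstacle. Here $x_0\in S_{-h}=L_{alg}U\,L_{-h}$, and the identity $\tau(t)x_0=\bigl(\tau(t)g\tau(t)^{-1}\bigr)L_{-h}$ for $g\in L_{alg}U$ again contracts, now because every positive root subgroup of the loop group is scaled by a positive power of $t$ uniformly in $z$; thus $\monz_0:=\lim_{t\to0}\tau(t)x_0=L_{-h}$, exactly as in the characterization $S_{-h}=\{x:\lim_{t\to0}\rho^\vee(t)x=L_{-h}\}$. What remains is the $\tau$-weight on the fiber $\Li(\bl\Lambda)_{L_{-h}}$. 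The base point carries the weight $-\bl\Lambda$, and $L_{-h}$ is the image of the base point under the corresponding affine Weyl translation, so by $\kmG$-equivariance the fiber weight is the image of $-\bl\Lambda$ under the translation recalled in Section~\ref{sec:Waff}, namely $-\bl\bigl(\Lambda+(h,\square)-\frac{1}{2}(h,h)\delta\bigr)$. The point is then to restrict this character of $\hat T$ to the subtorus $T$: since both $\Lambda$ and $\delta$ annihilate $\Lie(T)$, the level term and the quadratic $\frac{1}{2}(h,h)$ term drop out, leaving $-\bl(h,\square)$, which pairs with $\tau$ to $-\bl(h,\tau)$; the defining relation then gives $\mu^{\Li(\bl\Lambda)}(x_0,\tau)=\bl(h,\tau)$.

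Summing the four contributions gives $\mu^\Li(\monx,\tau)=\bl(h,\tau)+\sum_{i=1}^3\langle w_i\tau,\lambda_i\rangle$ on the dense regular locus, and the continuity argument of the first paragraph extends it to all dominant $\tau$, proving the lemma. The only genuinely delicate bookkeeping is the sign in the affine translation step: one must match the convention under which the cocharacter $-h$ indexes the fixed point $L_{-h}$ with the convention of Section~\ref{sec:Waff} for the translation action on $\hat\lh^*$, and verify that the surviving restricted term is $-\bl(h,\square)$ rather than its negative, so that the contribution is $+\bl(h,\tau)$.
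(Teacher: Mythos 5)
Your proof is correct and takes essentially the same route as the paper's: both arguments reduce to computing the $T$-weight of the fiber over the relevant $T$-fixed points — $-w_i^{-1}\lambda_i$ over $w_i^{-1}B/B$ for the flag factors, and, via the affine translation formula of Section~\ref{sec:Waff}, the restriction $-\bl(h,\square)$ over $L_{-h}$ for the affine factor — and then sum the contributions. The only difference is presentational: the paper leaves implicit the passage to the limit point and the case of non-regular dominant $\tau$, which you handle explicitly by first assuming $\tau$ regular and then invoking the continuity and positive homogeneity of $\mu^\Li(\monx,\cdot)$ stated just before the lemma.
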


\begin{proof}
 The group $T$ acts on the fiber over $B/B$ in $\Li(\lambda_i)$ with
 weight $-\lambda_i$.
It follows that it acts on the fiber over $w_i^{-1}B/B$ in $\Li(\lambda_i)$ with
 weight $-w_i^{-1}\lambda_i$.

Similarly, $\hat\lh$ acts on  the fiber over the base point of $\Gra$
in $\Li(\Lambda)$
with weight $-\Lambda$.
Then $\hat\lh$ acts on the fiber over $L_{-h}$ in $\Li(\bl\Lambda)$ by
the weight $-\bl T_{-h}(\Lambda)$ (with notation of Section~\ref{sec:Waff}).
 But $-\bl T_{-h}(\Lambda)=\bl\Lambda-(\bl h,\square)-\frac \bl 2
 (h,h)\delta$ and $T$ acts on the fiber over $L_{-h}$ in
 $\Li(\bl\Lambda)$ by
weight $-\bl(h,\square)$.
The lemma follows.
\end{proof}

\bigskip
We set
$$
M^\Li(x)=\sup_{
  \begin{array}{l}
    \tau\in X_*(T)_\RR^+{\rm\ nontrivial}\\
g\in L^{<0}_{alg}G
  \end{array}
}\frac{\mu^\Li(gx,\tau)}{\Vert\tau\Vert}.
$$

\begin{prop}\label{prop:Mbiendef}
  Assume that $\monx$ is not numericaly  semistable. Then $M^\Li(\monx)$ is finite and
  there exist $g\in L^{<0}_{alg}G$ and $\tau\in X_*(T)^+$ nontrivial such
  that
$$M^\Li(\monx)=\frac{\mu^\Li(g\monx,\tau)}{\Vert\tau\Vert}.
$$
\end{prop}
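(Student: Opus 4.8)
The plan is to prove Proposition~\ref{prop:Mbiendef} by reducing the apparent infinite-dimensional supremum over $g\in L^{<0}_{alg}G$ and $\tau\in X_*(T)^+_\RR$ to a finite, tractable optimization, exploiting the Peterson decomposition and the explicit formula for $\mu^\Li$ from Lemma~\ref{lem:calculmu}. First I would fix $\monx\in\mvar$ unstable and observe that by the Birkhoff decomposition, any $g\monx\in\mvar$ lies, in its $\Gra$-component, in some stratum $S_{-h}$ for $h\in X_*(T)$, while its three flag-variety components lie in Schubert cells $Uw_i^{-1}B/B$. By Lemma~\ref{lem:calculmu}, for such a point the weight function is
$$
\mu^\Li(g\monx,\tau)=\bl(h,\tau)+\sum_{i=1}^3\langle w_i\tau,\lambda_i\rangle,
$$
which depends on $g$ only through the discrete data $(h,w_1,w_2,w_3)$. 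Thus the genuinely continuous variable is $\tau$, and for each fixed tuple of discrete data the quantity $\mu^\Li(g\monx,\tau)/\Vert\tau\Vert$ is a ratio of a linear form in $\tau$ to the norm, whose supremum over the dominant cone is finite and attained (the linear form, restricted to the compact sphere intersected with the closed dominant chamber, reaches its maximum).

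The main step is then to show that only finitely many discrete tuples $(h,w_1,w_2,w_3)$ can actually occur, or at least that they contribute a bounded value, so that $M^\Li(\monx)$ is a supremum of finitely many attained maxima and is itself finite and attained. For this I would argue that as $g$ ranges over $L^{<0}_{alg}G$, the component $g\monx$ of $\Gra$ ranges within a fixed $L^{<0}_{alg}G$-orbit closure, and the relevant strata $S_{-h}$ meeting this orbit are controlled by the Cartan/Birkhoff ordering $\leq$: only $h$ with $h\le h_0$ (for the $h_0$ determined by the Birkhoff stratum of the $\Gra$-component of $\monx$) intersect the orbit, and among these the linear form $(h,\tau)$ is bounded above on the dominant cone because $\tau\mapsto(h,\tau)$ decreases under the dominance order on the dominant side. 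Combined with the boundedness of $\sum_i\langle w_i\tau,\lambda_i\rangle$ (only finitely many $w_i\in W$), this gives a uniform upper bound for $\mu^\Li(g\monx,\tau)/\Vert\tau\Vert$, whence $M^\Li(\monx)<\infty$.

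To obtain attainment I would invoke the convexity theory of Section~\ref{sec:cvxe}: the function $\tau\mapsto\mu^\Li(g\monx,\tau)$, extended from $X_*(T)$ to $X_*(T)_\RR$, is continuous, positively homogeneous, and convex, and instability says the associated convex set $C_\mu$ does not contain $0$. By the Hahn--Banach analysis recalled there, $\sup_{\Vert\tau\Vert=1}\mu(\tau)>0$ is attained at a unique $\tau_0$ with $\Vert\tau_0\Vert=1$, realized as the direction of the orthogonal projection of $0$ onto $C_\mu$. Taking the maximum of these over the finitely many admissible $g$-strata produces the desired $g\in L^{<0}_{alg}G$ and nontrivial dominant $\tau\in X_*(T)^+$ achieving $M^\Li(\monx)$.

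The hard part will be justifying the finiteness of the admissible discrete data rigorously: one must argue that, although $L^{<0}_{alg}G$ is infinite-dimensional, the stratification-theoretic constraints (which $S_{-h}$ meet a fixed orbit, via~\eqref{eq:Shbar} and the Birkhoff ordering) genuinely bound $h$ from above on the dominant side, and that the instability hypothesis forces the maximizing $\tau$ into a region where this upper bound is effective. Controlling the interaction between the $\Gra$-component's orbit and the three flag components simultaneously—so that the supremum is both finite and attained at an honest point of $L^{<0}_{alg}G\times X_*(T)^+$—is the crux, and I expect it to lean essentially on Lemma~\ref{lem:degphi} and the Peterson decomposition of Theorem~\ref{th:decPeterson} to pin down which tuples $(h,w_1,w_2,w_3)$ arise.
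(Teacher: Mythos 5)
Your finiteness argument follows the paper's own route: use Lemma~\ref{lem:calculmu} to reduce everything to the discrete data $(h,w_1,w_2,w_3)$, and bound the admissible $h$ by the Birkhoff stratum $\Gra^{h_0}$ containing the $\Gra$-component of $\monx$. (The paper's precise bound, proved via \eqref{eq:GrahBmoins}, \eqref{eq:BmoinsBB} and \eqref{eq:Shbar}, is $h\leq -w_0h_0$ rather than your $h\leq h_0$, but that is cosmetic.) Since the $w_i$ range over the finite group $W$ and $(h,\tau)\leq(-w_0h_0,\tau)$ for dominant $\tau$, this correctly gives $M^\Li(\monx)<\infty$, exactly as in the paper.

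The genuine gap is in the attainment step. Your argument rests on there being ``finitely many admissible $g$-strata'', i.e.\ finitely many tuples $(h,w_1,w_2,w_3)$ occurring along the orbit $L^{<0}_{alg}G\monx$; this is false. The constraint $h\leq -w_0h_0$ cuts out an \emph{infinite} subset of the coroot lattice, and infinitely many of these $h$ genuinely occur: already for $G=\SL_2$ and $\monx$ with $\Gra$-component $L_0$ (so $h_0=0$), the elements $g_n=\bigl(\begin{smallmatrix}1&0\\ z^{-n}&1\end{smallmatrix}\bigr)\in L^{<0}_{alg}G$ satisfy $g_nL_0\in S_{n\alpha^\vee}$, so $h=-n\alpha^\vee$ occurs for every $n\geq 0$. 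Hence $M^\Li(\monx)$ is a supremum of \emph{infinitely} many (individually attained, by your convexity argument, which is fine but was never the issue) maxima, and no finite-max shortcut yields a maximizer. The paper closes exactly this gap by a different mechanism: take a maximizing sequence $(h_n,w_i^n,\tau_n)$, extract a subsequence with the $w_i^n$ constant and $\tau_n\to\tau_0$ on the unit sphere of $X_*(T)^+_\RR$, and then observe that for the \emph{fixed} dominant direction $\tau_0$ the function $h\mapsto(h,\tau_0)$ takes only finitely many values above any given threshold on $\{h\leq -w_0h_0\}$ (because each $(\alpha^\vee,\tau_0)\geq 0$ and the $h$ lie in a lattice); so the sequence of values is stationary and an honest maximizing tuple $(h,w_1,w_2,w_3,\tau_0)$ exists. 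Finally, the proposition asserts $\tau\in X_*(T)^+$, an integral one-parameter subgroup: the paper gets this from a last step showing that the supremum restricted to the face of the dominant chamber containing $\tau_0$ is attained on a rational ray, so $\tau_0$ is proportional to a rational, hence after scaling integral, $\tau$. Your proposal never addresses this rationality point; it would in fact follow from your projection description of the optimal direction onto a rational polyhedron, but it needs to be said, and in any case it cannot rescue the false finiteness claim on which your attainment argument rests.
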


\begin{proof}
   Let $h_0\in X_*(T)^+$ such that the projection of  $\monx$ on
   $\Gra$ belongs to $\Gra^{h_0}$.
Let $h\in X_*(T)$ such that $S_{-h}\cap \Gra^{h_0}\neq \emptyset$. 
We claim that 
$$
h\leq -w_0h_0.
$$
Let $y\in S_{-h}\cap \Gra^{h_0}$.
By formula~\eqref{eq:GrahBmoins}, there exists $w\in W$ such that $y\in
\Iw^-L_{wh_0}$. Then, by~\eqref{eq:BmoinsBB}
$\lim_{t\to\infty}(\delta^\vee+\rho^\vee)(t)y=L_{wh_0}$.
Since $S_{-h}$ is $\hat T$-stable, $L_{wh_0}$ belongs to
$\overline{S_{-h}}$.
By~\eqref{eq:Shbar}, this implies that $wh_0\leq -h'$.
Since $h_0$ is dominant  $w_0h_0\leq wh_0$.
The claim follows.\\

Denote by
$
\Theta$ the set of $(-h,w_1^{-1},w_2^{-1},w_3^{-1})\in X_*(T)\times
W^3$ such that 
$$ L^{<0}_{alg}G.\monx\cap
(S_{-h}\times U
w_1^{-1}B/B\times Uw_2^{-1}B/B\times Uw_3^{-1}B/B)\neq\emptyset.
$$
By Lemma~\ref{lem:calculmu}, we have
$$
M^\Li(\monx)=\sup_{
  \begin{array}{l}
    \tau\in X_*(T)_\RR^+{\rm\ s.t.\ }\Vert\tau\Vert=1\\
(-h,w_1^{-1},w_2^{-1},w_3^{-1})\in \Theta
  \end{array}
}\bl(h,\tau)+\sum_{i=1}^3\langle w_i\tau,\lambda_i\rangle.
$$
For such a $h$, the claim asserts that $h\leq -w_0h_0$. In particular,
for any dominant $\tau\in X_*(T)_\RR$, we have 
$(h,\tau)\leq (-w_0h_0,\tau)$.
It follows that $M^\Li(\monx)$ is finite.\\

By the above argument, there exists  sequences $
(-h_n,(w^n_1)^{-1},(w^n_2)^{-1},(w^n_3)^{-1})\in \Theta$ 
and $\tau_n\in X_*(T)_\RR$ such that 
$$
\lim_{n\to\infty}\bl(h_n,\tau)+\sum_{i=1}^3\langle
w^n_i\tau_n,\lambda_i\rangle=
M^\Li(\monx)
\quad{\rm and}\quad \Vert\tau_n\Vert=1.
$$
By extracting a subsequence, one may assume that each $w_i^n$ is
constant (equal to $w_i$) and that $\tau_n$ tends to $\tau_0\in
X_*(T)_\RR^+$. Then, $(-h_n,w_1^{-1},w_2^{-1},w_3^{-1})\in \Theta$,
$\Vert\tau_0\Vert=1$ and
$$
\lim_{n\to\infty}\bl(h_n,\tau_0)+\sum_{i=1}^3\langle
w_i\tau_0,\lambda_i\rangle=
M^\Li(\monx).
$$
Set $M'=\frac 1 \bl (M^\Li(\monx)-\sum_{i=1}^3\langle
w_i\tau_0,\lambda_i\rangle-1)$.
Since $\tau_0$ is dominant, for $h\leq -w_0h_0$,  the function $h\mapsto (h,\tau_0)$ takes
only finitely many values greater than $M'$.
Hence the sequence $\bl(h_n,\tau_0)+\sum_{i=1}^3\langle
w_i\tau_0,\lambda_i\rangle$ is stationary and there exists
$(-h,w_1^{-1},w_2^{-1},w_3^{-1})\in \Theta$ such that 
$$
M^\Li(\monx)=\bl(h,\tau_0)+\sum_{i=1}^3\langle
w_i\tau_0,\lambda_i\rangle.
$$

\bigskip
Let $\Face$ be the face of $X_*(T)_\RR^+$ containing $\tau_0$ in its
relative interior. 
Then 
$$
M^\Li(\monx)=\sup_{\tau\in \Face{\rm\ s.t.\ }\Vert\tau\Vert=1}
\bl(h,\tau_0)+\sum_{i=1}^3\langle
w_i\tau_0,\lambda_i\rangle.
$$
Since the linear form $\tau \mapsto \bl(h,\tau_0)+\sum_{i=1}^3\langle
w_i\tau_0,\lambda_i\rangle$ and the cone $\Face$ are rational, this
supremum is reached on a rational half line.  It follows that
$\tau_0=\frac{\tau_0'}{\Vert\tau_0'\Vert}$ for some rational $\tau_0'$. 
\end{proof}

\subsection{Relation with parabolic bundles}
\label{sec:parb}

A {\it flagged bundle} $(\parE,\xi_1,\xi_2,\xi_3)$ on $\PP^1$ at the
three marked points $p_1$, $p_2$ and $p_3$ is the given of a principal
$G$-bundle $\parE$ on $\PP^1$ and three parabolic reductions
$\xi_i\in\parE_{p_i}/B$ at the three points $p_i$.  
Let us recall how to associate to any point of $\mvar$ a flagged
bundle on $\PP^1$. 
Assume that $\{p_1,p_2,p_3\}\cap\{0,\infty\}$ is empty.
Let $\iota_0\,:\,\CC\longto \PP^1, z\longmapsto [z,1]$ and 
$\iota_\infty\,:\,\CC\longto \PP^1, z\longmapsto [1,z]$ denote the two
open embeddings. 
Their images cover $\PP^1$, and for any $z\in\CC^*$,
$\iota_0(z)=\iota_\infty(z^{-1})$.
Let $g\in L_{alg}G$. Thinking about $g$ as a transition function on
$\iota_0(\CC)\cap\iota_\infty(\CC)$, we get a principal $G$-bundle
$\parE$ with two trivializations $\tilde\iota_0\,:\,\CC\times G\longto
\parE$ over $\PP^1-\{\infty\}$ and 
$\tilde\iota_\infty\,:\,\CC\times G\longto
\parE$ over $\PP^1-\{0\}$.
Moreover, for any $z\in\CC^*$ and $h\in G$, we have
$$
\tilde\iota_0(z,h)=\tilde\iota_\infty(z^{-1},g(z)h).
$$
Consider also the two sections
$\sigma_0$ and $\sigma_\infty$ defined respectively on
$\PP^1-\{\infty\}$ and $\PP^1-\{0\}$ by
$$
\sigma_0(\iota_0(z))=\tilde\iota_0(z,e)\quad{\rm and}\quad
\sigma_\infty(\iota_\infty(z))=\tilde\iota_\infty(z,e),
$$ 
for any $z\in\CC$.
The map $g\longmapsto(\parE,\sigma_0,\sigma_\infty)$ is a bijection
from $L_{alg}G$ to the set of principal bundles on $\PP^1$ endowed
with two sections. 

Let $g_1\in L_{alg}^{<0}G$ and $g_2\in L_{alg}^{>0}G$. 
Let $\parE'$, $\tilde\iota_0'$, $\tilde\iota_\infty'$, $\sigma_0'$ and 
$\sigma_\infty'$ be as above when $g$ is replaced by $g_1gg_2^{-1}$.
Then, there exits an isomorphism $\Theta\,:\,\parE\longto\parE$ of
principal $G$-bundles such that
$$
\Theta(\tiota_0(z,h))=\tiota_0'(z,g_2(z)h)\quad{\rm and}\quad
\Theta(\tiota_\infty(z,h))=\tiota_\infty'(z,g_1(z^{-1})h),
$$
for any $z\in\CC$ and $h\in G$.
Moreover
$$
\Theta\circ\sigma_0\circ\iota_0=\sigma_0'\circ\iota_0. g_2 \quad{\rm and}\quad
\Theta\circ\sigma_\infty\circ\iota_\infty=\sigma_\infty'\circ\iota_\infty
g_1.
$$
In other words, $g_1gg_2^{-1}$ corresponds to
$(\parE,\sigma_0g_2^{-1},\sigma_\infty g_1^{-1})$.

Now $\Gra=L_{alg}G/L_{alg}^{>0}G$ corresponds to the set of pairs
$(\parE,\sigma_\infty)$.
Let 
$$\monx=(gL_{alg}^{>0}G/L_{alg}^{>0}G,g_1B/B,g_2B/B,g_3B/B)\in\mvar.$$
Let $(\parE,\sigma_\infty)$ corresponding to $ gL_{alg}^{>0}G/L_{alg}^{>0}G$.
Consider, for any $i=1,2,3$,  the point $\sigma_\infty(p_i)g_iB/B$ in
$\parE_{p_i}/B$; it is a parabolic reduction $\xi_i$ at $p_i$. One checks that 
two points $\monx$ and $\monx'$ in $\mvar$ induces the same flagged
bundle $(\parE,\xi_1,\xi_2,\xi_3)$ if and only if they belong to the
same $L_{alg}^{<0}G$-orbit.

The given of $\monx\in\mvar$ also determines a section
$\sigma_\infty\,:\,\PP^1-\{0\}\longto\parE$. This section induces a
section $\PP^1-\{0\}\longto\parE/B$ that extends  to a
parabolic reduction $\bar\sigma\,:\,\PP^1\longto \parE/B$.\\

\noindent{\bf Parabolic degree.}
Recall that  $(\lambda_1,\lambda_2,\lambda_3)\in(X^*(T))^3$,
$\bl\in\ZZ_{\geq 0}$  and $\Li=\Lila$ on $\mvar$. 
Let us explain how $\mu^\Li(\monx,\tau)$ can be expressed in terms
of the flagged bundle $(\parE,\xi_1,\xi_2,\xi_3)$ endowed with the
parabolic reduction $\bar\sigma$.

Fix $i=1,2$ or $3$. Both $\bar\sigma(p_i)$ and $\xi_i$ belong to
$\parE_{p_i}/B$. 
Fixing an identification $\parE_{p_i}\simeq G$ (which is equivariant
for the right $G$-actions), the pair
$(\bar\sigma(p_i),\xi_i)$ gives a point in $G/B\times G/B$.
The $G$-orbit of this point does not depend on the chosen
identification $\parE_{p_i}\simeq G$; in particular, it belongs to
$G.(B/B,w_i^{-1}B/B)$ for some well defined  $w_i\in W$.

Consider $(\tau,\square)\in X^*(T)$.
The parabolic reduction $\bar\sigma$ induces a principal $B$-bundle
$\parE_B$.
We denote by  $\CC_\tau$ the one-dimensional representation of $B$
associated to the character $(\tau,\square)$ of $B$.
We can define the line bundle $\parE_B\times_B\CC_{\tau}$ on
$\PP^1$. Its degree $\deg(\parE_B\times_B\CC_{\tau})$ belongs to $\ZZ$.

The {\it parabolic degree relatively to $\Li$} is defined by 
\begin{eqnarray}
  \label{eq:defpardeg}
  \pardeg(\parE, \xi_1,\xi_2,\xi_3,\bar\sigma,\tau)=\bl\deg(\parE_B\times_B\CC_{\tau})+\sum_{i=1}^3\langle w_i^{-1}\lambda_i,\tau\rangle.
\end{eqnarray}
\begin{lemma}\label{lem:pardegmu}
  With above notation, we have
$$
\mu^\Li(\monx,\tau)=\pardeg(\parE, \xi_1,\xi_2,\xi_3,\bar\sigma,\tau).
$$
\end{lemma}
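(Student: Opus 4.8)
The plan is to compute both sides of the asserted identity in terms of the transition data $g\in L_{alg}G$ and the chosen representatives, and verify they agree. The right-hand side $\pardeg$ is defined via two pieces: the degree $\deg(\parE_B\times_B\CC_\tau)$ of a line bundle on $\PP^1$ attached to the parabolic reduction $\bar\sigma$, and the contributions $\sum_i\langle w_i^{-1}\lambda_i,\tau\rangle$ coming from the marked points. The left-hand side $\mu^\Li(\monx,\tau)$ was computed in Lemma~\ref{lem:calculmu} to equal $\bl(h,\tau)+\sum_{i=1}^3\langle w_i\tau,\lambda_i\rangle$ when $\monx$ lies in $S_{-h}\times\prod_i Uw_i^{-1}B/B$. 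So the strategy reduces to matching these two explicit expressions term by term.

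First I would reconcile the two descriptions of the $w_i$'s. In Lemma~\ref{lem:calculmu} the index $w_i$ records the $U$-cell $Uw_i^{-1}B/B$ containing the $i$-th flag; in the definition of $\pardeg$ the element $w_i$ is read off from the relative position $G.(B/B,w_i^{-1}B/B)$ of the pair $(\bar\sigma(p_i),\xi_i)$. Since $\bar\sigma$ is the section coming from $\sigma_\infty$ (hence from the $\Gra$-factor, whose flag at $p_i$ sits in the standard position $B/B$ after the chosen trivialization) and $\xi_i$ is $\sigma_\infty(p_i)g_iB/B$, these two conventions give the same $w_i$; I would check this identification carefully, as a sign or inverse discrepancy here is the usual pitfall. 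Granting it, the marked-point terms $\sum_i\langle w_i^{-1}\lambda_i,\tau\rangle$ and $\sum_i\langle w_i\tau,\lambda_i\rangle=\sum_i\langle\lambda_i,w_i^{-1}\cdot(w_i\tau)\rangle$ agree after rewriting, using $\langle w_i\tau,\lambda_i\rangle=\langle\tau,w_i^{-1}\lambda_i\rangle=\langle w_i^{-1}\lambda_i,\tau\rangle$ by $W$-invariance of the pairing.

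The remaining point is to identify the $\Gra$-contribution $\bl(h,\tau)$ with $\bl\deg(\parE_B\times_B\CC_\tau)$. Here I would use the dictionary from Section~\ref{sec:parb}: the point $\monx$ projects to $(\parE,\sigma_\infty)$, and its membership in $S_{-h}=L_{alg}U\,L_{-h}$ pins down the extension type of $\parE$ and the degree of the section $\bar\sigma$. Concretely, the line bundle $\parE_B\times_B\CC_\tau$ is the pullback along $\bar\sigma$ of the associated line bundle $\Li(\tau)$ on $\parE/B$, and its degree is computed exactly by the kind of valuation argument used in Lemma~\ref{lem:degphi}: evaluating against $\tau$ (viewed as a character) on the transition function of $\parE$ produces the integer $\langle\tau,h\rangle=(h,\tau)$ under the identification $X^*(T)_\RR\simeq X_*(T)_\RR$. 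Thus $\deg(\parE_B\times_B\CC_\tau)=(h,\tau)$, and multiplying by the level $\bl$ matches the first term of Lemma~\ref{lem:calculmu}.

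The main obstacle I anticipate is bookkeeping the conventions precisely: the sign of $h$ (the factor lies in $S_{-h}$, not $S_h$), the direction of the $B$ versus $B^-$ cells at $0$ and $\infty$, and whether the degree of the induced line bundle comes out as $(h,\tau)$ or its negative. These are exactly the places where the Peterson/Birkhoff decompositions of Section~\ref{sec:Waff} and the degree computation of Lemma~\ref{lem:degphi} must be invoked with matching orientations. Once the degree computation is aligned with Lemma~\ref{lem:degphi} and the $w_i$-identification is settled, the equality is just the term-by-term comparison of two expressions that are, by construction, both linear in $\tau$. I would therefore structure the proof as: (i) reduce $\pardeg$ to a line-bundle degree plus marked-point terms via the definition; (ii) compute that degree as $(h,\tau)$ by a valuation argument mirroring Lemma~\ref{lem:degphi}; (iii) match the marked-point terms using $W$-invariance; and (iv) conclude by comparing with Lemma~\ref{lem:calculmu}.
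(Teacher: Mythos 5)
Your proposal is correct and takes essentially the same route as the paper: the paper's proof likewise invokes Lemma~\ref{lem:calculmu} and reduces the statement to the two verifications $(h,\tau)=\deg(\parE_B\times_B\CC_{\tau})$ and the matching of the relative-position data, which it leaves as ``direct verifications'' and which you sketch in more detail. The only minor difference is that the paper claims just the coset equality $v_iW_{P(\tau)}=w_iW_{P(\tau)}$ for the two conventions defining the Weyl group elements --- which suffices because $W_{P(\tau)}$ fixes $\tau$, so $\langle w_i\tau,\lambda_i\rangle$ depends only on that coset --- whereas you aim at literal equality.
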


\begin{proof}
  Let $h\in X_*(T)$ and $v_i\in W$ (for
 $i=1,2,3$) such that $\monx$ belongs to $S_{-h}\times Uv_1^{-1}B/B\times
 Uv_2^{-1}B/B\times Uv_3^{-1}B/B$. With Lemma~\ref{lem:calculmu}, it
 is sufficient to prove that 
$
(h,\tau)=\deg(\parE_B\times_B\CC_{\tau}),
$
and that $v_iW_{P(\tau)}=w_iW_{P(\tau)}$, for any $i=1,2,3$.
These are direct verifications. 
\end{proof}

\bigskip
Recall from e.g.\ \cite{HeinlothSchmitt:ssparbund}, that $(\parE, \xi_1,\xi_2,\xi_3)$ is said to
be {\it semistable  relatively to $\Li$} if and only
if for any dominant $\tau\in X_*(T)$ and any
parabolic reduction $\bar\sigma\,:\,\PP^1\longto \parE/B$ we have
$$
\pardeg(\parE, \xi_1,\xi_2,\xi_3,\bar\sigma,\tau)\leq 0.
$$

\begin{coro}
\label{cor:ss=ss}
Fix $\monx\in\mvar$ and the corresponding flagged principal bundle $
(\parE,\xi_1,\xi_2,\xi_3)$.
Then
$\monx$ is numericaly  semistable in the sense of Definition~\ref{sec:defxss} if and
only if the flagged principal $(\parE,\xi_1,\xi_2,\xi_3)$ is
semistable relatively to $\Li$.
\end{coro}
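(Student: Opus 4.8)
The plan is to reduce the statement to the identity of Lemma~\ref{lem:pardegmu}, namely $\mu^\Li(\monx,\tau)=\pardeg(\parE,\xi_1,\xi_2,\xi_3,\bar\sigma,\tau)$, together with the observation from Section~\ref{sec:parb} that moving $\monx$ inside its $L_{alg}^{<0}G$-orbit leaves the flagged bundle $(\parE,\xi_1,\xi_2,\xi_3)$ unchanged while modifying the associated parabolic reduction $\bar\sigma$. Concretely, for $g\in L_{alg}^{<0}G$ the point $g\monx$ carries the same flagged bundle as $\monx$, but its section $\sigma_\infty$ is replaced by $\sigma_\infty g^{-1}$, hence it determines a new parabolic reduction $\bar\sigma_g$. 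Applying Lemma~\ref{lem:pardegmu} to $g\monx$ then yields $\mu^\Li(g\monx,\tau)=\pardeg(\parE,\xi_1,\xi_2,\xi_3,\bar\sigma_g,\tau)$ for every dominant $\tau$.

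The heart of the argument is to show that the map $g\mapsto\bar\sigma_g$ from $L_{alg}^{<0}G$ to the set of parabolic reductions $\bar\sigma\colon\PP^1\longto\parE/B$ is surjective. I would argue as follows. Trivialize $\parE$ over $\PP^1-\{0\}\cong\mathbb{A}^1$ (with coordinate $z^{-1}$) by means of $\sigma_\infty$; in this trivialization $\sigma_\infty$ is the identity section, so acting by $g\in L_{alg}^{<0}G=G(\CC[z^{-1}])$ sends $\bar\sigma$ to the reduction $z^{-1}\mapsto g^{-1}(z^{-1})B/B$, and the restriction to $\PP^1-\{0\}$ of an arbitrary reduction is exactly a regular map $\mathbb{A}^1\longto G/B$. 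Since $B=T\ltimes U$ with $T$ a torus and $U$ unipotent, any $B$-torsor over the affine line is trivial, so every regular map $\mathbb{A}^1\longto G/B$ lifts to a regular map $\mathbb{A}^1\longto G$, i.e.\ to an element of $L_{alg}^{<0}G$; moreover a section of the projective bundle $\parE/B$ over $\mathbb{A}^1$ extends automatically over the puncture $0$ by properness. This establishes the surjectivity, which I expect to be the one genuinely nontrivial point of the proof.

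Finally I would assemble the equivalences. By definition $\monx$ is numerically semistable iff $\mu^\Li(g\monx,\tau)\leq 0$ for all $g\in L_{alg}^{<0}G$ and all dominant $\tau$; by the displayed identity this is equivalent to $\pardeg(\parE,\xi_1,\xi_2,\xi_3,\bar\sigma_g,\tau)\leq 0$ for all such $g$ and $\tau$. By the surjectivity just proved, as $g$ varies the reduction $\bar\sigma_g$ runs over \emph{all} parabolic reductions of $\parE$, so this condition is equivalent to $\pardeg(\parE,\xi_1,\xi_2,\xi_3,\bar\sigma,\tau)\leq 0$ for every parabolic reduction $\bar\sigma$ and every dominant $\tau$. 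Since the quantification over dominant $\tau$ matches on both sides, the last condition is precisely the semistability of $(\parE,\xi_1,\xi_2,\xi_3)$ relatively to $\Li$ recalled before the statement, which completes the proof.
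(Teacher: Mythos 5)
Your proof is correct and follows exactly the route the paper intends: the corollary is stated there without a written proof, as an immediate consequence of Lemma~\ref{lem:pardegmu} combined with the dictionary of Section~\ref{sec:parb} (the $L_{alg}^{<0}G$-orbit of $\monx$ determines the flagged bundle, while varying $g$ varies the parabolic reduction). Your explicit verification that $g\mapsto\bar\sigma_g$ reaches every parabolic reduction --- via triviality of $B$-torsors over the affine line and extension across the puncture by properness --- is precisely the point the paper leaves implicit, and your argument for it is sound.
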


\subsection{Generic toric reduction}

Let $(\parE,\xi_1,\xi_2,\xi_3)$ be a flagged principal bundle.
Let $\Omega$ be a nonempty open subset of $\PP^1$ and
$\eta\,:\,\Omega\longto \parE/T$ be a reduction defined on $\Omega$.
Let $\tau\in X_*(T)$. 
Since $T\subset P(\tau)$, we have a quotient map
$\parE/T\longto\parE/P(\tau)$. 
Hence $\eta$ induces a reduction $\Omega\longto\parE/P(\tau)$ that
extends to $\bar\sigma\,:\,\PP^1\longto\parE/P(\tau)$.
Consider the map
$$
\begin{array}{cccl}
  \mu_\eta\,:&X_*(T)&\longto&\ZZ\\
&\tau&\longmapsto&\pardeg(\parE,\xi_1,\xi_2,\xi_3,\bar\sigma,\tau).
\end{array}
$$
Note that in \eqref{eq:defpardeg}, we replace
$\parE_B\times_B\CC_{\tau}$ by $\parE_{P(\tau)}\times_{P(\tau)}\CC_{\tau}$.
Since $P(\tau)$ and $\bar\sigma$ only depends on the signs
of the $\langle \tau,\alpha\rangle$ for $\alpha\in\Phi$, the map
$\mu_\eta$ is piecewise linear. In particular, it extends to a
positively homogeneous, continuous, piecewise linear function from
$X_*(T)\otimes\RR$ to $\RR$. This extension is still denoted by
$\mu_\eta$.
The following proposition will play an important role.

\begin{prop}\label{prop:muetaconvex}
  Assume that  $\frac{\lambda_1}\bl$, $\frac{\lambda_2}\bl$ and $\frac{\lambda_3}\bl$ belong to the
  alcove $\alc^*$. 
Then the map $\mu_\eta$ is convex.
\end{prop}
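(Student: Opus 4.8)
The plan is to exploit that $\mu_\eta$ is a continuous, positively homogeneous, piecewise-linear function on the finite Coxeter fan of $X_*(T)_\RR$: its domains of linearity are the relatively open cones cut out by the hyperplanes $\langle\tau,\alpha\rangle=0$, $\alpha\in\Phi^+$, since on each such cone the pair $(P(\tau),\bar\sigma)$ and the relative positions $w_i(\tau)$ are constant. For such a function, convexity in the sense of Section~\ref{sec:cvxe} (super-additivity) is a purely local condition across codimension-one walls: orienting $\alpha$ so that $\langle\cdot,\alpha\rangle\ge 0$ on the first of the two adjacent maximal cones, and writing the two linear pieces as $\ell$ and $\ell'$ (which agree on the wall $\langle\tau,\alpha\rangle=0$), convexity at that wall amounts to $\ell-\ell'=c\,\langle\cdot,\alpha\rangle$ with $c\le 0$. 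So the whole proof reduces to a sign computation of the jump of $\mu_\eta$ across one wall at a time, and it suffices to control each of the two contributions to $\mu_\eta(\tau)=\bl\,\deg(\parE_{P(\tau)}\times_{P(\tau)}\CC_\tau)+\sum_{i=1}^3\langle w_i(\tau)^{-1}\lambda_i,\tau\rangle$.

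First I would dispose of the flag part $\sum_{i=1}^3\langle w_i(\tau)^{-1}\lambda_i,\tau\rangle=\sum_i\langle\lambda_i,w_i(\tau)\tau\rangle$. Here $w_i(\tau)$ is the relative position of the $P(\tau)$-flag $\bar\sigma(p_i)$, obtained by projecting the fixed full flag $\eta(p_i)$, with the fixed full flag $\xi_i$; it therefore depends on $\tau$ only through $P(\tau)$, and across a wall $\langle\tau,\alpha\rangle=0$ the element $w_i$ either stays fixed or is multiplied by $s_\alpha$ on the appropriate side. Since $\lambda_i$ is dominant, the elementary identity $\langle\lambda_i,s_\alpha\tau\rangle-\langle\lambda_i,\tau\rangle=-\langle\lambda_i,\alpha^\vee\rangle\langle\tau,\alpha\rangle$ (up to the normalisation relating $\alpha$ to $\alpha^\vee$) shows that each such jump is a nonpositive multiple of $\langle\cdot,\alpha\rangle$. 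This is nothing but the standard convexity of $\tau\mapsto\langle\lambda_i,w\tau\rangle$ over the Weyl fan and requires no hypothesis on the $\lambda_i$.

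The heart of the matter is the degree part $\bl\,\deg(\parE_{P(\tau)}\times_{P(\tau)}\CC_\tau)$. Over $\Omega$ this line bundle is the one attached to the fixed toric reduction $\eta$, whose degree is linear in $\tau$; the only corrections come from extending the induced $P(\tau)$-reduction across the finitely many points of $\PP^1\setminus\Omega$. At each such point I would compute the local correction using the loop-group decompositions of Section~3 (the Cartan/Birkhoff and Peterson decompositions, Theorem~\ref{th:decPeterson} and Lemma~\ref{lem:degphi}): the extension is governed by a coweight $h=h(\tau)$ — the stratum $S_{-h}$, via its Peterson--Woodward representative of Lemma~\ref{lem:PW} — so that, exactly as in Lemma~\ref{lem:calculmu}, the contribution equals $\bl\,(h(\tau),\tau)$. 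Thus on each chamber $\mu_\eta(\tau)=\bl\,(h(\tau),\tau)+\sum_i\langle\lambda_i,w_i(\tau)\tau\rangle$, and crossing a wall makes $h$ jump by a coroot while the $w_i$ jump by a reflection. The hard part, and where the alcove hypothesis enters, is the wall attached to the base point $z=0$ carrying the affine Grassmannian factor: there the jump of $h$ is controlled by the affine reflection $s_0$ attached to the highest root $\theta$, and the affine length formula of Section~\ref{sec:Waff}, with its $\langle h,\alpha\rangle-1$ terms, shows that $h$ moves by an amount involving $\theta^\vee$. After dividing by $\bl$, the concavity inequality at this one wall reads precisely $\langle\lambda_i,\theta^\vee\rangle\le\bl$, i.e.\ the upper alcove inequality $\lambda_i/\bl\in\alc^*$; this is the single place where the level $\bl$ must dominate the flag data, and the step I expect to demand the most care.

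Granting that the $\bl\,(h,\cdot)$-term dominates the flag jumps at this critical wall, while all remaining finite walls are handled by the elementary computation of the second paragraph, every wall-jump is a nonpositive multiple of its defining form and $\mu_\eta$ is convex. Alternatively, and perhaps more slickly, one may hope to identify $\mu_\eta$ with the restriction of a genuine GIT function $\mu^\Li(\monx_0,\cdot)$ attached to the $T$-fixed degeneration $\monx_0$ encoding the data $(\eta,h_{PW},w_1,w_2,w_3)$, whose convexity is the fact recalled in Section~\ref{sec:defxss} via Lemma~\ref{lem:pardegmu}; on this route the alcove hypothesis is exactly what guarantees that $\monx_0$ lies in $\mvar$, that is, that $\bl\Lambda+\lambda_i$ is a dominant affine weight, and convexity is then inherited directly.
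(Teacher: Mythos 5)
Your opening paragraph coincides with the paper's first step (reduce convexity of the piecewise linear function $\mu_\eta$ to a jump inequality across each wall of the Weyl fan), but the argument collapses in your second paragraph. The flag part $\sum_i\langle w_i(\tau)^{-1}\lambda_i,\tau\rangle$ is \emph{not} separately convex, and the claim that its wall jumps are automatically nonpositive ``with no hypothesis on the $\lambda_i$'' is false. The relative position $w_i(\tau)$ is read off from the extension to $p_i$ of the reduction induced by $\eta$, and when $p_i\notin\Omega$ the extensions attached to two adjacent chambers need not come from a single point of $\parE_{p_i}/T$: already for $G=\SL_2$ and the rational toric reduction $x\longmapsto u(1/x)T$ of the trivial bundle ($u$ a root subgroup), the limits at $x=0$ of the induced $B$- and $B^-$-reductions are a pair of \emph{coincident} flags, something a genuine torus reduction never produces. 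Hence the flag jump at such a $p_i$ can have the ``wrong'' sign, and no dominance argument rules this out; this bad case is exactly where all the difficulty sits. The paper's proof confronts it head on: it forms the flagged $L^\alpha$-bundle, the two-dimensional module $V_{\varpi_\alpha}(L^\alpha)$ with its lines $d$, $d'$, and the induced morphism of line bundles $\parE_{L^\alpha\cap B}\times_{L^\alpha\cap B}d\longto \parE_{L^\alpha\cap B'}\times_{L^\alpha\cap B'}V_{\varpi_\alpha}(L^\alpha)/d'$ on $\PP^1$; in the bad-sign case this morphism must \emph{vanish} at $p_i$, so each bad point produces a gain of at least $1$ in the degree term, and the alcove hypothesis is invoked precisely to bound the corresponding flag loss, $0\le\langle\lambda_i^\alpha,\alpha^\vee\rangle\le 1$ after dividing by $\bl$. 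The hypothesis thus couples the two parts you tried to decouple, and it is used at \emph{every} wall.

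Two further points confirm the gap. First, $\mu_\eta$ lives on the finite Weyl fan of $X_*(T)_\RR$: its walls are the hyperplanes $\langle\cdot,\alpha\rangle=0$ for $\alpha\in\Phi$, and there is no ``wall attached to the base point $z=0$'' where $s_0$ or the affine length formula could intervene; so the single place where you propose to spend the alcove hypothesis does not exist, and your degree analysis never meets the flag losses it has to absorb. Second, the ``slicker'' route fails as well: $\mu_\eta$ is not $\mu^\Li(\monx_0,\cdot)$ for any single point $\monx_0$; chamber by chamber it agrees with $\mu^\Li(g\monx,\cdot)$ for elements $g\in L^{<0}_{alg}G$ that depend on the chamber (this is exactly how it is used in the proof of Proposition~\ref{prop:unicitytau}), and membership in $\mvar=\Gra\times(G/B)^3$ has nothing to do with the weights $(\lambda_i,\bl)$. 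If that identification held, the proposition would be true assuming only dominance of the $\lambda_i$, since GIT weight functions are convex with no alcove condition; but the upper bound $\langle\lambda_i,\theta^\vee\rangle\le\bl$ is essential --- without it a flag loss $\langle\lambda_i^\alpha,\alpha^\vee\rangle>1$ cannot be dominated by the degree gain of $1$ per bad point, and $\mu_\eta$ is in general not convex.
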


\begin{proof}
  Recall that the cones of the Weyl fan  are the
subsets of $\tau\in X_*(T)\otimes\RR$ such that for each
$\alpha\in\Phi$,  the product
$\langle\tau,\alpha\rangle$ is fixed to be negative, positive or zero.
The parabolic subgroup $P(\tau)$ only depends on the cone of the Weyl
fan containing $\tau$ in its relative interior. Now, the
  formula~\eqref{eq:defpardeg} and Lemma~\ref{lem:pardegmu} show that 
the restriction of $\mu_\eta$ to any such cone is linear.
 Then it is sufficient to check convexity when one goes from any
 chamber to an adjacent one. 
To simplify notations, we assume that one of these two chambers is the
dominant one $X_*(T)_\RR^+$. The other one is $s_\alpha. X_*(T)_\RR^+$
for some simple root $\alpha$. The minimal parabolic subgroup
$P^\alpha$ associated to some simple root $\alpha$ is the closure of $Bs_\alpha Bs_\alpha$. 
Let $\mu\in \Hom(X_*(T),\QQ)$ (resp. $\mu'\in \Hom(X_*(T),\QQ)$) whose
the restriction to $X_*(T)_\RR^+$ (resp. $s_\alpha.X_*(T)_\RR^+$) is
equal to $\mu_\eta$(resp $\mu'_\eta$).
Recall that $\alpha^\vee$ is orthogonal to the span of
$X_*(T)_\RR^+\cap (s_\alpha X_*(T)_\RR^+)$ and
oriented toward $X_*(T)_\RR^+$.
The convexity on the union of these two chambers is equivalent to the
following inequality
\begin{eqnarray}
  \label{eq:convexadjchambers}
  \mu(\alpha^\vee)\geq\mu'(\alpha^\vee).
\end{eqnarray}

Let $L^\alpha$ denote the Levi subgroup of $P^\alpha$ containing $T$
and let $R^u(P^\alpha)$ denote the unipotent radical of $P^\alpha$.
Consider the reduction
$\bar\sigma_\alpha\,:\,\PP^1\longto\parE/P^\alpha$ induced by $\eta$
and $\parE_{P^\alpha}$ be the associated principal $P^\alpha$-bundle.
Since $L^\alpha$
identifies with $P/R^u(P^\alpha)$, $\parE_{P^\alpha}/R^u(P^\alpha)$
is a principal $L^\alpha$-bundle on $\PP^1$ denoted by $\parE_{L^\alpha}$.
We are going to endow $\parE_{L^\alpha}$ with a flagged structure and
express  $\mu(\alpha^\vee)-\mu'(\alpha^\vee)$ in terms of this flagged
bundle.

Set $\bar\alpha=\frac 2 {(\alpha,\alpha)}\alpha$.
First, observe that $\parE_B\times_B\CC_{\bar\alpha}$
(resp. $\parE_{B'}\times_{B'}\CC_{\bar\alpha}$ ) identifies with $\parE_{B\cap
  L^\alpha}\times_{B\cap L^\alpha}\CC_{\bar\alpha}$ (resp.  $\parE_{B'\cap
  L^\alpha}\times_{B'\cap L^\alpha}\CC_{\bar\alpha}$).
In particular,
\begin{eqnarray}
  \label{eq:3}
  \deg(\parE_B\times_B\CC_{\bar\alpha}
)
-\deg(\parE_{B'}\times_{B'}\CC_{\bar\alpha}
)=
\deg(\parE_{B\cap
  L^\alpha}\times_{B\cap L^\alpha}\CC_{\bar\alpha}
)
-\deg(\parE_{B'\cap
  L^\alpha}\times_{B'\cap L^\alpha}\CC_{\bar\alpha}.
)
\end{eqnarray}

Fix $i=1,2$ or $3$. Choose an identification $\parE_{p_i}\simeq G$
such that $\bar\sigma_\alpha(p_i)$ corresponds with $P^\alpha$.
Let $B_i$ be the Borel subgroup $G$ associated to the flagged structure
at $p_i$. Then $(B_i\cap P^\alpha)/R^u(P^\alpha)$ is a Borel subgroup
of $L^\alpha$. Let $\xi^\alpha_i$ be the associated flagged structure at
$p_i$ for $\parE_{L^\alpha}$.
Now $(\parE_{L^\alpha},\xi^\alpha_1,\xi^\alpha_2,\xi^\alpha_3)$ is a
flagged principal $L^\alpha$-bundle.

Observe that $\parE/B$ and $\parE/B'$ identify canonically. Then,
$\bar\sigma_B$ and $\bar\sigma_{B'}$ are two sections of this
$G/B$-bundle.
We have the commutative diagram

\begin{center}
\begin{tikzpicture}
  \matrix (m) [matrix of math nodes,row sep=3em,column sep=4em,minimum width=2em]
  {&\parE/B\\
     \PP^1 & \parE/P_\alpha \\
     & \parE/B\\};
  \path[-stealth]  
    (m-2-1) edge node [above] {$\bar\sigma_\alpha$} (m-2-2)
    (m-2-1) edge node [above] {$\bar\sigma_\alpha$} (m-1-2)
    (m-2-1) edge node [below] {$\bar\sigma_\alpha$} (m-3-2)
    (m-1-2) edge node [right]{$\pi$} (m-2-2)
    (m-3-2) edge node [right]{$\pi$}(m-2-2);       
\end{tikzpicture} 
\end{center}
where the vertical maps $\pi$ are induced by the inclusion $B\subset
P^\alpha$.
By construction $\pi(\bar\sigma_B(p_i))= 
\pi(\bar\sigma_B(p_i))=\bar\sigma_\alpha(p_i)$. 
Recall that $w_i\in W$ is characterized by the relation
$(\bar\sigma_B(p_i),\xi_{p_i})\in G.(B/B,w_i^{-1}B/B)$.
 Similarly we defined $w_i'$. 
 Then, $w_i'$ is equal to either $w_i$ or $w_is_\alpha$.

\bigskip
Consider the $L^\alpha$-irreducible representation
$V_{\varpi_\alpha}(L^\alpha)$ of highest weight $\varpi_\alpha$. It
has dimension two, a unique $B\cap L^\alpha$-fixed line $d$ 
and  a unique $B'\cap L^\alpha$ fixed line $d'$. 
Consider the associated short exact sequence
\begin{center}
  
\begin{tikzpicture}
  \matrix (m) [matrix of math nodes,row sep=3em,column sep=4em,minimum width=2em]
  {0&d&V_{\varpi_\alpha}(L^\alpha)&V_{\varpi_\alpha}(L^\alpha)/d'&0\\};
  \path[-stealth]  
    (m-1-1) edge (m-1-2)  (m-1-2)  edge (m-1-3) (m-1-3)  edge (m-1-4) (m-1-4)  edge (m-1-5);       
\end{tikzpicture} 
\end{center}

Consider the associated morphisms of vector bundles on $\PP^1$:

\begin{tikzpicture}
  \matrix (m) [matrix of math nodes,row sep=3em,column sep=4em,minimum width=2em]
  {\parE_{L^\alpha\cap B}\times_{L^\alpha\cap B} d   &
 \parE_{L^\alpha}\times_{L^\alpha} V_{\varpi_\alpha}(L^\alpha)
&  \parE_{L^\alpha\cap B'}\times_{L^\alpha\cap B'} V_{\varpi_\alpha}(L^\alpha)/d'\\  };
 \path[-stealth]  
    (m-1-1) edge (m-1-2) (m-1-2)  edge (m-1-3);      
\end{tikzpicture} 

Set $\lambda_i^\alpha=w_i\lambda_i$, for $i=1,2,3$ (HERE $w_i\in W^{P^\alpha}$).
Let  $v_i=s_\alpha$ or $e$ denote the relative position of the flag on
$(\parE_{L^\alpha})_{p_i}$ and $\bar\sigma_B^\alpha(p_i)$. Similarly $v_i'$.
Inequality~\eqref{eq:convexadjchambers} is equivalent to 
$$
\begin{array}{ll}
  \deg( \parE_{L^\alpha\cap B}\times_{L^\alpha\cap B} d)+
\frac 1   2\sum_i\langle v_i\lambda_i^\alpha,\alpha^\vee\rangle&\geq\\
  \deg( \parE_{L^\alpha\cap B'}\times_{L^\alpha\cap B'}
    V_{\varpi_\alpha}(L^\alpha)/d)+
\frac 1 2\sum_i\langle v_i'\lambda_i^\alpha,\alpha^\vee\rangle\\
\end{array}.
$$
But $\langle v_i\lambda_i,\alpha^\vee\rangle-\langle
v_i'\lambda_i,\alpha^\vee\rangle$ is either equal to $0$, $\pm 2\langle
\lambda_i^\alpha,\alpha^\vee\rangle$.
The two cases $0$ and sign + are easy. Consider the last case.
The point is that in this case the morphism has to vanish at $p_i$. 
In particular, if this case occurs $d$ times (when $i$ runs over $\{1,2,3\})$
then $\deg( \parE_{L^\alpha\cap B}\times_{L^\alpha\cap B} d)-\deg(
  \parE_{L^\alpha\cap B'}\times_{L^\alpha\cap B'}
    V_{\varpi_\alpha}(L^\alpha)/d)\geq d$. 
But we have
$0\leq \langle \lambda_i^\alpha,\alpha^\vee\rangle\leq 1$.
Inequality \eqref{eq:convexadjchambers} follows.
\end{proof}


\bigskip{\bf The polytope $\poly_\eta$.}
Consider in $X_*(T)_\RR$ the fan $\Sigma$ whose the maximal cones are
the Weyl chambers. By formula~\eqref{eq:defpardeg} and
Lemma~\ref{lem:pardegmu}, the restriction of $\mu_\eta$ to any Weyl
chamber is linear. In particular, $\mu_\eta$ is piecewise linear.
Like in Section~\ref{sec:cvxe}, consider the associated polytope
$$
\poly_\eta=\{\chi\in X^*(T)_\RR\,:\, \forall\tau\in
X_*(T)\quad\langle\tau,\chi\rangle\geq\mu_\eta(\tau)\}.
$$
Let $B'$ be a Borel  subgroup of $G$ containing $T$ and let $C'$
denote the corresponding Weyl chamber in $X_*(T)_\RR$.
Let $\chi_{B'}$ be the only point in $X^*(T)_\QQ$ such that $\langle
\chi_{B'},\tau\rangle=\mu_\eta(\tau)$ for any $\tau$ in $C'$. Then
$\poly_\eta$ is the convex hull of the $\chi_{B'}$ for various
Borel subgroups $B'\supset T$.

Similarly the rays of $\Sigma$ correspond bijectively with the maximal
parabolic subgroups $P$ containing $T$. For any such parabolic
subgroup, let $\tau_P$ denote the unique indivisible one parameter
subgroup of $T$ such that $P=P(\tau_P)$. Then
$$
\poly_\eta=\{\chi\in X^*(T)_\RR\,:\, \forall {\rm \ maximal\ }P\supset
T\quad \langle
\chi,\tau_P\rangle\geq\mu_\eta(\tau_P)\}.
$$

\subsection{Canonical reduction}

Let $\monx\in\mvar$ be numerically unstable. Let $g\in L^{<0}_{alg}G $
and let $\tau_0$ be an
indivisible dominant one-parameter subgroup  of $T$ such that 
  $M^\Li(\monx)=\frac{\mu^\Li(g\monx,\tau)}{\Vert\tau_0\Vert}$.
To the point $g\monx$ corresponds a flagged bundle $\Eps$ with a section
$\sigma_\infty$ over $\PP^1-\{0\}$.
This section extends to a parabolic reduction
$\sigma^{P(\tau_0)}_{g\monx}\,:\,\PP^1\longto \parE/P(\tau_0)$.

\begin{prop}
\label{prop:unicitytau}
Assume that $\frac{\lambda_1}\bl$, $\frac{\lambda_2}\bl$ and $\frac{\lambda_3}\bl$ belong to the
  alcove $\alc^*$. 
Let $\monx\in\mvar$ and $(\parE,\xi_1,\xi_2,\xi_3)$ be the associated
flagged bundle. Assume that $\monx$ is unstable relatively to $\Li$.

Let $g_1$ and $g_2$ in  $L^{<0}_{alg}G$ and let $\tau_1$ and $\tau_2$ be
two dominant indivisible one parameter subgroups of $T$ such that
$$
M^\Li(\monx)=\frac{\mu^\Li(g_1\monx,\tau_1)}{\Vert\tau_1\Vert}=\frac{\mu^\Li(g_2\monx,\tau_2)}{\Vert\tau_2\Vert}.
$$
  Then
  \begin{enumerate}
  \item $\tau_1=\tau_2$ ; set $P=P(\tau_1)=P(\tau_2)$.
\item $g_2g_1^{-1}\in L_{alg}^{<0}P$.
\item  The two reductions $\bar\sigma_1$ and $\bar\sigma_2$ from
  $\PP^1$ to $\parE/P$ associated to $g_1\monx$ and $g_2\monx$
  respectively coincide.
This reduction is called the {\it canonical reduction} of $(\parE,\xi_1,\xi_2,\xi_3)$.
  \end{enumerate}
\end{prop}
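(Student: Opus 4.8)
The plan is to adapt Kempf's uniqueness argument for the optimal destabilizing one-parameter subgroup, using the convexity of $\mu_\eta$ (Proposition~\ref{prop:muetaconvex}) in place of convexity along a fixed torus, and the strict convexity of the Euclidean norm $\Vert\,\Vert$ for the final rigidity. First I would transport everything to the fixed flagged bundle. Since $g_1\monx$ and $g_2\monx$ lie in the same $L^{<0}_{alg}G$-orbit, they determine the same flagged bundle $(\parE,\xi_1,\xi_2,\xi_3)$, but two a priori different reductions $\bar\sigma_i\colon\PP^1\to\parE/P(\tau_i)$, namely the extensions of the sections $\sigma_\infty$ attached to $g_i\monx$. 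By Lemma~\ref{lem:pardegmu},
\[
M^\Li(\monx)\Vert\tau_i\Vert=\mu^\Li(g_i\monx,\tau_i)=\pardeg(\parE,\xi_1,\xi_2,\xi_3,\bar\sigma_i,\tau_i),\qquad i=1,2.
\]

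The central construction is a \emph{single} generic toric reduction $\eta\colon\Omega\to\parE/T$ that simultaneously induces $\bar\sigma_1$ and $\bar\sigma_2$. At the generic point of $\PP^1$ the reductions $\bar\sigma_1,\bar\sigma_2$ pick out two parabolic subgroups of the structure group; since any two parabolic subgroups of $G$ share a common maximal torus, I can choose such a torus and spread it out to a reduction $\eta$ on a nonempty open $\Omega$, whose induced reductions to $P(\tau_1)$ and to $P(\tau_2)$ agree generically, hence everywhere by properness of $\parE/P(\tau_i)\to\PP^1$, with $\bar\sigma_1$ and $\bar\sigma_2$. For this $\eta$ the function $\mu_\eta$ is convex by Proposition~\ref{prop:muetaconvex}, and by construction $\mu_\eta(\tau_i)=\pardeg(\ldots,\bar\sigma_i,\tau_i)=M^\Li(\monx)\Vert\tau_i\Vert$. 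Moreover $\mu_\eta(\tau)\le M^\Li(\monx)\Vert\tau\Vert$ for every dominant $\tau$: extending $\eta$ to a flag reduction $\bar\sigma_\eta$ and realizing $\bar\sigma_\eta$ by a trivialization $\sigma_\infty$ (possible over the affine curve $\PP^1\setminus\{0\}$), then adjusting the flags to recover $\xi_1,\xi_2,\xi_3$, produces via the dictionary of Section~\ref{sec:parb} a point $g\monx$ of the orbit with $\mu_\eta(\tau)=\mu^\Li(g\monx,\tau)\le M^\Li(\monx)\Vert\tau\Vert$.

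With these two facts in hand, write $M=M^\Li(\monx)>0$ and test convexity at the dominant direction $\tau_1+\tau_2$: superadditivity gives $\mu_\eta(\tau_1+\tau_2)\ge\mu_\eta(\tau_1)+\mu_\eta(\tau_2)=M(\Vert\tau_1\Vert+\Vert\tau_2\Vert)$, while the bound gives $\mu_\eta(\tau_1+\tau_2)\le M\Vert\tau_1+\tau_2\Vert$. Hence $\Vert\tau_1+\tau_2\Vert\ge\Vert\tau_1\Vert+\Vert\tau_2\Vert$, so strict convexity of the norm forces $\tau_1$ and $\tau_2$ to be positively proportional; being dominant and indivisible they coincide, $\tau_1=\tau_2=:\tau_0$, which is assertion~(i) with $P:=P(\tau_0)$. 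Assertion~(iii) is then immediate, for with $P(\tau_1)=P(\tau_2)=P$ the single $\eta$-induced reduction $\bar\sigma^\eta_{\tau_0}$ to $P$ equals both $\bar\sigma_1$ and $\bar\sigma_2$, so $\bar\sigma_1=\bar\sigma_2$. Finally, equality of the two $P$-reductions means that the automorphism $g_2g_1^{-1}\in L^{<0}_{alg}G$ preserves this common reduction, which by the correspondence of Section~\ref{sec:parb} is precisely the statement $g_2g_1^{-1}\in L^{<0}_{alg}P$, proving~(ii).

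The hard part will be the two inputs feeding the convexity computation. The first is the construction of the common toric reduction $\eta$ inducing both parabolic reductions: it rests on the elementary fact that two parabolics share a maximal torus, but requires care to globalize the generic choice via properness. The second is the bound $\mu_\eta\le M^\Li(\monx)\Vert\,\Vert$ on the dominant chamber, i.e.\ the claim that every $\eta$-induced reduction, together with the fixed flags, is realized by a point of the $L^{<0}_{alg}G$-orbit of $\monx$; this is where the dictionary of Section~\ref{sec:parb} must be used surjectively. Once these are in place, the passage through convexity and strict convexity of the norm, and the deduction of all three assertions, is formal.
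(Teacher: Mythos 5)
Your overall strategy coincides with the paper's: build a single generic toric reduction $\eta$, invoke the convexity of $\mu_\eta$ (Proposition~\ref{prop:muetaconvex}) together with the bound $\mu_\eta\leq M^\Li(\monx)\Vert\cdot\Vert$, and conclude by strict convexity of the norm. But there is a genuine gap in your key construction, and it is exactly at the point where the real work of the proposition lies. You claim that $\eta$ can be chosen so that its induced reductions to $P(\tau_1)$ and to $P(\tau_2)$ --- meaning the compositions of $\eta$ with the projections $\parE/T\longto\parE/P(\tau_i)$ coming from the \emph{standard} inclusions $T\subset P(\tau_i)$ --- are exactly $\bar\sigma_1$ and $\bar\sigma_2$. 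This is false in general. Take $G=\SL_2$ and suppose $\tau_1,\tau_2$ are both dominant regular, so $P(\tau_1)=P(\tau_2)=B$; if the two reductions $\bar\sigma_1,\bar\sigma_2$ are distinct at a generic point $x$, i.e.\ give two distinct Borel subgroups $Q_1\neq Q_2$ of the fiber, then no trivialization $g$ of $\parE_x$ can satisfy both $gBg^{-1}=Q_1$ and $gBg^{-1}=Q_2$. The fact that $Q_1$ and $Q_2$ share a maximal torus only lets you write $Q_1=gP(\tau_1)g^{-1}$ and $Q_2=gP(v\tau_2v^{-1})g^{-1}$ for the \emph{same} $g$ but with an unavoidable twist by some $v\in W$ (the generic relative position of the two reductions). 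Your hedge about ``globalizing via properness'' does not address this: the obstruction is already pointwise, not a spreading-out issue. Worse, assuming the twist away amounts to assuming that the relative position lies in $W_{P}$, which is essentially assertion (iii) --- the thing to be proved.

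The paper's proof handles precisely this twist. It takes the two Borel reductions $\sigma_1^B,\sigma_2^B$ attached to $g_1\monx$ and $g_2\monx$, lets $w\in W$ be their relative position on a nonempty open set, and constructs $\eta$ so that $\sigma_1^B=p\circ\eta$ and $\sigma_2^B=p_w\circ\eta$, where $p$ and $p_w$ are the projections induced by $T\subset B$ and $T\subset wBw^{-1}$. The upshot is
$$
\mu^\Li(g_1\monx,\tau_1)=\mu_\eta(\tau_1)
\qquad\text{and}\qquad
\mu^\Li(g_2\monx,\tau_2)=\mu_\eta(w\tau_2w^{-1}),
$$
so the convexity/unique-maximum argument (which is your computation, and is correct) yields $\tau_1=w\tau_2w^{-1}$, not $\tau_1=\tau_2$. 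Only now does dominance enter: a dominant cocharacter that is $W$-conjugate to a dominant one equals it, so $\tau_1=\tau_2$; and the resulting identity $\tau_2=w\tau_2w^{-1}$ forces $w\in W_P$, which is exactly what makes the two induced reductions to $\parE/P$ coincide (assertion (iii)) and then gives $g_1^{-1}(z)P/P=g_2^{-1}(z)P/P$ for all $z$, i.e.\ assertion (ii). In your write-up, assertion (iii) is declared ``immediate,'' but that immediacy rests on the unachievable normalization; once the twist $w$ is reinstated, (iii) is a consequence of the dominance argument, not of the construction of $\eta$. Also note that with the twist present you need the bound $\mu_\eta(\tau)\leq M^\Li(\monx)\Vert\tau\Vert$ on all of $X_*(T)_\RR$, not merely on the dominant chamber; your realization argument (every $\eta$-induced reduction arises from a point of the $L^{<0}_{alg}G$-orbit, over the affine curve $\PP^1-\{0\}$) does extend to give this, so that part of your plan is sound.
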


\begin{proof}
Let $\sigma_1^B$ and $\sigma_2^B$ denote the parabolic reductions
associated to $g_1\monx$ and $g_2\monx$ respectively.
For $x\in\PP^1$, $\sigma_B(x)$ and $\sigma'_B(x)$ belong to
$\parE_x/B$. Choosing an identification  $\parE_x/B\simeq G/B$, we get
two point in $G/B$. The element $w\in W$ such that $G.(B/B,wB/B)$
does not depend on the identification and is denote by rel$(\sigma_B(x),\sigma_B'(x))$.
By the finiteness of the Bruhat decomposition, there exists $w\in W$
and a nonempty open subset $\Omega$ of $\PP^1$ such that
rel$(\sigma_B(x),\sigma_B'(x))=w$ for any $x\in\Omega$.
Set $B_w=wBw^{-1}$. Observe the $\parE/B$ and $\parE/B_w$ are
canonically isomorphic. Let $p,p_w\,:\,\parE\longto \parE/B$ be the two
projections induced respectively by the inclusions $T\subset B$ and
$T\subset B_w$. 
Up to changing $\Omega$, one may assume that $\parE$ is trivial on
$\Omega$. 
Then, there exists a reduction
$\eta\,:\,\Omega\longto\parE/T$ such that $\sigma_1^B=p\circ\eta$ and
$\sigma_2^B=p_w\circ\eta$.

By Lemma~\ref{lem:pardegmu}, we have
$$
\mu^\Li(g_1\monx,\tau_1)=\mu_\eta(\tau_1)\quad{\rm and}\quad
\mu^\Li(g_2\monx,\tau_2)=\mu_\eta(w\tau_2w^{-1}).
$$
In particular, $\sup_{\tau\in X_*(T) {\rm\ nontrivial}}\frac{\mu_\eta(\tau)}{\Vert\tau\Vert}=M^\Li(\monx)$.
By Proposition~\ref{prop:muetaconvex}, the function is convex. In
particular, it has a unique maximum on the unit sphere. Hence
$\tau_1=w\tau_2w^{-1}$. Since $\tau_1$ and $\tau_2$ are assumed to be dominant,
$\tau_1=\tau_2$.\\

As in the proposition set $P=P(\tau_1)$.
We have also proved that $\tau_2=w\tau_2w^{-1}$. Hence $w$ belongs to
$W_P$.
Then $p$ and $p_w$ induce the same map $q\,:\,\parE/T\longto\parE/P$.
Therefore,  $\sigma_1^B$ and $\sigma_2^B$ induce the same reduction
$\sigma\,:\,\PP^1\longto \parE/P$. The last assertion of the
proposition follows.\\

Let $\sigma_\infty$ be the section of $\parE$ on $\PP^1-\{0\}$
associated to $\monx$.
Then $g_1\monx$ and $g_2\monx$ correspond respectively to 
$\sigma_\infty g_1^{-1}$ and $\sigma_\infty g_2^{-1}$.
But, we just proved these two local trivialisations induce the same
section of $\parE/P$. 
Hence $g_1^{-1}(z)P/P=g_2^{-1}(z)P/P$ for any $z$. The second
assertion is proved.
\end{proof}

\bigskip
\begin{defin}
Let $\monx\in\mvar$ be unstable relatively to $\Li$. 
Let $(\parE,\xi_1,\xi_2,\xi_3)$ be the associated flagged principal
bundle.
Let $\tau_0$ denote the  dominant one parameter subgroup of $T$ satisfying
Proposition~\ref{prop:unicitytau}.
Set $P=P(\tau_0)$.
Let $\sigma\,:\,\PP^1\longto \parE/P$ be the canonical reduction of
$(\parE,\xi_1,\xi_2,\xi_3)$ and $\parE_P$ the associated principal $P$-bundle.
For $i=1,2$ and $3$, let $w_i\in W^P$ denote the relative position of
$(\xi_i,\sigma(p_i))$.
Finally, we define a $\ZZ$-linear map
$$
\begin{array}{lccc}
  h\,:&X^*(P)&\longto&\ZZ\\
&\chi&\longmapsto&\deg(\parE_P\times_P\CC_\chi).
\end{array}
$$  
The {\it Harder-Narashiman type (HN-type for short)} of 
  $\monx$ (or of $(\parE,\xi_1,\xi_2,\xi_3)$) is the uple $(\tau_0,P,h,w_1,w_2,w_3)$.
\end{defin}

\bigskip {\bf A characterization of the canonical reduction.}
 Let $P\supset T$
be a parabolic subgroup, let $R^u(P)$ denote its unipotent radical  and let $L$ denote its Levi subgroup
containing $T$.

Let $(\parE,\xi_1,\xi_2,\xi_3)$ be a flagged bundle. Let $\sigma\,:\,\PP^1\longto\parE/P$ be a parabolic
reduction.
Let $\parE_P\subset \parE$ denote the principal $P$-subdundle associated
to $\sigma$. Then the quotient $\parE_P/R^u(P)$ is a principal
$L$-bundle. Consider a marked point $p_i$ and choose an identification
of the fiber $\parE_{p_i}$ with $G$ (as torsor). Then $\sigma(p_i)$
determines a parabolic subgroup $P'$ of $G$ conjugated to
$P$. Similarly $\xi_i$ determines a Borel subgroup $B'$ of $G$. 
Then $(P'\cap B')/R^u(P')$ is a Borel subgroup of $P'/R^u(P')$. This
Borel subgroup (which is independent on the choice) can be chosen as a
flag $\xi_i^L$ over $p_i$ in $\parE_P/R^u(P)$.
Then $(\parE_P/R^u(P),\xi_1^L,\xi_2^L,\xi_3^L)$ is a flagged
$L$-bundle over $(\PP^1,p_1,p_2,p_3)$.

Assume now that $(\parE,\xi_1,\xi_2,\xi_3)$ and the parabolic
reduction $\sigma$ come from $\monx\in \mvar$.
Let $\tau\in X_*(T)$ such that $P=P(\tau)$.
 Set $\monx_0=\lim_{s\to  0}\tau(s)\monx$.
It belongs to the fixed point set $\mvar^\tau$.
Each irreducible component of $(G/B)^\tau$ contains a unique $B\cap L$
fixed point and so identifies canonically with $L/(B\cap L)$. 
On the other hand, $L_{alg} L$ acts transitively on $\Gra^\tau$ that
identifies with the affine grassmannian $\Gra(L)$ of the group $L$.
Using these identifications, the point $\monx_0$ gives a point
$\monx_0'$ of $\Gra(L)\times (L/B\cap L)^3$.
Hence $\monx_0$ determines a flagged principal $L$-bundle. 
This bundle is $(\parE_P/R^u(P),\xi_1^L,\xi_2^L,\xi_3^L)$. 

\bigskip
Here, and like before, $\tau$ is a one parameter subgroup of $T$, 
$L$ is the centralizer of the image of $\tau$ and $P$ is the
associated parabolic subgroup. 
Since $Z(L)$ is contained in $T$, $X_*(Z(L))$ is contained in
$X_*(T)$. It is
$\oplus_{\alpha\in\Delta-\Delta_P}\ZZ\varpi_{\alpha^\vee}$.
In particular
$\tau=\sum_{\alpha\in\Delta-\Delta_P}n_\alpha\varpi_{\alpha^\vee}$,
for some integers $n_\alpha$.
The restriction map $X^*(L)\longto X^*(T)$ is injective, and 
$X^*(L)$ identifies with $\oplus_{\alpha\in\Delta-\Delta_P}\ZZ\varpi_{\alpha}$.
Consider $(\tau,\square)\in X^*(T)\otimes \QQ$.
We have:
$(\tau,\square)=\sum_{\alpha\in\Delta-\Delta_P}n_\alpha(\varpi_{\alpha^\vee},\square)=\sum_{\alpha\in\Delta-\Delta_P}n_\alpha\frac
2 {(\alpha,\alpha)}\varpi_{\alpha}$.
In particular, $(\tau,\square)$ belongs to $X^*(L)$.

\begin{prop}
  \label{prop:adapLss}
Assume that $\frac{\lambda_1}\bl$, $\frac{\lambda_2}\bl$ and $\frac{\lambda_3}\bl$ belong to the
  alcove $\alc^*$. 
Let $\monx\in\mvar$ be unstable and $\tau$ be a dominant
one parameter subgroup of $T$. Set $P=P(\tau)$, $L=G^{\tau}$ and
$\sigma\,:\,\PP^1\longto \parE/P$ be the parabolic reduction
associated to $\monx$ and $P$.
Consider the flagged principal $L$ bundle $(\parE_P/R^u(P),\xi_1^L,\xi_2^L,\xi_3^L)$.

The following are  equivalent
\begin{enumerate}
\item $M^\Li(\monx)=\frac{\mu^\Li(\monx,\tau)}{\Vert\tau\Vert}$;
\item  $\parE_P/R^u(P)$ is semistable for $L$ relatively to the line
  bundle $\Li\otimes -\mu^\Li(\monx,\tau)\frac{(\tau,\square)}{\Vert\tau\Vert}$.
\end{enumerate}
\end{prop}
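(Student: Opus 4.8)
The plan is to transfer everything to the piecewise-linear numerical function $\tau'\mapsto\mu^\Li(\cdot,\tau')$, whose convexity (Proposition~\ref{prop:muetaconvex}, and this is exactly where the alcove hypothesis on the $\lambda_i/\bl$ enters) turns the statement into a convex-geometry assertion about the point maximizing $\mu^\Li(g\monx,\tau')/\Vert\tau'\Vert$. Set $\monz=\lim_{s\to 0}\tau(s)\monx\in\mvar^\tau$. Using the discussion of the characterization of the canonical reduction, I would first identify the flagged principal $L$-bundle $(\parE_P/R^u(P),\xi_1^L,\xi_2^L,\xi_3^L)$ with the flagged bundle attached to the point of $\Gra(L)\times(L/(B\cap L))^3$ determined by $\monz$. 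The core computation is then a dictionary between the two notions of weight: for every one parameter subgroup $\tau'$ of $T$ that is dominant for $L$ and every $g'\in L_{alg}^{<0}L$, writing $\Li'=\Li\otimes(-\mu^\Li(\monx,\tau)\tfrac{(\tau,\square)}{\Vert\tau\Vert})$, one should obtain
$$
\mu^{\Li'}_L(g'\monz,\tau')=\mu^\Li(g'\monx,\tau')-\frac{\mu^\Li(\monx,\tau)}{\Vert\tau\Vert}(\tau,\tau').
$$
The first term comes from the fact that $\tau'$ commutes with $\tau$, so replacing $\monx$ by its $\tau$-limit $\monz$ does not change the $\tau'$-weight, while the second term is precisely the shift by the central character $-\mu^\Li(\monx,\tau)\tfrac{(\tau,\square)}{\Vert\tau\Vert}$ (recall $(\tau,\square)\in X^*(L)$). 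Semistability of $\parE_P/R^u(P)$ relative to $\Li'$ is then the assertion that the right-hand side is $\leq 0$ for all such $(g',\tau')$.

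For the implication (ii)$\Rightarrow$(i), I would take an arbitrary competitor $(g'',\tau'')$ with $g''\in L_{alg}^{<0}G$ and $\tau''\in X_*(T)^+$ nontrivial, and decompose $\tau''$ into its component along $\tau$ and a component orthogonal to $\tau$; after reducing to the Levi by the generic toric reduction, convexity lets me bound the contribution of the orthogonal part by the semistability inequality above, while the component along $\tau$ contributes exactly $\mu^\Li(\monx,\tau)\tfrac{(\tau,\tau'')}{\Vert\tau\Vert^2}$. Combining these with the Cauchy--Schwarz inequality $(\tau,\tau'')\leq\Vert\tau\Vert\,\Vert\tau''\Vert$ yields $\mu^\Li(g''\monx,\tau'')\leq M^\Li(\monx)\Vert\tau''\Vert$ with $M^\Li(\monx)=\mu^\Li(\monx,\tau)/\Vert\tau\Vert$, which is optimality of $\tau$.

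For (i)$\Rightarrow$(ii) I would argue by contradiction. If $\parE_P/R^u(P)$ were not $\Li'$-semistable, the dictionary produces $g'\in L_{alg}^{<0}L$ and an $L$-dominant $\tau'$ with $\mu^\Li(g'\monx,\tau')>\mu^\Li(\monx,\tau)\tfrac{(\tau,\tau')}{\Vert\tau\Vert}$. Since $\tau'$ lies in $L=G^\tau$, the curve $\tau_t=\tau+t\tau'$ stays in the closed dominant cone for small $t>0$, and by convexity and piecewise-linearity of $\tau'\mapsto\mu^\Li(g'\monx,\tau')$ the one-sided directional derivative of $\mu^\Li(g'\monx,\tau_t)$ at $t=0$ is at least $\mu^\Li(g'\monx,\tau')$. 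A direct computation of $\tfrac{d}{dt}\big|_{0^+}\mu^\Li(g'\monx,\tau_t)/\Vert\tau_t\Vert$ then shows it is strictly positive, so some $\tau_t$ beats $\tau$, contradicting the optimality in (i) and the uniqueness of the maximizing direction from Proposition~\ref{prop:unicitytau}.

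The step I expect to be the main obstacle is establishing the weight dictionary of the first paragraph rigorously: one must check that passing to the $\tau$-limit $\monz$ really leaves the $\tau'$-weight unchanged for $\tau'\in X_*(T)$ commuting with $\tau$, that the moves $g'\in L_{alg}^{<0}L$ realize all relevant $L$-destabilizations while fixing the reduction to $P$, and that the shift character $-\mu^\Li(\monx,\tau)\tfrac{(\tau,\square)}{\Vert\tau\Vert}$ is exactly the one making the two semistability conditions match. Once this is in place, together with the convexity of Proposition~\ref{prop:muetaconvex} and the existence and uniqueness furnished by Propositions~\ref{prop:Mbiendef} and~\ref{prop:unicitytau}, the two tilting arguments are routine.
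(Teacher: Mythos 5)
Your ``weight dictionary'' is the crux, and as stated it is false, not merely delicate. For commuting one parameter subgroups $\tau,\tau'$, passing to the limit $\monz=\lim_{s\to 0}\tau(s)\monx$ does change the $\tau'$-weight in general: the set of $\hat T$-weights supporting $\monz$ is only a face (the $\tau$-minimal one) of the set supporting $\monx$, so one only has the one-sided inequality $\mu^\Li(g'\monz,\tau')\le \mu^\Li(g'\monx,\tau')$, which can be strict. (Already for $\SL_3$ acting on $\PP^2$, with $\monx=[1:1:1]$, $\tau$ of weights $(1,1,-2)$ and $\tau'$ of weights $(1,-1,0)$, the limit is $[0:0:1]$ and the $\tau'$-weight drops from $1$ to $0$.) This asymmetry cuts your two directions apart. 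Your contrapositive proof of (i)$\Rightarrow$(ii) survives, because it only uses the inequality in the direction that holds: Levi instability at $\monz$ gives $\mu^\Li(g'\monx,\tau')\ge\mu^\Li(g'\monz,\tau')>\langle\chi,\tau'\rangle$ with $\chi=\mu^\Li(\monx,\tau)\frac{(\tau,\square)}{\Vert\tau\Vert}$, and your tilting computation is then correct, since $\mu^\Li(g'\monx,\tau)=\mu^\Li(\monx,\tau)$ (here $g'$ centralizes $\tau$ and the $\tau$-weight of the fiber is locally constant on the fixed locus) and $\zeta\mapsto\mu^\Li(g'\monx,\zeta)$ is convex by Proposition~\ref{prop:muetaconvex}. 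But (ii)$\Rightarrow$(i) collapses: there you must bound $\mu^\Li(g''\monx,\tau'')$ from \emph{above} for every $g''\in L^{<0}_{alg}G$ and every dominant $\tau''$, while semistability of the Levi bundle only controls weights at $\monz$ (wrong side of the inequality) and only for group elements $g'\in L^{<0}_{alg}L$ (a much smaller group than the one over which the competitor $g''$ ranges). No orthogonal decomposition plus Cauchy--Schwarz can bridge this as long as the comparison is made pointwise between $\monx$ and $\monz$.

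The paper's proof avoids the problem by never comparing $\mu$-weights of $\monx$ and $\monz$; it compares \emph{parabolic reductions}. Parabolic reductions of $\parE_P/R^u(P)$ correspond to reductions of $\parE$ to parabolics $Q\subset P$ refining $\sigma$, with equal parabolic degrees after the twist by $\chi$ (Lemma~\ref{lem:pardegmu}); and the reduction attached to any competitor $(g''\monx,\tau'')$ admits a common generic toric refinement $\eta$ with $\sigma$, exactly as in the proof of Proposition~\ref{prop:unicitytau}. For such an $\eta$, the face $\Face_\tau$ of $\poly_\eta$ cut out by $\langle\tau,\square\rangle=\mu_\eta(\tau)$ is precisely the polytope of the Levi bundle relative to $\Li$. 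Hence (ii) says $0\in\Face_\tau-\chi$, i.e.\ $\chi\in\poly_\eta$, and then $\mu^\Li(g''\monx,\tau'')=\mu_\eta(\tau'')\le\langle\chi,\tau''\rangle\le\Vert\chi\Vert\,\Vert\tau''\Vert$, which is (i); conversely, (i) together with the convexity of $\mu_\eta$ and the characterization in Section~\ref{sec:cvxe} of the point of a polytope nearest the origin says exactly that $\chi\in\Face_\tau$, which after shifting is (ii). So the genuine content you are missing is this reduction-level correspondence (face of $\poly_\eta$ $=$ Levi polytope), and the step you flagged as ``the main obstacle'' is not a verification to be carried out but a statement that is false; the repair is the polytope argument, not a refinement of your dictionary.
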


\begin{proof}
Set $\Li'=\Li\otimes
-\mu^\Li(\monx,\tau)\frac{(\tau,\square)}{\Vert\tau\Vert}$ and 
$\chi=\mu^\Li(\monx,\tau)\frac{(\tau,\square)}{\Vert\tau\Vert}$.
  Assume that
  $M^\Li(\monx)=\frac{\mu^\Li(\monx,\tau)}{\Vert\tau\Vert}$.
Let $\zeta\in X_*(T)$ and $l\in L^{>0}_{alg}L$. We have to prove that
\begin{eqnarray}
  \label{eq:ineqLzeta}
  \mu^\Li(l\monx,\zeta)\leq \langle \chi,\tau\rangle.
\end{eqnarray}
Consider the parabolic reduction
$\sigma_\zeta\,:\,\PP^1\longto \parE/Q$ induced by $l\monx$ and
$\zeta$. 
By construction, there exists an embedding $Q\subset P$ such that if
$p\,:\,\parE/Q\longto \parE/P$ denotes the corresponding projection,
we have $p\circ\sigma_\zeta=\sigma$.

Consider now a generic reduction $\eta$ to $\parE/T$ and
$q\,:\,\parE/T\longto \parE/Q$ such that $\sigma_\zeta=q\circ\eta$.
Consider the convex function $\mu_\eta$ and the polytope $\poly_\eta$.
Since $\frac{\mu_\eta(\tau)}{\Vert\tau\Vert}=M^\Li(\eta)$,
the point $\chi$
belongs to $\poly_\eta$.
Recall that $\Face_\tau$ denote the face of $\poly_\tau$ corresponding
to the inequality $\langle\tau,\square\rangle\geq
\mu^\Li(\monx,\tau)$. 
This face is the polytope of $\parE_P/R^u(P)$ for $\eta$ relatively to
$\Li$.
Hence the polytope of $\parE_P/R^u(P)$ for $\eta$ relatively to
$\Li'$
is $\Face-\chi$. 
It contains $0$. Inequality~\eqref{eq:ineqLzeta} follows.

Conversely, assume that $\parE_P/R^u(P)$ is semistable for $L$ relatively to the line
  bundle $\Li'$.
Consider  a generic reduction $\eta$ to $\parE/T$ and
$p_P\,:\,\parE/T\longto \parE/P$ such that $\sigma=p_P\circ\eta$.
By the usual argument it is sufficient to prove that 
\begin{eqnarray}
  \label{eq:MLetatoprove}
  M^\Li(\eta)=\frac{\mu_\eta(\tau)}{\Vert\tau\Vert}.
\end{eqnarray}
The polytope $\poly_L$ of $\parE/R^u(P)$ for $\eta$ relatively to $\Li'$ is the
convex hull of the points $\chi_B-\chi$ for various $B$ such that
$T\subset B\subset P$.
This polytope is contained in $\poly_\eta-\chi$ which is the convex
hull of the points $\chi_B$ for various $B\supset T$.
By assumption, $0$ belongs to $\poly_L$. 
Hence $\chi$ belongs to $\poly_\eta$. Equality~\eqref{eq:MLetatoprove}
 follows.
\end{proof}

\bigskip
The following proposition is analogous to \cite[Theorem~9.3]{Ne2} or
\cite[Proposition~1.9]{RaRa}.

\begin{prop}
  \label{prop:limadap}
Assume that $\frac{\lambda_1}\bl$, $\frac{\lambda_2}\bl$ and $\frac{\lambda_3}\bl$ belong to the
  alcove $\alc^*$. 
Let $\monx\in\mvar$ be unstable.
Let $g$  in  $L^{<0}_{alg}G$ and let $\tau$ be
the dominant indivisible one parameter subgroup of $T$ such that
$$
M^\Li(\monx)=\frac{\mu^\Li(gx,\tau)}{\Vert\tau\Vert}.
$$
Set $\monx_0=\lim_{s\to 0}\tau(s)g\monx$. Then
\begin{enumerate}
\item $M^\Li(\monx_0)=M^\Li(\monx)$;
\item $M^\Li(\monx_0)=\frac{\mu^\Li(\monx_0,\tau)}{\Vert\tau\Vert}.$
\end{enumerate}
\end{prop}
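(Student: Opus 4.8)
The plan is to reduce everything to the adaptedness criterion of Proposition~\ref{prop:adapLss}, exploiting that both the numerical function $\mu^\Li(\cdot,\tau)$ and the flagged $L$-bundle attached to $\tau$ depend on a point of $\mvar$ only through its limit under $\tau$.

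First I would record two elementary facts. By the definition of $\mu^\Li$ recalled in Section~\ref{sec:defxss}, the integer $\mu^\Li(y,\tau)$ is computed from $\monz=\lim_{s\to 0}\tau(s)y$ together with the weight of $\tau$ on the fibre $\Li_\monz$; hence it depends on $y$ only through $\lim_{s\to 0}\tau(s)y$. Applying this to $y=g\monx$ and to $y=\monx_0=\lim_{s\to 0}\tau(s)g\monx$, which is $\tau$-fixed, gives
\[
\mu^\Li(\monx_0,\tau)=\mu^\Li(g\monx,\tau),
\]
so that, by hypothesis, $\frac{\mu^\Li(\monx_0,\tau)}{\Vert\tau\Vert}=\frac{\mu^\Li(g\monx,\tau)}{\Vert\tau\Vert}=M^\Li(\monx)$. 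Taking the identity and the one parameter subgroup $\tau$ in the supremum defining $M^\Li(\monx_0)$ already yields $M^\Li(\monx_0)\geq M^\Li(\monx)>0$, so $\monx_0$ is unstable and the canonical-reduction machinery applies to it.

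Next I would invoke Proposition~\ref{prop:adapLss} twice. Since $M^\Li$ is invariant under $L^{<0}_{alg}G$, the hypothesis reads $M^\Li(g\monx)=\frac{\mu^\Li(g\monx,\tau)}{\Vert\tau\Vert}$, that is, condition (i) of Proposition~\ref{prop:adapLss} for $g\monx$ and $\tau$. Its condition (ii) then asserts that the Levi bundle $\parE_P/R^u(P)$ of $g\monx$, with its induced flags, is semistable for $L$ relatively to $\Li'=\Li\otimes-\mu^\Li(g\monx,\tau)\frac{(\tau,\square)}{\Vert\tau\Vert}$. The key geometric observation, contained in the discussion preceding Proposition~\ref{prop:adapLss}, is that the flagged $L$-bundle attached to a point $y$ via the parabolic reduction associated to $\tau$ is read off directly from $\lim_{s\to 0}\tau(s)y\in\mvar^\tau$; since $g\monx$ and $\monx_0$ share the same limit $\monx_0$, the Levi bundle of $\monx_0$ coincides with that of $g\monx$. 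Because $\mu^\Li(\monx_0,\tau)=\mu^\Li(g\monx,\tau)$, the twisting line bundle is the same $\Li'$, so condition (ii) of Proposition~\ref{prop:adapLss} holds for $\monx_0$ and $\tau$. Reading the equivalence backwards gives condition (i) for $\monx_0$, namely $M^\Li(\monx_0)=\frac{\mu^\Li(\monx_0,\tau)}{\Vert\tau\Vert}$, which is assertion (ii); combined with $\frac{\mu^\Li(\monx_0,\tau)}{\Vert\tau\Vert}=M^\Li(\monx)$ it yields assertion (i) as well.

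The step I expect to require the most care is the identification of the Levi bundle of $\monx_0$ with that of $g\monx$: one must check that the principal $L$-bundle $\parE_P/R^u(P)$ and the flags $\xi_i^L$ genuinely factor through the limit point $\monx_0\in\mvar^\tau$, so that degenerating $g\monx$ to $\monx_0$ does not alter the associated graded flagged $L$-bundle. Once this is granted, the remainder is bookkeeping with the two elementary properties of $\mu^\Li$ and the $L^{<0}_{alg}G$-invariance of $M^\Li$.
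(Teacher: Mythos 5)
Your proof is correct and follows essentially the same route as the paper's: both apply Proposition~\ref{prop:adapLss} twice, once (forward) to $g\monx$ to obtain semistability of the Levi bundle relatively to the twisted line bundle, and once (backward) to $\monx_0$ to conclude, using that $\mu^\Li(\cdot,\tau)$ and the associated flagged $L$-bundle depend only on the limit under $\tau$. The paper's version is just terser, leaving implicit the points you spell out (the $L^{<0}_{alg}G$-invariance of $M^\Li$ and the identification of the two Levi bundles via the discussion preceding Proposition~\ref{prop:adapLss}).
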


\begin{proof}
  Since $\mu^\Li(\monx,\tau)=\mu^\Li(\monx_0,\tau)$, the second
  assertion implies the first one. 
By Proposition~\ref{prop:adapLss}, applied to $\monx=\monx_0$, the second assertion is equivalent
to the fact that $\parE/R^u(P)$ is semistable relatively to
$\Li\otimes
-\mu^\Li(\monx,\tau)\frac{(\tau,\square)}{\Vert\tau\Vert}$.
But, by Proposition~\ref{prop:adapLss}, applied to $\monx$, this is true.
\end{proof}

\subsection{The set of numericaly semistable points}

For later use let us state the following well known result.

\begin{lemma}\label{lem:ssopen}
  Assume that $\frac{\lambda_1}\bl$, $\frac{\lambda_2}\bl$ and $\frac{\lambda_3}\bl$ belong to the
  alcove $\alc^*$. Then $\mvar^{\rm nss}(\Lila)$ is open in $\mvar$.
\end{lemma}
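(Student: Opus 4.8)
The plan is to show that the complement of $\mvar^{\rm nss}(\Li)$, the unstable locus, is closed, and to realize $\mvar^{\rm nss}(\Li)$ as a union of open subsets on which an invariant section does not vanish. First I would use Lemma~\ref{lem:vacuasection}: for every $n>0$ the space of $L^{<0}_{alg}G$-invariant sections of $\Li^{\otimes n}$ is the dual of a vacua space, hence finite dimensional and nonzero for suitable $n$. For such an invariant section $s$, the subset $\{s\neq 0\}\subset\mvar$ is open, and any $\monx$ in it is numerically semistable: since $s$ is $L^{<0}_{alg}G$-invariant one has $s(g\monx)\neq 0$ for every $g\in L^{<0}_{alg}G$, and a section that does not vanish at $g\monx$ cannot have strictly positive weight along the limit $\lim_{t\to 0}\tau(t)g\monx$, which forces $\mu^\Li(g\monx,\tau)\leq 0$ for all dominant $\tau$. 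This yields the open inclusion $\bigcup_{n,s}\{s\neq 0\}\subseteq\mvar^{\rm nss}(\Li)$.

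The heart of the argument is the reverse inclusion: every numerically semistable point should carry a nonvanishing invariant section of some power of $\Li$. This is the Hilbert--Mumford/Kempf--Ness direction transported to the affine Grassmannian, and it is where the main difficulty lies, since $\mvar$ is infinite dimensional and one cannot quote finite-dimensional GIT verbatim. I would deduce it from the finiteness recorded in Proposition~\ref{prop:Mbiendef}: for an unstable $\monx$ the quantity $M^\Li(\monx)$ is finite and attained by an optimal pair $(g,\tau_0)$, and by Lemma~\ref{lem:calculmu} it is computed on the finitely many Peterson strata $S_{-h}\times\prod_i Uw_i^{-1}B/B$ met by the orbit. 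Consequently the instability is detected on a finite-dimensional sub-situation, on which the classical Hilbert--Mumford theorem applies; transferring the resulting section back to $\mvar$ is the delicate bookkeeping step.

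Alternatively, and more in the spirit of the constructions above, I would prove directly that the unstable locus is closed by showing that $\monx\mapsto M^\Li(\monx)$ is upper semicontinuous and that its positive values are bounded away from $0$. The upper semicontinuity, namely that instability can only increase under specialization, should follow from the limiting behaviour of canonical reductions in Propositions~\ref{prop:unicitytau}, \ref{prop:adapLss} and \ref{prop:limadap}, combined with the closure relations \eqref{eq:Shbar} for the strata $S_h$. The gap at $0$ follows from Lemma~\ref{lem:calculmu}, since the values $\mu^\Li(\monx,\tau)$ are integral and, by Proposition~\ref{prop:Mbiendef}, only finitely many $h$ and $w_i$ together with a single rational optimal $\tau$ occur near a given point, so that $M^\Li$ takes locally finitely many values; hence $\{M^\Li>0\}$ agrees locally with $\{M^\Li\geq\varepsilon\}$ for some $\varepsilon>0$ and is closed. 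Converting this upper semicontinuity into genuine openness through the discreteness of the values is, I expect, the most delicate point to render fully rigorous.
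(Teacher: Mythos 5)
Before comparing, note that the paper does not prove this lemma at all: it is stated as ``well known'', and the only place the relevant technique appears is in the proof of Proposition~\ref{prop:openstratum}, where openness of the open stratum is reduced, via the valuative criterion of openness, to a semicontinuity statement over a discrete valuation ring established by Behrend's argument. Measured against that benchmark, neither of your two routes is complete. In the sections route, the inclusion you defer as ``delicate bookkeeping'' --- that every numerically semistable point carries a nonvanishing $L^{<0}_{alg}G$-invariant section of some power of $\Li$ --- is the full Hilbert--Mumford/Kempf theorem for an ind-group acting on an ind-variety, and nothing you cite supplies it: Proposition~\ref{prop:Mbiendef} and Lemma~\ref{lem:calculmu} give finiteness and attainment of $M^\Li(\monx)$ at one fixed point, they produce no sections, and there is no finite-dimensional reduction, since the orbit $L^{<0}_{alg}G\monx$ is not contained in any finite-dimensional piece of $\mvar$. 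Your preliminary claim that the invariant sections are ``nonzero for suitable $n$'' is also false in general: when $(\lambda_1,\lambda_2,\lambda_3,\bl)\notin\cone^{\rm nss}(\mvar)$, all of these spaces vanish. So route 1 cannot be repaired without importing a theorem strictly stronger than the lemma itself.

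Route 2 is the right strategy (it is exactly the paper's mechanism for Proposition~\ref{prop:openstratum}), but both of its inputs are unestablished as you state them. First, upper semicontinuity of $M^\Li$ is a statement about families, whereas Propositions~\ref{prop:unicitytau}, \ref{prop:adapLss} and \ref{prop:limadap} all concern a single fixed point (uniqueness of the destabilizing $\tau$, semistability of the associated graded, behaviour under $\monx\mapsto\lim_{s\to 0}\tau(s)\monx$); neither they nor the closure relations~\eqref{eq:Shbar} control how the optimal pair $(g,\tau)$ degenerates in a family. The missing ingredient is precisely the DVR lemma: for a flagged bundle $\parE$ on $\PP^1\times{\rm Spec}(R)$ one has $M^\Li(\parE_0)\geq M^\Li(\parE_\eta)$, proved by Behrend's argument. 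Second, your discreteness claim is unjustified: the set $\Theta$ of strata met by an orbit is infinite in general (a priori all $h\leq -w_0h_0$ occur), so ``$M^\Li$ takes locally finitely many values'' does not follow from Proposition~\ref{prop:Mbiendef}. What you actually need, and what is true, is a uniform gap: the positive values of $M^\Li$ on all of $\mvar$ are bounded below by some $\varepsilon>0$. This holds because, by Lemma~\ref{lem:calculmu}, $M^\Li(\monx)$ is the norm of the orthogonal projection onto the dominant chamber of a vector $\bl h+\sum_i w_i^{-1}\lambda_i$ lying in a fixed lattice $\frac 1N X^*(T)$ (after clearing the denominators of the $\lambda_i$ and using the identification given by $(\ ,\ )$), and the projections of a fixed lattice onto the finitely many rational faces of the chamber are again contained in lattices, whose nonzero vectors have norms bounded below. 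That argument, or an equivalent one, must be made explicitly; it is not the one you gave.
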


\subsection{The open stratum}

In this section, we assume that no point in $\mvar$ is numericaly  semistable
relatively to $\Li$; that is that $M^\Li(\monx)>0$ for any
$\monx\in\mvar$.
Set
$$
d_0=\inf_{\monx\in \mvar}M^\Li(\monx),
$$
and 
$$
\mvar^\circ(\Li)=\{\monx\in\mvar\,:\,M^\Li(\monx)=d_0\}.
$$
This subset of $\mvar$ is called the {\it open stratum}. This term is
justified by the following proposition.

\begin{prop}
  \label{prop:openstratum}
  \begin{enumerate}
  \item The set $\mvar^\circ(\Li)$ is open and nonempty.
\item For any $\monx$ and $\mony$ in $\mvar^\circ(\Li)$, $M^\Li(\monx)=M^\Li(\mony)$.
  \item All points in  $\mvar^\circ(\Li)$ have the same indivisible
    dominant adapted one parameter subgroup of $T$. Let $\tau^\circ$
    denote this 1-PS.
\item Set $P=P(\tau^\circ)$. There exists an $(L_{alg}P)_0\times
  P^3$-orbit in $\mvar$ such that for any $\monx\in\mvar^\circ(\Li)$
  and any $g\in L_{alg}^{<0}G$ such that
  $\mu^\Li(g\monx,\tau^\circ)=\Vert\tau^\circ\Vert. d_0$, we have
  $g\monx\in C^+$.
  \end{enumerate}
\end{prop}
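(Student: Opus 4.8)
The plan is to realize $\mvar^\circ(\Li)$ as the lowest, hence open and dense, piece of the Hesselink--Kirwan type instability stratification of $\mvar$, using the machinery already in place. Assertion (ii) is immediate, since $M^\Li(\monx)=M^\Li(\mony)=d_0$ by definition. For the rest, the guiding observation is that Proposition~\ref{prop:unicitytau} attaches to every unstable $\monx$ a well-defined HN-type $(\tau_0,P,h,w_1,w_2,w_3)$, and that by Lemma~\ref{lem:calculmu} the number $M^\Li(\monx)=\mu^\Li(g\monx,\tau_0)/\Vert\tau_0\Vert$ depends only on this type. Thus the level sets of $\monx\mapsto M^\Li(\monx)$ refine into strata $S_\theta$ indexed by HN-types $\theta$, and the whole proof amounts to showing that a single stratum realizes the minimal value $d_0$ and that it is open.

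First I would prove that $d_0$ is attained and that $\{M^\Li=d_0\}$ is open. The heart of attainment is that the values of $M^\Li$ form a discrete subset of $\RR_{>0}$. For this I would rerun the finiteness argument of Proposition~\ref{prop:Mbiendef} uniformly, now reading $M^\Li(\monx)$ as the distance from $0$ to the polytope $\poly_\eta$ attached to a generic reduction $\eta$ as in Section~\ref{sec:cvxe}: the alcove hypothesis (used exactly as in Proposition~\ref{prop:muetaconvex}) together with the Peterson--Woodward normal form of Lemma~\ref{lem:PW} bounds the admissible data $(h,w_1,w_2,w_3)$ relative to each of the finitely many $P$, so that only finitely many types satisfy $M^\Li\le c$ for any bound $c$. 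Discreteness follows, so the infimum $d_0$ is a minimum and $d_0>0$. For openness I would invoke the standard stratum-closure relation, namely that $\overline{S_\theta}\setminus S_\theta$ meets only strata of strictly larger $M^\Li$-value; this upper semicontinuity is established precisely as the openness of $\mvar^{\rm nss}(\Li)$ in Lemma~\ref{lem:ssopen}, transported through Proposition~\ref{prop:adapLss} to semistability of the Levi bundle $\parE_P/R^u(P)$. Combined with discreteness, $\{M^\Li=d_0\}=\{M^\Li<d_0+\epsilon\}$ is open and nonempty.

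For (iii) and (iv) I would show that exactly one type occurs on $\mvar^\circ(\Li)$. Since $\mvar=\Gra\times(G/B)^3$ is irreducible, the affine Grassmannian being the directed union of the irreducible $\overline{\Gra_h}$, the open nonempty set $\mvar^\circ(\Li)$ is dense and connected. Inside $\mvar^\circ(\Li)$ each stratum $S_\theta$ of value $d_0$ is closed, because its closure adds only points of higher $M^\Li$-value lying outside $\mvar^\circ(\Li)$, and it is open, because by discreteness its complement is a finite union of such closed strata. A connected space is not a disjoint union of more than one nonempty clopen set, so a single type $\theta^\circ=(\tau^\circ,P,h,w_1,w_2,w_3)$ occurs; this is the common indivisible dominant adapted one-parameter subgroup $\tau^\circ$ of (iii). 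Setting $P=P(\tau^\circ)$, I would take $C^+$ to be the $(L_{alg}P)_0\times P^3$-orbit $S^P_{-h}\times Pw_1^{-1}B/B\times Pw_2^{-1}B/B\times Pw_3^{-1}B/B$ determined by $\theta^\circ$. For $\monx\in\mvar^\circ(\Li)$ and $g\in L^{<0}_{alg}G$ with $\mu^\Li(g\monx,\tau^\circ)=\Vert\tau^\circ\Vert d_0$, Proposition~\ref{prop:unicitytau} forces the canonical data of $g\monx$ to be $\theta^\circ$, while Lemma~\ref{lem:calculmu} places $g\monx$ in $S_{-h}\times\prod_i Uw_i^{-1}B/B\subseteq C^+$, which is (iv).

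The hard part will be the combination of upper semicontinuity of $M^\Li$ with the discreteness of its values in this affine, infinite-dimensional setting: one must prevent the instability measure from accumulating toward $d_0$ from below and prevent the optimal one-parameter subgroup from drifting through infinitely many nearby candidates of comparable norm. This is exactly where the convexity of $\mu_\eta$ (Proposition~\ref{prop:muetaconvex}), the attainment in Proposition~\ref{prop:Mbiendef}, and the limit-stability transfer of Propositions~\ref{prop:limadap} and~\ref{prop:adapLss} must be used together; once $M^\Li$ is known to be upper semicontinuous with isolated values, the connectedness argument delivering (iii) and (iv) is formal.
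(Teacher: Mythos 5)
Your architecture --- realizing $\mvar^\circ(\Li)$ as the minimal stratum of a Hesselink--Kirwan type instability stratification --- is the same frame as the paper's own proof, which reduces openness to a semicontinuity statement for families over a discrete valuation ring (via the valuative criterion) and then invokes Behrend's proof of \cite[Proposition~7.1.3]{Behrend:thesis} both for that statement and for the remaining assertions. Your handling of (ii) (immediate from the definition of $\mvar^\circ(\Li)$) and your deduction of (iv) from (iii) are sound: by Proposition~\ref{prop:unicitytau} two optimal destabilizations of a given $\monx$ differ by an element of $L^{<0}_{alg}P$, and the class of $h$ modulo $X_*(T\cap L^{\rm ss})$ together with the $w_i\in W^P$ determines the $(L_{alg}P)_0\times P^3$-orbit $C^+$, so constancy of the type over $\mvar^\circ(\Li)$ gives (iv).

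The problem is that the two load-bearing claims of your argument are genuine gaps, and they are exactly the points the paper fills by citing Behrend. First, the discreteness of the set of values of $M^\Li$ over all of $\mvar$ --- which you need both for attainment of $d_0$ (hence nonemptiness in (i)) and for finiteness of the types occurring at level $d_0$ in your clopen/connectedness argument --- is not established by what you invoke. The finiteness in Proposition~\ref{prop:Mbiendef} is pointwise: the bound $h\leq -w_0h_0$ depends on the Birkhoff stratum $\Gra^{h_0}$ of the given point, and $h_0$ is unbounded as $\monx$ runs over the ind-variety $\mvar$. Lemma~\ref{lem:PW} cannot repair this: the Peterson--Woodward representative is constrained only along the roots of $\Phi^+\cap\Phi(L)$, while the components of $h$ along $\Delta\setminus\Delta_P$ --- the actual degree of the destabilizing reduction --- remain arbitrary; and since the adapted $\tau$ ranges over all rational dominant directions, numbers of the form $\mu_\eta(\tau)/\Vert\tau\Vert$ with $\mu_\eta(\tau)$ in a fixed lattice can a priori accumulate. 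What is really needed here is a boundedness theorem (finitely many canonical types of instability $\leq c$), which is the content of Behrend's argument, not a consequence of the cited internal results. Second, the stratum-closure relation (``$\overline{S_\theta}\setminus S_\theta$ meets only strata of strictly larger $M^\Li$'') is precisely the semicontinuity $M^\Li(\parE_0)\geq M^\Li(\parE_\eta)$ over a DVR; it cannot be obtained by ``transporting'' Lemma~\ref{lem:ssopen}, which concerns only the locus $\{M^\Li\leq 0\}$ and is itself quoted without proof, through Proposition~\ref{prop:adapLss}. You correctly flag this as the hard part, but flagging it is not proving it; without independent arguments for these two claims (or an explicit appeal to \cite{Behrend:thesis} or \cite{Heinloth:semistable}, as the paper makes), your proof of (i) and (iii) does not close.
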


\begin{proof}
By the valuative criterion of openness, to prove the openness of
$\mvar^\circ(\Li)$ it is sufficient to prove the
following lemma.

\begin{lemma}
  Let $R$ be a discrete valuation ring and set $S={\rm Spec}(R)$. Let
  $\eta$ denote the generic point of $S$ and let $0$ denote the
  special one. 
Let $\parE$ be a flagged bundle on $\PP^1\times S$.

Then $M^\Li(\parE_0)\geq M^\Li(\parE_\eta)$.
\end{lemma}

The Behrend's proof of \cite[Proposition 7.1.3]{Behrend:thesis}
applies here. His proof also shows the end of the proposition.



Another useful reference is \cite[Proposition 2]{Heinloth:semistable}.
\end{proof}

\section{Gromov-Witten invariants and affine grassmannian}

\subsection{The homogeneous space $L_{alg}^{<0}G/L_{alg}^{<0}P$}

Let $P$ be a standard parabolic subgroup of $G$.
Let $\ud\in \Hom(X^*(P),\ZZ)$. 
We denote by $\Mor(\PP^1,G/P,\ud)$ the set of regular maps from $\PP^1$
to $G/P$ of degree $\ud$. It is empty or a quasiprojective variety. 
The disjoint union of the $\Mor(\PP^1,G/P,\ud)$ when $\ud$ runs over
$\Hom(X^*(P),\ZZ)$ is denoted by $\Mor(\PP^1,G/P)$.

Let  $g\in L_{alg}^{<0}G$. Then $g\circ\iota_\infty^{-1}$ (with
notation of Section~\ref{sec:parb}) is a
regular map from $\PP^1-\{0\}$ to $G$. By composition with
the projection $G\longto G/P$, one obtains a regular map from
 $\PP^1-\{0\}$ to $G/P$. Since $G/P$ is proper, this maps extend to
 $\PP^1$.
Let $\Theta(g)\in \Mor(\PP^1,G/P)$ denotes this map.
Observe that, for any $g,g'\in L_{alg}^{<0}G$, $\Theta(g)=\Theta(g')$
is and only if $g'^{-1}g\in L_{alg}^{<0}P$. Hence we just construct an
injective map
$$
\begin{array}{cccc}
  \Theta\,:&L_{alg}^{<0}G/L_{alg}^{<0}P&\longto&\Mor(\PP^1,G/P)\\
&gL_{alg}^{<0}P&\longmapsto&\Theta(g).
\end{array}
$$
Fix $\gamma\in\Mor(\PP^1,G/P)$. Since any $P$-principal
bundle on $\PP^1-\{0\}$ is trivial, the restriction of $\gamma$ raises
to $G$. Hence $\gamma$  belongs to the image of $\Theta$ that is surjective.

\bigskip
Recall that 
$$
L_{alg}G=\bccup_{h\in \oplus_{\alpha\in\Delta-\Delta_P}\ZZ\alpha^\vee}L_{alg}^{>0}Gz^{h}(L_{alg}P)_0.
$$

\begin{lemma}
  \label{lem:degdec}
Let $g\in L_{alg}^{<0}G$ and $h\in
\oplus_{\alpha\in\Delta-\Delta_P}\ZZ\alpha^\vee\simeq\Hom(X^*(P),\ZZ)$. 
Then $\Theta(gL_{alg}^{<0}P)$ has degree $h$ if and only if $g$
belongs to $L_{alg}^{>0}G z^{h}(L_{alg}P)_0$.
\end{lemma}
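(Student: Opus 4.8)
The plan is to compute the degree $\ud(\Theta(gL_{alg}^{<0}P))\in\Hom(X^*(P),\ZZ)$ directly and to match it against the label $h$ of the piece of the decomposition containing $g$, reducing everything to a one-variable valuation count as in Lemma~\ref{lem:degphi}. Since the decomposition $L_{alg}G=\bccup_{h}L_{alg}^{>0}Gz^{h}(L_{alg}P)_0$ recalled just above is a \emph{disjoint} union, each $g$ lies in a unique piece, say the one labelled $h_0$; moreover both the label and $\Theta(g)$ depend only on the coset $gL_{alg}^{<0}P$ (one checks $L_{alg}^{<0}P\subset(L_{alg}P)_0=\Ker\Chi$, and $\Theta$ factors through the coset by construction). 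Hence the asserted equivalence amounts to the single identity $\ud(\Theta(g))=h_0$. As $X^*(P)=\bigoplus_{\beta\in\Delta-\Delta_P}\ZZ\varpi_\beta$ with $\{\varpi_\beta\}$ dual to $\{\beta^\vee\}$, it suffices to evaluate on each $\varpi_\beta$, i.e. to compute $\deg\bigl(\Theta(g)^*\mathcal{L}_{\varpi_\beta}\bigr)$, where $\mathcal{L}_{\varpi_\beta}$ is the line bundle on $G/P$ attached to $\varpi_\beta\in X^*(P)$.

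Fix such a $\beta$ and embed $G/P\hookrightarrow\PP(V(\varpi_\beta))$ by $gP\mapsto g[v_{\varpi_\beta}]$, $v_{\varpi_\beta}$ a highest weight vector. Because $\langle\varpi_\beta,\alpha^\vee\rangle=0$ for every $\alpha\in\Delta_P$, the line $[v_{\varpi_\beta}]$ is $P$-stable, $v_{\varpi_\beta}$ is a $P$-eigenvector of weight $\varpi_\beta$, and $\mathcal{L}_{\varpi_\beta}$ is the pullback of $\mathcal{O}_{\PP(V(\varpi_\beta))}(1)$. Over $\PP^1-\{0\}$ the composite of $\Theta(g)$ with this embedding is $z\mapsto[g(z)v_{\varpi_\beta}]$. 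Writing $g=g_+z^{h}p_0$ with $g_+\in L_{alg}^{>0}G$ and $p_0\in(L_{alg}P)_0$, I would use three facts: $p_0(z)v_{\varpi_\beta}=\varpi_\beta(p_0(z))\,v_{\varpi_\beta}$; the condition $p_0\in\Ker\Chi$ forces $\varpi_\beta(p_0(z))=\lambda z^{\Chi(p_0)(\varpi_\beta)}=\lambda$ to be a nonzero constant; and $z^{h}v_{\varpi_\beta}=z^{\langle\varpi_\beta,h\rangle}v_{\varpi_\beta}$. Together these give $g(z)v_{\varpi_\beta}=\lambda\,z^{\langle\varpi_\beta,h\rangle}\,g_+(z)v_{\varpi_\beta}$. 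Here the $(L_{alg}P)_0$-factor contributes nothing to the exponent, and this is exactly where its defining condition $\Chi(p_0)=0$ is needed.

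It then remains to read off the degree of $z\mapsto[g(z)v_{\varpi_\beta}]$, which is the difference of the extreme exponents of the Laurent vector $g(z)v_{\varpi_\beta}$. The factor $g_+(z)v_{\varpi_\beta}$ is polynomial with $g_+(0)v_{\varpi_\beta}\neq0$ (as $g_+\in L_{alg}^{>0}G$), so the lowest exponent, i.e. the order at $z=0$, is exactly $\langle\varpi_\beta,h\rangle$; on the other hand $g\in L_{alg}^{<0}G$ is regular and nonvanishing at $z=\infty$, so the highest exponent is $0$ (equivalently, the order at $z=\infty$ vanishes). Exactly as in the proof of Lemma~\ref{lem:degphi}, combining these two local orders yields $\deg\bigl(\Theta(g)^*\mathcal{L}_{\varpi_\beta}\bigr)=-\langle\varpi_\beta,h\rangle$; letting $\beta$ vary identifies $\ud(\Theta(g))$ with $h_0$ under the isomorphism $\bigoplus_{\alpha}\ZZ\alpha^\vee\simeq\Hom(X^*(P),\ZZ)$ of the statement, and the equivalence follows. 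I expect the only real obstacle to be the orientation/sign bookkeeping between the two charts---balancing the order $\langle\varpi_\beta,h\rangle$ at $0$ against the vanishing order at $\infty$---rather than any substantial difficulty; the three factors $g_+$, $z^{h}$, $p_0$ each play a clearly separated role (pinning the order at $0$, carrying the exponent, and contributing nothing, respectively).
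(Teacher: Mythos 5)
Your proof is correct, and at its heart it performs the same two-sided exponent count as the paper; but the route is genuinely different and worth comparing. The paper, after the same reduction to one implication via disjointness of $L_{alg}G=\bccup_h L_{alg}^{>0}Gz^h(L_{alg}P)_0$, invokes Theorem~\ref{th:decPeterson} to produce two Iwahori-type factorizations $g\in\Iw^- w'z^{h'}L_{alg}U$ and $g\in\Iw w''z^{h+h''}L_{alg}U$ with $h',h''\in X_*(T\cap L^{\rm ss})$, feeds them into Lemma~\ref{lem:degphi} to get the degree of the curve in $G/B$, and only then pushes down to $G/P$, where the $X_*(T\cap L^{\rm ss})$-terms die against $X^*(P)$. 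You bypass both Theorem~\ref{th:decPeterson} and Lemma~\ref{lem:degphi}: starting from the coset factorization $g=g_+z^hp_0$ itself, the factor $p_0$ acts on $v_{\varpi_\beta}$ by a nonzero constant precisely because $\Chi(p_0)=0$ (this is where the definition of $(L_{alg}P)_0$ does the work that the $h',h''$ bookkeeping does in the paper), $z^h$ carries the exponent, $g_+\in L_{alg}^{>0}G$ pins the valuation at $z=0$, and membership $g\in L_{alg}^{<0}G$ pins the top exponent at $z=\infty$. What your route buys is self-containedness -- no Peterson decomposition, no discussion modulo $X_*(T\cap L^{\rm ss})$, and a clean separation of the roles of the three factors; what the paper's route buys is brevity, since Lemma~\ref{lem:degphi} is already proved. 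Finally, the sign you defer to ``bookkeeping'' does close up, and in your favor: your ample-convention answer $\deg\bigl(\Theta(g)^*\mathcal{L}_{\varpi_\beta}\bigr)=-\langle\varpi_\beta,h\rangle$ is exactly what the paper calls ``degree $h$'', because in Lemma~\ref{lem:degphi} the declared degree $h_2-h_1$ pairs with a dominant $\lambda$ to $-\deg(\phi_\lambda)$; that is, the paper identifies $\Pic$ with the character lattice through the bundles whose fiber at the base point has weight $+\lambda$ rather than through the ample ones, and under that (anti-ample) convention your computation lands on the statement with no residual discrepancy.
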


\begin{proof}
  By the decomposition just before the lemma, it is sufficient to
  prove that if $g$
belongs to $L_{alg}^{>0}G z^{h}(L_{alg}P)_0$ then
$\Theta(gL_{alg}^{<0}P)$ has degree $h$.

By Theorem~\ref{th:decPeterson}, there exist $w'\in W$ and 
$h'\in X_*(T\cap L^{\rm ss})$ such that $g$ belongs to
 $\Iw^- w' z^{h'}L_{alg}U$. Similarly,  there exist $w''\in W$ and 
$h''\in X_*(T\cap L^{\rm ss})$ such that $g$ belongs to
 $\Iw w'' z^{h+h''}L_{alg}U$. 
By Lemma~\ref{lem:degphi}, the curve $\Theta(g)\,:\,\PP^1\longto
G/B$ associated to $g$ has degree $h+h'-h''$. Hence
$\Theta(gL_{alg}^{<0}P)$ has degree $h$. 
\end{proof}

\subsection{Gromov-Witten invariants as ``degree''}

Fix  $w_1$, $w_2$, $w_3$ in $W^P$. With the notation of the
introduction, 
fix  also $\ud=\sum_{\beta\in\Delta-\Delta_P}d_\beta\sigma_{s_\beta}^*$
for some $d_\beta\in \ZZ_{\geq 0}$. 
Set $h=\sum_{\beta\in\Delta-\Delta_P}d_\beta\beta^\vee\in X_*(T)$.
Consider 
$$
C=L_{alg}L^{\rm ss}L_{-h}\times Lw_1^{-1}B/B \times Lw_2^{-1}B/B \times Lw_3^{-1}B/B
$$
and 
$$
C^+=(L_{alg}P)_0L_{-h}\times Pw_1^{-1}B/B \times Pw_2^{-1}B/B \times Pw_3^{-1}B/B.
$$
Observe that $L_{alg}^{<0}P$ is contained in $(L_{alg}P)_0$. 
In particular $C^+$ is stable by the action of $L_{alg}^{<0}P$.
Consider on 
$L_{alg}^{<0}G\times C^+$ the action of $L_{alg}^{<0}P$ given by the
formula 
$p.(g,\monx)=(gp^{-1},p\monx)$. This action is free and the quotient
set is denote by
 $L_{alg}^{<0}G\times_{L_{alg}^{<0}P} C^+$.
The class of $(g,\monx)$ in $L_{alg}^{<0}G\times_{L_{alg}^{<0}P} C^+$
is denoted by $[g :\monx]$.
Consider the map
$$
\begin{array}{cccc}
  \eta\,:&L_{alg}^{<0}G\times_{L_{alg}^{<0}P} C^+&\longto&\mvar\\
&[g:\monx]&\longmapsto&g\monx.
\end{array}
$$

Observe that $L_{alg}^{<0}G/{L_{alg}^{<0}P}$ and
$L_{alg}^{<0}G\times_{L_{alg}^{<0}P} C^+$ have no natural structure of
ind-varieties and are considered in this paper as sets. 

\begin{prop}
  \label{prop:etaGW}
Recall the definition of $n_\beta$ from  \eqref{eq:defnbeta}).
  \begin{enumerate}
  \item If $
  l(w_1)+l(w_2)+l(w_3)=\dim(G/P)+\sum_{\beta\in\Delta-\Delta_P}d_\beta n_\beta$
then for $\monx\in\mvar$ sufficiently general, the fiber $\eta^{-1}(\monx)$
has cardinality $GW(\sigma_{w_1},\sigma_{w_2},\sigma_{w_3};\ud)$.
\item If $
  l(w_1)+l(w_2)+l(w_3)=\dim(G/P)+\sum_{\beta\in\Delta-\Delta_P}d_\beta n_\beta$
then for $\monx\in\mvar$ sufficiently general, the fiber
$\eta^{-1}(\monx)$ is either empty or infinite.
  \end{enumerate}

\end{prop}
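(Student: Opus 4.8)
The plan is to identify the general fiber of $\eta$ with a set of honest rational maps $\PP^1\to G/P$ of degree $\ud$ subject to three Schubert conditions, and then to count them by Kleiman transversality together with the enumerativity of genus-zero three-pointed Gromov--Witten invariants of $G/P$.

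First I would unwind the fiber. Since $\eta([g:\monz])=g\monz$ and $C^+$ is stable under $L_{alg}^{<0}P\subset (L_{alg}P)_0$, the fiber $\eta^{-1}(\monx)$ is in bijection with $\{\,gL_{alg}^{<0}P:\ g^{-1}\monx\in C^+\,\}$, and through the bijection $\Theta$ of the previous subsection this is a set of morphisms $\gamma=\Theta(gL_{alg}^{<0}P):\PP^1\to G/P$. The map $\eta$ is equivariant for left multiplication of $L_{alg}^{<0}G$ on $g$, so $\#\eta^{-1}(g'\monx)=\#\eta^{-1}(\monx)$; since the Birkhoff stratum $\Gra^0=L_{alg}^{<0}G\,L_0$ is the open dense orbit, for general $\monx$ its $\Gra$-component $y$ lies in $\Gra^0$ and I may move it to the base point $L_0$. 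This is the first use of genericity, and it forces the underlying bundle to be trivial, so that $\gamma$ is a genuine map. With $y=L_0$, the requirement that the $\Gra$-component of $g^{-1}\monx$ lie in $S_{-h}^P=(L_{alg}P)_0L_{-h}$ becomes $g\in L_{alg}^{>0}G\,z^{h}(L_{alg}P)_0$, which by Lemma~\ref{lem:degdec} says exactly that $\gamma$ has degree $h$, that is, degree $\ud$. The three remaining conditions $g(p_i)^{-1}f_i\in Pw_i^{-1}B/B$ translate, after applying $g(p_i)^{-1}$ to the pair $(\gamma(p_i),f_i)\in G/P\times G/B$, into the statement that $\gamma(p_i)$ is in relative position $w_i$ with respect to the flag $f_i$, i.e.\ $\gamma(p_i)$ lies in the $f_i$-translate of the Schubert cell of dimension $l(w_i)$.

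Thus for general $\monx$ the fiber is identified with the set of degree-$\ud$ maps $\gamma:\PP^1\to G/P$ with $ev_{p_i}(\gamma)\in f_iX_{w_i}$, where $f_1,f_2,f_3$, being the $G/B$-coordinates of a general point of $\mvar$, are independent general flags (and moving $y$ to $L_0$ preserves this genericity). Recall that $\Mor(\PP^1,G/P,\ud)$ has dimension $\dim(G/P)+\sum_\beta d_\beta n_\beta$ and that the $i$-th incidence condition has codimension $\dim(G/P)-l(w_i)$. Applying Kleiman's transversality theorem to the transitive $G$-action on $G/P$ and to the evaluation maps $ev_{p_i}$, the three translated Schubert loci meet transversally for general $f_i$, so the expected dimension of the fiber is $\sum_i l(w_i)+\sum_\beta d_\beta n_\beta-2\dim(G/P)$.

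In case~(i) this expected dimension vanishes (equivalently \eqref{eq:qdeg1} holds), so Kleiman yields a finite, reduced, transverse set of maps; it remains to match its cardinality with the Gromov--Witten number $GW(w_1,w_2,w_3;\ud)$, which is defined as an intersection number on the compactified space $\Md$ of stable maps. For this I would invoke the enumerativity of genus-zero three-pointed invariants of the convex homogeneous variety $G/P$: for general translates of the Schubert cycles the triple intersection $\bigcap_i ev_i^{-1}(f_iX_{w_i})$ on $\Md$ is transverse, avoids the boundary, and every contributing stable map has irreducible domain $\PP^1$ with multiplicity one, so the intersection number equals the number of such honest maps, namely $\#\eta^{-1}(\monx)$. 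In case~(ii), when the numerical equality fails, the expected dimension is nonzero: it is negative, giving an empty general fiber, or positive, in which case the general nonempty fiber is a positive-dimensional quasiprojective variety, hence infinite. The main obstacle is exactly this last identification in case~(i) --- proving that the transverse count on the open locus $\Mor(\PP^1,G/P,\ud)$ coincides with the invariant on $\Md$, that is, controlling the boundary strata of $\Md$ so that bubbled configurations contribute nothing for general incidence conditions; here the convexity of $G/P$ and a dimension estimate on the boundary, as in the standard enumerativity results for homogeneous spaces, are what make the argument go through.
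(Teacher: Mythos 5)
Your proposal is correct and follows essentially the same route as the paper: reduce by $L_{alg}^{<0}G$-equivariance and density of the big cell to the case where the $\Gra$-component of $\monx$ is the base point, identify the fiber via $\Theta$ and Lemma~\ref{lem:degdec} with degree-$\ud$ maps $\PP^1\to G/P$ subject to three translated Schubert incidence conditions, and conclude by Kleiman transversality. The only difference is one of detail: the paper compresses your final enumerativity step (matching the transverse count on $\Mor(\PP^1,G/P,\ud)$ with the intersection number on $\Md$ by controlling the boundary strata) into the single phrase ``the proposition follows from Kleiman's theorem,'' so your elaboration is a filling-in of that step rather than a genuinely different argument.
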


\begin{proof}
  Let $\monx=(g.L_0,g_1B/B,g_2B/B,g_3B/B)\in\mvar$ with $g\in
  L_{alg}G$, $g_i\in G$ for $i=1,2$ and $3$.
Since $\eta$ is $L^{<0}G$-equivariant, $L^{<0}G.L_0$ is dense in $\Gra$ and viewed the assumption of
genericity in the proposition, we may assume that $g$ is trivial.

If $\gamma$ denote an element of $L_{alg}^{<0}G/ L_{alg}^{<0}P$
(sometimes viewed as a curve on $G/P$ using $\Theta$), we denote by
$\tilde\gamma$ a representative in $L_{alg}^{<0}$.
Then $\eta^{-1}(\monx)$ identifies with 
$$
\{\gamma\in L_{alg}^{<0}G/ L_{alg}^{<0}P
\,|\,
\left\{
\begin{array}{l}
\tilde\gamma(p_i)^{-1}g_i B/B\in Pw_i^{-1}B/B\quad \forall i=1,2,3\\[3pt]
\tilde\gamma^{-1}\in (L_{alg}P)_0L_{-h}L_{alg}^{>0}G
\end{array}\right . 
\},
$$
that is with
$$
\{\tilde\gamma\in L_{alg}^{<0}G\,|\,
\left\{\begin{array}{l}
\tilde\gamma(p_i)\in g_iB w_iP,\quad \forall i=1,2,3\\[3pt]
\tilde\gamma\in  L_{alg}^{>0}Gh (L_{alg}P)_0
\end{array}
\right .
\}/ L_{alg}^{<0}P.
$$ 
By Lemma~\ref{lem:degdec}, this set identifies using $\Theta$ with
the set of curves $\gamma\in \Mor(\PP^1,G/P,\ud)$ such that
$\gamma(p_i)\in g_iB w_iP/P$, for any $i=1,2,3$. 
Now, the proposition follows from  Kleiman's theorem.
\end{proof}

\subsection{Other fibers of $\eta$}

Let $\underline{o}$ denote the base point of $L_{alg}G/
(L_{alg}P)_0$. Since $L^{<0}P$ is contained in $(L_{alg}P)_0$,
$L_{alg}^{<0}G.\underline{o}$ identifies with $L_{alg}G/
L^{<0}_{alg}P$.

  \begin{prop}
    \label{prop:fibrespe}
Let $l_1$, $l_2$ and $l_3$ in $L$.  
Set
$\monx=(L_{-h},\,l_1w_1^{-1}B/B,\,l_2w_2^{-1}B/B,\,l_3w_3^{-1}B/B)$ in
$C$.
Then the fiber $\eta^{-1}(\monx)$ identifies with the set of $g.\underline{o}\in L_{alg}^{<0}G/
L_{alg}^{<0}P$ such that 
\begin{enumerate}
\item $g\in z^{-h}L^{>0}_{alg}Gz^h\cap L^{<0}_{alg}G$ ;
\item $\Theta(g\underline{o})(p_i)\in l_iw_i^{-1}Bw_iP/P$ for any $i=1,2,3$.
\end{enumerate}
  \end{prop}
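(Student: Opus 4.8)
The plan is to identify $\eta^{-1}(\monx)$ with a subset of the homogeneous space $L_{alg}^{<0}G/L_{alg}^{<0}P$ and then to unwind the condition of lying in $C^+$ into the two conditions of the statement. Since $\eta([g:\monz])=g\monz$, a class lies in $\eta^{-1}(\monx)$ exactly when $\monz=g^{-1}\monx$; as $g\in L_{alg}^{<0}G$ is invertible in $L_{alg}G$ this recovers $\monz$ from $g$, so the only remaining constraint is $g^{-1}\monx\in C^+$. Using the defining relation $[g:\monz]=[gp:p^{-1}\monz]$ of the twisted product together with the $L_{alg}^{<0}P$-stability of $C^+$, the assignment $[g:\monz]\mapsto gL_{alg}^{<0}P=g\underline{o}$ is well defined and injective on the fiber, and yields
$$\eta^{-1}(\monx)\ \cong\ \{\,g\underline{o}\in L_{alg}^{<0}G/L_{alg}^{<0}P\ :\ g^{-1}\monx\in C^+\,\}.$$
It then remains to see that, for a suitable representative $g$, the membership $g^{-1}\monx\in C^+$ is equivalent to conditions (1) and (2).

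Next I would handle the three flag factors, which I expect to be routine. The element $g^{-1}$ acts on the $i$-th factor $G/B$ through the evaluation $g^{-1}(p_i)=g(p_i)^{-1}$, so the $i$-th component of $g^{-1}\monx$ is $g(p_i)^{-1}l_iw_i^{-1}B/B$, and requiring it to lie in $Pw_i^{-1}B/B$ reads $g(p_i)^{-1}l_iw_i^{-1}\in Pw_i^{-1}B$. Inverting this membership of a double coset gives $g(p_i)\in l_iw_i^{-1}Bw_iP$, that is $g(p_i)P/P\in l_iw_i^{-1}Bw_iP/P$. Since $\Theta(g\underline{o})(p_i)=g(p_i)P/P$ by construction of $\Theta$, this is exactly condition (2); being a statement about the coset $g\underline{o}$ only, it is insensitive to the choice of representative.

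Then I would handle the $\Gra$-factor. The condition on the first coordinate is $g^{-1}L_{-h}\in(L_{alg}P)_0L_{-h}$. Because the stabilizer of $L_{-h}$ in $L_{alg}G$ is $z^{-h}L_{alg}^{>0}Gz^h$, condition (1) is precisely the statement that the representative satisfies $g^{-1}L_{-h}=L_{-h}$; since $L_{alg}^{<0}P\subset(L_{alg}P)_0$, such a representative trivially meets the first-coordinate condition, which settles one implication. For the converse I would start from $g^{-1}L_{-h}=yL_{-h}$ with $y\in(L_{alg}P)_0$ and seek a representative $gp$ (with $p\in L_{alg}^{<0}P$) such that $(gp)^{-1}L_{-h}=L_{-h}$; because the relevant inversions all lie in the group $z^{-h}L_{alg}^{>0}Gz^h$, this reduces to finding $p\in L_{alg}^{<0}P$ with $py\in z^{-h}L_{alg}^{>0}Gz^h$, i.e. to the inclusion $(L_{alg}P)_0\subseteq L_{alg}^{<0}P\cdot\big(z^{-h}L_{alg}^{>0}Gz^h\cap L_{alg}P\big)$.

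This last normalization is the step I expect to be the main obstacle. It is an affine Birkhoff decomposition internal to $L_{alg}P$: the subgroup $z^{-h}L_{alg}^{>0}Gz^h\cap L_{alg}P$ plays the role of a conjugated positive parahoric and $L_{alg}^{<0}P$ that of the opposite negative part, and one must check that their product exhausts the degree-zero subgroup $(L_{alg}P)_0$. I would prove it by reducing along $P=L\ltimes R^u(P)$: the unipotent loops $L_{alg}R^u(P)$ are dealt with by hand, since conjugation by $z^h$ is monomial on the root groups, while the Levi part is the Birkhoff statement that the negative-loop orbit of $L_{-h}$ inside $L_{alg}L$ meets the degree-zero stratum only where expected. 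Equivalently, in geometric terms, that the $L_{alg}^{<0}G$-Birkhoff cell through $L_{-h}$ intersects the $(L_{alg}P)_0$-stratum $S_{-h}^P$ exactly in the single $L_{alg}^{<0}P$-orbit of $L_{-h}$. With this normalization in hand the representative $gp$ satisfies condition (1), and combining it with the flag analysis gives the announced bijection.
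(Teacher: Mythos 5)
Your setup is right and it matches the paper's: identifying $\eta^{-1}(\monx)$ with the set of cosets $g\underline{o}$, $g\in L_{alg}^{<0}G$, such that $g^{-1}\monx\in C^+$, and unwinding the three flag factors into condition~(ii), are exactly the steps the paper performs by referring back to the proof of Proposition~\ref{prop:etaGW}. The entire content of the proposition is therefore the normalization step you isolate: replacing the condition $g\in z^{-h}L_{alg}^{>0}Gz^{h}(L_{alg}P)_0$ by the existence of a representative of the coset lying in $z^{-h}L_{alg}^{>0}Gz^{h}\cap L_{alg}^{<0}G$.

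There is a genuine gap in that step. The inclusion you reduce to, namely $(L_{alg}P)_0\subseteq L_{alg}^{<0}P\cdot\bigl(z^{-h}L_{alg}^{>0}Gz^{h}\cap L_{alg}P\bigr)$, is false whenever $\Delta_P\neq\emptyset$ (hence in all relevant applications, where $P=P_\beta$ is maximal and the rank is at least $2$). Note first that $z^{-h}L_{alg}^{>0}Gz^{h}\cap L_{alg}P=z^{-h}L_{alg}^{>0}Pz^{h}$, since $z^{h}$ normalizes $L_{alg}P$ and $L_{alg}^{>0}G\cap L_{alg}P=L_{alg}^{>0}P$. Now take $\gamma\in\Delta_P$ and $y=z^{\gamma^\vee}$. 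Every $\chi\in X^*(P)$ vanishes on the coroots of $L$, so $y\in(L_{alg}P)_0$. If one could write $y=p\,z^{-h}qz^{h}$ with $p\in L_{alg}^{<0}P$ and $q\in L_{alg}^{>0}P$, then $z^{\gamma^\vee-h}=p\,z^{-h}q$; projecting along $P\to L$, the Birkhoff decomposition of the loop group of $L$ (equivalently, Grothendieck's classification of $L$-bundles on $\PP^1$) would force $\gamma^\vee-h\in W_P\cdot(-h)$. This is impossible: since $h\in\sum_{\beta\in\Delta-\Delta_P}\ZZ_{\geq 0}\beta^\vee$ and $(\gamma^\vee,\beta^\vee)\leq 0$ for distinct simple roots, we get $(\gamma^\vee,h)\leq 0$, hence $\Vert\gamma^\vee-h\Vert^{2}\geq\Vert\gamma^\vee\Vert^{2}+\Vert h\Vert^{2}>\Vert h\Vert^{2}$, while $W_P$ preserves the norm. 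So the factorization you aim at does not hold, and no amount of semidirect-product reduction or Birkhoff theory inside $L_{alg}P$ can prove it: a degree-zero loop in $P$ may project to an arbitrary Birkhoff cell of $L_{alg}L$, so the degree alone never pins down the cell.

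What your reduction discards is the constraint that $y$ arises from an element $g\in L_{alg}^{<0}G$: writing $g=ky$ with $k\in z^{-h}L_{alg}^{>0}Gz^{h}$, the relevant $y$ satisfy $z^{h}y\in L_{alg}^{>0}Gz^{h}L_{alg}^{<0}G$, and Birkhoff uniqueness in $L_{alg}G$ excludes precisely elements such as $z^{\gamma^\vee}$ (same norm computation). Your alternative geometric formulation, $L_{alg}^{<0}G\cdot L_{-h}\cap S_{-h}^P=L_{alg}^{<0}P\cdot L_{-h}$, does keep this constraint and is the correct statement; but it is strictly weaker than the group factorization, not equivalent to it as you claim, and it is the one that must be proved. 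That is exactly what the paper does: it works in $L_{alg}G/(L_{alg}P)_0$ and establishes the orbit-intersection identity $L_{alg}^{<0}G.\underline{o}\cap z^{-h}L_{alg}^{>0}Gz^{h}.\underline{o}=\bigl(L_{alg}^{<0}G\cap z^{-h}L_{alg}^{>0}Gz^{h}\bigr).\underline{o}$, using that the two acting groups and the stabilizer $(L_{alg}P)_0$ all contain $T$, and then converts this into the desired normalization of representatives via $L_{alg}^{<0}G\cap(L_{alg}P)_0=L_{alg}^{<0}P$.
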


  \begin{proof}
    Like in the proof of Proposition~\ref{prop:etaGW}, we obtain that 
$\eta^{-1}(\monx)$ identifies with
$$
\{\tilde\gamma\in  L_{alg}^{<0}G\,:\,\left\{\begin{array}{l}
\tilde\gamma(p_i)\in l_iw_i^{-1}B w_iP,\quad \forall i=1,2,3\\[3pt]
\tilde\gamma\in z^{-h}L_{alg}^{>0}Gz^{h}L_{alg}P_0
\end{array}
\right .
\}/L^{<0}_{alg}P.
$$
In particular, 
$\tilde\gamma \underline{o}$ belongs to $L_{alg}^{<0}G.\underline{o}$
and to $z^{-h}L_{alg}^{>0}Gz^{h}.\underline{o}$. Since
$L_{alg}^{<0}G$, $z^{-h}L_{alg}^{>0}Gz^{h}$, and the stabilizer of
$\underline{o}$  contain $T$ the intersection
$L_{alg}^{<0}G.\underline{o} \cap
z^{-h}L_{alg}^{>0}Gz^{h}.\underline{o}$ is equal to
$(L_{alg}^{<0}G\cap z^{-h}L_{alg}^{>0}Gz^{h}) .\underline{o}$.
The proposition follows.
  \end{proof}


\section{Description of the GIT-cone}

\subsection{Satisfied inequalities}

\begin{lemma}
  \label{lem:ineqsat}
Let $(\lambda_1,\lambda_2,\lambda_3)\in(X^*(T)^+_\QQ)^3$ and $\bl\in
 \ZZ_{>0}$ such that $\frac{\lambda_1}\bl$, $\frac{\lambda_2}\bl$ and $\frac{\lambda_3}\bl$ belong to the
  alcove $\alc^*$. 
Let $\tau$ be a dominant one parameter subgroup of $T$ and set
$P=P(\tau)$.
Let  $w_1$, $w_2$, $w_3$ in $W^P$ and let $\ud=\sum_{\beta\in\Delta-\Delta_P}d_\beta\sigma_{s_\beta}^*$
for some $d_\beta\in \ZZ_{\geq 0}$. 
Set $h=\sum_{\beta\in\Delta-\Delta_P}d_\beta\beta^\vee\in X_*(T)$.
Assume that
$$
GW(w_1,w_2,w_3;\ud)\neq 0.
$$
If $(\lambda_1,\lambda_2,\lambda_3,\bl)\in \cone^{\rm nss}(\mvar)$ then
\begin{eqnarray}
 \label{eq:insat}
   \langle w_1\tau,\lambda_1\rangle+
\langle w_2\tau,\lambda_2\rangle+
\langle w_3\tau,\lambda_3\rangle\leq\bl\langle\tau,h\rangle.
\end{eqnarray}
\end{lemma}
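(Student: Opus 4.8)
The goal is to show that membership of $(\lambda_1,\lambda_2,\lambda_3,\bl)$ in $\cone^{\rm nss}(\mvar)$ forces the linear inequality~\eqref{eq:insat} whenever the Gromov-Witten invariant $GW(w_1,w_2,w_3;\ud)$ is nonzero. The strategy is to exhibit an explicit point $\monx\in\mvar$ that is numerically semistable (for a suitable power of $\Li$) and whose combinatorial type realizes the data $(h,w_1,w_2,w_3)$, and then to read off~\eqref{eq:insat} from the semistability inequality $\mu^\Li(g\monx,\tau)\leq 0$ together with the explicit formula of Lemma~\ref{lem:calculmu}. The nonvanishing of the GW-invariant is exactly what guarantees that such a point exists, via the interpretation of GW-invariants as counts of curves provided by Proposition~\ref{prop:etaGW}.

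First I would use that $(\lambda_1,\lambda_2,\lambda_3,\bl)\in\cone^{\rm nss}(\mvar)$ means, after clearing a positive integer denominator, that $\mvar^{\rm nss}(\Li)$ is nonempty for the corresponding line bundle $\Li=\Li(\bl\Lambda)\otimes\Li(\lambda_1)\otimes\Li(\lambda_2)\otimes\Li(\lambda_3)$. By openness of the semistable locus (Lemma~\ref{lem:ssopen}), a \emph{generic} point of $\mvar$ is semistable. Since $GW(w_1,w_2,w_3;\ud)\neq 0$, Proposition~\ref{prop:etaGW} tells us that for $\monx$ sufficiently general the fiber $\eta^{-1}(\monx)$ is nonempty; choosing $\monx$ generic enough to be simultaneously semistable and in the image of $\eta$, pick $[g:\monz]\in\eta^{-1}(\monx)$ with $\monz\in C^+$, so that $g^{-1}\monx=\monz$ lies in $(L_{alg}P)_0L_{-h}\times Pw_1^{-1}B/B\times Pw_2^{-1}B/B\times Pw_3^{-1}B/B$. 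In particular $\monz$ belongs to $S_{-h}\times Uw_1^{-1}B/B\times Uw_2^{-1}B/B\times Uw_3^{-1}B/B$ after absorbing the $(L_{alg}P)_0$- and $P$-factors into $g$, which I can arrange since $g\in L^{<0}_{alg}G$ may be enlarged by the appropriate element of $L_{alg}^{<0}P$.

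Next, semistability of $\monx$ gives $\mu^\Li(g'\monx,\tau)\leq 0$ for every $g'\in L^{<0}_{alg}G$ and every dominant $\tau$; applying this with the $g$ above yields $\mu^\Li(\monz,\tau)\leq 0$. Now Lemma~\ref{lem:calculmu}, applied to $\monz\in S_{-h}\times Uw_1^{-1}B/B\times\cdots$, computes
$$
\mu^\Li(\monz,\tau)=\bl(h,\tau)+\sum_{i=1}^3\langle w_i\tau,\lambda_i\rangle.
$$
Rewriting $(h,\tau)=\langle\tau,h\rangle$ under the identification $X^*(T)_\RR\cong X_*(T)_\RR$, the inequality $\mu^\Li(\monz,\tau)\leq 0$ is precisely~\eqref{eq:insat}. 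This is the whole argument once the genericity bookkeeping is settled.

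\textbf{Main obstacle.} The delicate step is the simultaneous genericity: I must choose one point $\monx$ that is at once numerically semistable and generic enough for Proposition~\ref{prop:etaGW} to guarantee $\eta^{-1}(\monx)\neq\emptyset$. Each condition holds on a dense (open, resp.\ constructible) subset, so their intersection is nonempty, but one must be careful that the ``sufficiently general'' locus in Proposition~\ref{prop:etaGW} and the open semistable locus overlap, and that the chosen fiber element can be normalized so that Lemma~\ref{lem:calculmu} applies verbatim. A secondary subtlety is reconciling the representative $\monz\in C^+$ (which only places the flag-components in $Pw_i^{-1}B/B$) with the $U$-orbit hypothesis $Uw_i^{-1}B/B$ required by Lemma~\ref{lem:calculmu}; this is handled by moving the Levi and parabolic parts into the $L^{<0}_{alg}G$-factor $g$, which does not affect the semistability inequality since it quantifies over all such $g$.
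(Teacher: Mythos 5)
Your proposal follows the same route as the paper's own proof: nonvanishing of the Gromov--Witten invariant together with Proposition~\ref{prop:etaGW} and Lemma~\ref{lem:ssopen} produces a numerically semistable point in the image of $\eta$; since the semistability condition quantifies over all $g\in L^{<0}_{alg}G$, it is invariant under the $L^{<0}_{alg}G$-action, so some point $\monz$ of $C^+$ is itself semistable; then $\mu^\Li(\monz,\tau)\leq 0$ combined with Lemma~\ref{lem:calculmu} yields~\eqref{eq:insat}. You have also correctly isolated the only delicate point, namely that $\monz$ lies in $C^+=(L_{alg}P)_0L_{-h}\times \prod_i Pw_i^{-1}B/B$, whereas Lemma~\ref{lem:calculmu} is stated for points of $S_{-h}\times\prod_i Uw_i^{-1}B/B$.

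However, your resolution of that point is not valid. You propose to absorb the $(L_{alg}P)_0$- and $P$-factors into $g$ by enlarging $g$ by an element of $L_{alg}^{<0}P$. This cannot work in general: $L^{<0}_{alg}P$ is a proper subgroup of $(L_{alg}P)_0$, and $L^{<0}_{alg}P\cdot S_{-h}$ is in general strictly smaller than $S^P_{-h}=(L_{alg}P)_0L_{-h}=\bigsqcup_{h'\equiv -h}S_{h'}$. The obstruction is of Birkhoff type and is already visible for $P=G$: by the claim in the proof of Proposition~\ref{prop:Mbiendef}, $S_{-h}\cap\Gra^{h_0}\neq\emptyset$ forces $h\leq -w_0h_0$, so for $h\neq 0$ every translate $gS_{-h}$ with $g\in L^{<0}_{alg}G$ misses the open Birkhoff stratum $\Gra^{0}$; hence a point of $(L_{alg}G)_0L_{-h}=\Gra$ lying over $\Gra^0$ admits no such normalization. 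The repair, which is what the paper's terse last sentence implicitly does, requires no normalization at all: the point $\monz$ lies in some stratum $S_{h'}\times\prod_i Uv_i^{-1}B/B$ with $h'+h\in X_*(T\cap L^{\rm ss})$ and $v_i\in w_iW_P$; applying Lemma~\ref{lem:calculmu} to that stratum and using that $P(\tau)=P$ implies both $W_P\tau=\tau$ (so $\langle v_i\tau,\lambda_i\rangle=\langle w_i\tau,\lambda_i\rangle$) and $(\xi,\tau)=0$ for all $\xi\in X_*(T\cap L^{\rm ss})$ (so $(h',\tau)=-(h,\tau)$), one finds that $\mu^\Li(\monz,\tau)$ equals the value at the base point $(L_{-h},w_1^{-1}B/B,w_2^{-1}B/B,w_3^{-1}B/B)$. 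Equivalently, $\mu^\Li(\cdot,\tau)$ is constant on $C^+$, because $C^+$ is the attracting set of the connected $\tau$-fixed set $C$ and $\mu^\Li(\monz,\tau)$ only depends on the $\tau$-weight on the fiber of $\Li$ over $\lim_{t\to 0}\tau(t)\monz\in C$. With this replacement your argument becomes a correct proof, essentially identical to the paper's.
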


\begin{proof}
Consider   $$
C^+=(L_{alg}P)_0L_{-h}\times Pw_1^{-1}B/B \times Pw_2^{-1}B/B \times Pw_3^{-1}B/B,
$$
and  the map
$$
\begin{array}{cccc}
  \eta\,:&L_{alg}^{<0}G\times_{L_{alg}^{<0}P} C^+&\longto&\mvar\\
&[g:\monx]&\longmapsto&g\monx.
\end{array}
$$

By Proposition~\ref{prop:etaGW} and Lemma~\ref{lem:ssopen}, there
exists a numericaly semistable point in the image of $\eta$. 
Then  there exists  a numericaly semistable point $\monx$ in $C^+$.
We deduce that $\mu^{\Li}(\monx,\tau)\leq 0$. 
By Lemma~\ref{lem:calculmu}, this inequality is equivalent to the
inequality to prove.
\end{proof}

\subsection{A first description of $\cone^{\rm nss}(\mvar)$}

We first reprove Teleman-Woodward's Theorem~\ref{th:TW} in our context.

\begin{lemma}\label{lem:TW}
  Let $(\lambda_1,\lambda_2,\lambda_3)\in(X^*(T)^+_\QQ)^3$ and $\bl\in
 \ZZ_{>0}$ such that $\frac{\lambda_1}\bl$, $\frac{\lambda_2}\bl$ and $\frac{\lambda_3}\bl$ belong to the
  alcove $\alc^*$. 
Then $(\lambda_1,\lambda_2,\lambda_3,\bl)\in \cone^{\rm nss}(\mvar)$  if and only
if
\begin{eqnarray}
   \langle w_1\varpi_{\beta^\vee},\lambda_1\rangle+
\langle w_2\varpi_{\beta^\vee},\lambda_2\rangle+
\langle w_3\varpi_{\beta^\vee},\lambda_3\rangle\leq\frac 2
{(\beta,\beta)} \bl d,
\end{eqnarray}
 for any simple root $\beta$,
any nonnegative integer $d$ and any $(w_1,w_2,w_3)\in (W^{P_\beta})^3$ such that 
\begin{eqnarray}
GW(w_1,w_2,w_3;d\sigma_{s_\beta}^*)=1.
\end{eqnarray}
\end{lemma}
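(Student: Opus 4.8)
The equivalence splits into an easy implication, coming straight from Lemma~\ref{lem:ineqsat}, and a hard one, for which the plan is to analyse the facets of the rational polyhedral cone $\cone^{\rm nss}(\mvar)$ by means of the open stratum and the uniqueness of the canonical reduction. For the \emph{only if} part, fix $\beta$, $d\geq 0$ and $(w_1,w_2,w_3)\in(W^{P_\beta})^3$ with $GW(w_1,w_2,w_3;d\sigma_{s_\beta}^*)=1$, and apply Lemma~\ref{lem:ineqsat} with $P=P_\beta$, $\tau=\varpi_{\beta^\vee}$ (which is dominant and satisfies $P(\tau)=P_\beta$), $\ud=d\sigma_{s_\beta}^*$ and $h=d\beta^\vee$. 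Since $GW=1\neq0$, it gives $\sum_i\langle w_i\varpi_{\beta^\vee},\lambda_i\rangle\leq\bl(h,\varpi_{\beta^\vee})$, and the scalar product is computed from $\beta^\vee=\frac{2}{(\beta,\beta)}\beta$, $\varpi_{\beta^\vee}=\frac{2}{(\beta,\beta)}\varpi_\beta$ and $(\varpi_\beta,\beta)=\frac{1}{2}(\beta,\beta)$ to be $(h,\varpi_{\beta^\vee})=\frac{2}{(\beta,\beta)}d$. This is exactly the asserted inequality.

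For the \emph{if} part I would argue by contraposition in a polyhedral setting. By the first part, $\cone^{\rm nss}(\mvar)$ is contained, inside the alcove region $\{\lambda_i/\bl\in\alc^*\}$, in the set cut out by all inequalities $\iI_\beta(w_1,w_2,w_3;d)$ with $GW=1$; since $\cone^{\rm nss}(\mvar)$ is a rational polyhedral cone, the reverse inclusion follows once each of its facets (other than dominance or alcove walls) is shown to lie on one of these hyperplanes. I would therefore take $(\lambda_1,\lambda_2,\lambda_3,\bl)$ generic on such a facet; for parameters just beyond it every point of $\mvar$ is unstable (numerical semistability being scale invariant, and using Lemma~\ref{lem:ssopen}), so the open-stratum analysis of Proposition~\ref{prop:openstratum} provides a common indivisible dominant adapted one-parameter subgroup $\tau^\circ$, a parabolic $P=P(\tau^\circ)$, and an $(L_{alg}P)_0\times P^3$-orbit $C^+$ attached to a degree $h$ and to elements $w_1,w_2,w_3\in W^P$. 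By finiteness of the possible Harder--Narasimhan data this type is constant near the facet, and the associated inequality $\sum_i\langle w_i\tau^\circ,\lambda_i\rangle\leq\bl(h,\tau^\circ)$ is tight along it.

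Two points then finish the argument. Since the facet has codimension one, the destabilizing direction $\tau^\circ$ must span a single ray of the Weyl fan, so $P=P(\tau^\circ)$ is a maximal parabolic $P_\beta$ with $\Delta-\Delta_P=\{\beta\}$; then $\tau^\circ$ is proportional to $\varpi_{\beta^\vee}$, $h=d\beta^\vee$, and the supporting hyperplane is that of $\iI_\beta(w_1,w_2,w_3;d)$. Moreover, over a point of the open stratum the fibre of the map $\eta$ of Proposition~\ref{prop:etaGW} is reduced to a single point, by uniqueness of the canonical reduction (Proposition~\ref{prop:unicitytau}); hence $GW(w_1,w_2,w_3;d\sigma_{s_\beta}^*)=1$ rather than merely nonzero, and the facet is one of the listed hyperplanes.

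The hard part will be precisely this last step: establishing that on a codimension-one facet the adapted subgroup spans a ray, so that $P$ is maximal, and that the generic $\eta$-fibre is a single point, which is what upgrades Lemma~\ref{lem:ineqsat} (valid for every nonzero $GW$) to the sharp list with $GW=1$. This requires combining Propositions~\ref{prop:openstratum}, \ref{prop:unicitytau} and \ref{prop:etaGW} with a dimension count for the instability stratum, in the spirit of the Behrend-type argument already used for the openness of the open stratum.
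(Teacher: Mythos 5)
Your forward implication is correct and coincides with the paper's: apply Lemma~\ref{lem:ineqsat} with $P=P_\beta$, $\tau=\varpi_{\beta^\vee}$, $h=d\beta^\vee$, and the computation $(\varpi_{\beta^\vee},d\beta^\vee)=\frac{2}{(\beta,\beta)}d$ is right. Your converse also assembles the right ingredients (open stratum, uniqueness of the canonical reduction forcing the $\eta$-fibre to be a single point, Proposition~\ref{prop:etaGW} converting that into $GW=1$). But there is a genuine gap, and you name it yourself: the claim that on a codimension-one facet the adapted one-parameter subgroup $\tau^\circ$ ``must span a single ray of the Weyl fan'', so that $P(\tau^\circ)$ is maximal, is asserted and then deferred as ``the hard part''. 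Nothing in your argument connects the codimension of a facet of $\cone^{\rm nss}(\mvar)$, a cone living in $(X^*_\QQ(T))^3\times\QQ$, with the dimension of the face of the Weyl chamber containing $\tau^\circ$; this implication is not automatic and is exactly what needs proof. A second unproved step is your claim that the HN type is constant near a facet: Proposition~\ref{prop:openstratum} is established for a fixed line bundle $\Li$, not for a family of nearby parameters, so ``finiteness of the possible Harder--Narasimhan data'' does not by itself give this constancy.

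The paper's proof is organized so that neither claim is ever needed. It argues by direct contraposition for the given $\Li$: if $\mvar^{\rm nss}(\Li)$ is empty, the open stratum of that single $\Li$ produces data $(\tau_0,P,h,w_1,w_2,w_3)$ with $GW(w_1,w_2,w_3;\ud)=1$ whose inequality~\eqref{eq:insat} at $\tau=\tau_0$ is violated (via Lemma~\ref{lem:calculmu} and $\mu^\Li(\monx,\tau_0)=d_0>0$), with no attempt to show $P$ maximal. This already proves that $\cone^{\rm nss}(\mvar)$ is cut out by the $GW=1$ inequalities attached to \emph{all} standard parabolics. The restriction to maximal parabolics is then handled by a purely convex-geometric redundancy argument: for fixed $(P,h,w_i)$ with $GW=1$, Lemma~\ref{lem:ineqsat} makes inequality~\eqref{eq:insat} valid for \emph{every} dominant $\tau'$ with $P(\tau')=P$; when $P$ is not maximal this set of $\tau'$ spans a cone of dimension at least two, and the functional defining the inequality depends linearly (and injectively) on $\tau'$, so no single such inequality can generate an extremal ray of the dual cone, i.e.\ define a facet. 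If you want to complete your proposal, this linear-dependence argument is what fills your ``hard part'' --- but note that it does so by showing non-maximal inequalities are redundant in the final list, not by showing that the destabilizing $\tau^\circ$ of points along a facet lies on a ray, which is the statement your plan would still leave unproved.
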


\begin{proof}
If   $(\lambda_1,\lambda_2,\lambda_3,\bl)\in \cone^{\rm nss}(\mvar)$
then the inequalities are satisfied by Lemma~\ref{lem:ineqsat}. 
Conversely assume that $(\lambda_1,\lambda_2,\lambda_3,\bl)\not\in
\cone^{\rm nss}(\mvar)$ that is that $\mvar^{\rm nss}(\Li)$ is empty.
Consider the open stratum $\mvar^\circ(\Li)$ and $d_0$ the common
value of $M^\Li(\monx)$ for $\monx$ in $\mvar^\circ(\Li)$.
Let $\tau_0$, $P=P(\tau_0)$ and $C^+$ be like in
Proposition~\ref{prop:openstratum}. 
Write 
$$C=(L_{alg}L^{ss}.L_{-h},L.w_1^{-1}B/B,L.w_2^{-1}B/B,L.w_3^{-1}B/B),$$ 
with usual notation.
Write $h=\sum_{\beta\in\Delta-\Delta_P}d_\beta\beta^\vee$ and set 
 $\ud=\sum_{\beta\in\Delta-\Delta_P}d_\beta\sigma_{s_\beta}^*$.
Consider the map
$$
\begin{array}{cccc}
  \eta\,:&L_{alg}^{<0}G\times_{L_{alg}^{<0}P} C^+&\longto&\mvar\\
&[g:\monx]&\longmapsto&g\monx.
\end{array}
$$
By Proposition~\ref{prop:openstratum}, for any $\monx\in
\mvar^\circ(\Li)$, the fiber $\eta^{-1}(\monx)$ is not empty.
By Proposition~\ref{prop:unicitytau}, this fiber is reduced to one
point. 
Since $\mvar^\circ(\Li)$ is open, Proposition~\ref{prop:etaGW} implies
that 
$$
GW(w_1,w_2,w_3;\,\ud)=1.
$$
Lemma~\ref{lem:ineqsat} shows that inequality~\eqref{eq:insat} is
satisfied for any $\tau$ such that $P=P(\tau)$ and any point in
$\cone^{\rm nss}(\mvar)$. 
By construction the $\mvar^\circ(\Li)\cap C^+$ is not empty. Fix
$\monx$ in it. Then $\mu^\Li(\monx,\tau_0)=d_0>0$. Hence
Lemma~\ref{lem:calculmu} implies that  
inequality~\eqref{eq:insat} for $\tau=\tau_0$ is not satisfied by
$\Li$.\\

 With Lemma~\ref{lem:ineqsat}, we just proved that a point belongs to
 $\cone^{\rm nss}(\mvar)$ if and only if it satisfies the
 inequalities~\eqref{eq:insat} for any $\tau$, $h$ and $w_i$'s such
 that $
GW(w_1,w_2,w_3;\,\ud)=1.
$
It remains to prove that the inequalities coming from nonmaximal
parabolic subgroups are redundant. 
Consider such an inequality~\eqref{eq:insat} associated to some
non-maximal standard
parabolic subgroup $P$, some $\tau\in X_*(T)$,  and $w_1,w_2,w_3$ and
$h$.
Dualy, we have to prove that this  inequality~\eqref{eq:insat} does not generate an
extremal ray of the dual cone of $\cone^{\rm nss}(\mvar)$. 
By Lemma~\ref{lem:ineqsat},   inequality~\eqref{eq:insat}  holds for
any $\tau'\in X_*(T)$ such that $P=P(\tau')$. 
But, the set $\tau'$ such that $P=P(\tau')$ generate an open cone of
dimension two in $X_*(T)_\QQ$ and inequality~\eqref{eq:insat} depends
linearly on $\tau'$. Hence  inequality~\eqref{eq:insat} cannot be
extremal.
\end{proof}

\subsection{End of the proof of Theorem~\ref{th:mainGIT}}

\begin{proof}
It remains to prove that if
$(\lambda_1,\lambda_2,\lambda_3,\bl)\not\in \cone^{\rm nss}(\mvar)$,
then there exists an inequality~\eqref{eq:mainGIT} that satisfies
condition~\eqref{eq:filtrationGIT} and that is not fullfilled by this
point.
Consider the open strata
$\mvar^\circ(\Li)$. 
 Let $\tau_0$, $P=P(\tau_0)$ and $C^+$ be like in
Proposition~\ref{prop:openstratum}. 
Let $(L_{-h},w_1^{-1}B/B,w_2^{-1}B/B,w_3^{-1}B/B)\in C^+$ with usual
notation.
Let $h_{PW}\in X_*(T)$ be the Peterson-Woodward lifting of $h$. 
Let $\monx\in C^+\cap \mvar^\circ(\Li)$ and set $\monz=\lim_{t\to
  0}\tau(t)\monx$. 
By Proposition~\ref{prop:adapLss}, $\monz$ is numericaly semistable 
for the group $L_{alg}^{<0}L$ relatively to the line
  bundle $\Li\otimes
  -\mu^\Li(\monx,\tau_0)\frac{(\tau_0,\square)}{\Vert\tau_0\Vert}$.
In particular, $C^{\rm nss}(\Li\otimes
  -\mu^\Li(\monx,\tau_0)\frac{(\tau_0,\square)}{\Vert\tau_0\Vert},L_{alg}^{<0}L)$
  is not empty and open by Lemma~\ref{lem:ssopen}.
Now, Proposition~\ref{prop:adapLss} implies that for general $\mony$
in $C$, we have $\Vert\tau_0\Vert M^\Li(\mony)=\mu^\Li(\monx,\tau_0)$.
Then, Proposition~\ref{prop:unicitytau} shows that $\eta^{-1}(\mony)$
is one point for general $\mony$ in $C$.\\

The Peterson-Woodward lifting $h_{PW}$ has the property that the
$L_{alg}^{<0}L$-orbit of $L_{-h_{PW}}$ is dense in $L_{alg}L^{ss}.L_{-h}$. 
Since $\eta$ is equivariant, we deduce that for general $l_1$, $l_2$
and $l_3$ in $L$ the fiber
$\eta^{-1}(L_{-h_{PW}},\,l_1w_1^{-1}B/B,\,l_2w_2^{-1}B/B,\,l_3w_3^{-1}B/B)$
is one point.\\

Let $P^-$ denote the parabolic subgroup containing $T$ and opposite to
$P$. Consider the Lie algebra $Lie(R^u(P^-))$ of the unipotent radical
of $P^-$. It is a $L$-module. Consider its decomposition in weight
spaces under the action of $Z$:
\begin{eqnarray}
  \label{eq:decLieP}
  Lie(R^u(P^-))=\bigoplus_{\chi\in X^*(Z)}Lie(R^u(P^-))_\chi.
\end{eqnarray}
It is known that each $Lie(R^u(P^-))_\chi$ is an irreducible
$L$-module.

Consider the open $P^-$-orbit $\Omega$ in $G/P$. It is 
stable by the action of $L$ and isomorphic as a $L$-variety to
$Lie(R^u(P^-))$.
Let us fix such an
isomorphism $\zeta\,:\,Lie(R^u(P^-))\longto \Omega$.
For each $i=1,2,3$, set $V_i=\zeta^{-1}(\Omega\cap
w_i^{-1}Bw_iP/P)$. 
It is well known that $V_i$ is a linear subspace of
$Lie(R^u(P^-))$ stable by $Z$. Then 
\begin{eqnarray}
  \label{eq:decV}
  V_i=\bigoplus_{\chi\in X^*(Z)} V_{i,\chi},{\rm\,where}
\quad V_{i,\chi}=V_i\cap Lie(R^u(P^-))_\chi).
\end{eqnarray}

\bigskip
Set $K_h=z^{-h_{PW}}L^{>0}_{alg}Gz^{h_{PW}}\cap L^{<0}_{alg}G$. It is
a finite dimensional connected algebraic group containing $T$. Consider
$$
\mcM_h= K_h/(L_{alg}^{<0}P\cap K_h).
$$
Moreover, $\mcM$ contains 
$\mcM^\circ_h= K_h\cap L_{alg} R^u(P^-)$ as an open subset. 
But $\mcM_h$ is contained in
$L^{<0}_{alg}G/L_{alg}^{<0}P$ and  any $m\in\mcM_h$ can be seen as a
regular map $\Theta(m)$ from $\PP^1$ to $G/P$.
If $m$ belongs to $\mcM^\circ_h$, then $\Theta(m)(\PP^1-\{0\})$ is
contained in $\Omega$. Hence $\mcM^\circ_h$ can be seen as a set of
polynomial functions from $\PP^1-\{0\}$ to $\Omega$. 
Composing with $\zeta$ and identifying $\PP^1-\{0\}$ with $\CC$ (with
the coordiante $z^{-1}$), we get an embedding of $\mcM^\circ_h$ in ${\rm
  Mor}(\CC,Lie(R^u(P^-)))$.
For each root $\alpha\in \Phi(G/P)$, let us fix a nonzero element
$\xi_{-\alpha}\in Lie(R^u(P^-))_{-\alpha}$ of weight $-\alpha$.
The roots of $z^{-h_{PW}}L^{>0}_{alg}Gz^{h_{PW}}$ are the images of
the roots of $L^{>0}_{alg}G$ by $-h_{PW}$ viewed as an element of the
affine Weyl group (see Section~\ref{sec:Waff}). 
The root $\alpha+n\delta$ is a root of $K_h$ if and only if
 $-\langle h_{PW},\alpha\rangle\leq n\leq 0$.
Hence
$$
\mcM^\circ_h:=\{\sum_{\alpha\in \Phi(G/P)}P_\alpha\xi_{-\alpha}\,:\,
P_\alpha\in\CC[z^{-1}] \mbox{ and } \deg(P_\alpha)\leq  \langle h_{PW},\alpha\rangle\}
$$
has a natural structure of a complex vector space.
Note that, by convention, $\deg(0)=-\infty$. 

Consider now
\begin{eqnarray}
  \label{eq:defMchi}
  \mcM^\circ_{h,\chi}=\{\sum_{\alpha\in \Phi(G/P,\chi)}P_\alpha\xi_{-\alpha}\,:\,
P_\alpha\in\CC[z^{-1}] \mbox{ and } \deg(P_\alpha)\leq  \langle h_{PW},\alpha\rangle\}. 
\end{eqnarray}
Then $\mcM^\circ$  is a product:
\begin{eqnarray}
  \label{eq:decM}
  \mcM^\circ_h=\bigoplus_{\chi\in X^*(Z)}\mcM^0_{h,\chi}.
\end{eqnarray}
For any linear subspace $W$ in $Lie(R^u(P^-))$ and any $p\in \PP^1-\{0\}$, set
$$
\mcM^\circ_h(p,W):=\{m\in \mcM^\circ_h\,:\, m(p)\in W\}.
$$
Since $m\longmapsto m(p)$ is linear, $\mcM^\circ_h(p,W)$ is a linear
subspace and 
\begin{eqnarray}
  \label{eq:ineqdim78}
  \dim(\mcM^\circ_h)-\dim(\mcM^\circ_h(p,W))\leq \dim(Lie(R^u(P^-)))-\dim(W).
\end{eqnarray}
Similarly, for any $\chi\in X^*(Z)$, for any linear subspace $W$ in
$Lie(R^u(P^-))_\chi$  and any $p\in \PP^1-\{0\}$, set
$$
\mcM^\circ_{h,\chi}(p,W):=\{m\in \mcM^\circ_{h,\chi}\,:\, m(p)\in W\}.
$$
Then
\begin{eqnarray}
  \label{eq:ineqdimMchiW}
  \dim(\mcM^\circ_{h,\chi})-\dim(\mcM^\circ_{h,\chi}(p,W))\leq 
\dim(Lie(R^u(P^-))_\chi)-\dim(W).
\end{eqnarray}

Consider
\begin{eqnarray}
  \label{eq:defChi}
  \Chi=\{(l_1,l_2,l_3,m)\in L^3\times 
  \mcM^\circ_{h}\,:\,
\forall i=1,2,3\quad m(p_i)\in l_iV_{i}\},
\end{eqnarray}
and the two projections 
\begin{center}
\begin{tikzpicture}
  \matrix (m) [matrix of math nodes,row sep=3em,column sep=2.5em,minimum width=2em]
  {&\Chi&\\
L^3&&\mcM^\circ_{h}. \\};
  \path[-stealth]  
    (m-1-2) edge node[left] {p}  (m-2-1) 
    (m-1-2) edge node[auto] {q}  (m-2-3);       
\end{tikzpicture} 
\end{center}

Fix any $l_1$, $l_2$
and $l_3$ in $L$. Then $m\in q(p^{-1}(l_1,l_2,l_3))$ if and only if,
for any $i=1,2,3$, $m(p_i)$ belongs to $l_iV_i$. In other word
$$
q(p^{-1}(l_1,l_2,l_3))=\bigcap_{i=1}^3 \mcM^\circ_{h}(p_i,l_iV_i).
$$
The decomposition~\eqref{eq:decLieP} is respected by the action of
$L$, the subsapces $V_i$ (see~\eqref{eq:decV}) and the vector space 
$\mcM^\circ_h$ (see~\ref{eq:decM}). Hence
$$
q(p^{-1}(l_1,l_2,l_3))\simeq\bigoplus_{\chi\in X^*(Z)}
\bigcap_{i=1}^3 \mcM^\circ_{h,\chi}(p_i,l_iV_{i,\chi}).
$$

Proposition~\ref{prop:fibrespe} implies that for general $l_1$, $l_2$
and $l_3$ in $L$,
$p^{-1}(l_1,l_2,l_3)$ is one point. Then, for any $\chi\in X^*(Z)$,
$$
\sum_{i=1}^3
\dim(\mcM^\circ_{h,\chi})-\dim(\mcM^\circ_{h,\chi}(p_i,l_iV_{i,\chi}))\geq 
\dim(\mcM^\circ_{h,\chi}).
$$
Combining with~\eqref{eq:ineqdimMchiW}, we obtain
\begin{eqnarray}
  \label{eq:dim10}
3 \dim(Lie(R^u(P^-))_\chi)-
\sum_{i=1}^3
\dim(V_{i,\chi})
\geq 
\dim(\mcM^\circ_{h,\chi}).
\end{eqnarray}
From~\eqref{eq:defMchi}, we deduce
\begin{eqnarray}
  \label{eq:dim11}
\dim(\mcM^\circ_{h,\chi})\geq \sum_{\alpha\in \Phi(G/P,\chi)}
\bigg(\langle h_{PW},\alpha\rangle+1\bigg),
\end{eqnarray}
and 
\begin{eqnarray}
  \label{eq:dim12}
3 \dim(Lie(R^u(P^-))_\chi)-
\sum_{i=1}^3
\dim(V_{i,\chi})
\geq 
\sum_{\alpha\in \Phi(G/P,\chi)}
\bigg(\langle h_{PW},\alpha\rangle+1\bigg).
\end{eqnarray}
By summing inequalities~\eqref{eq:dim12}, when $\chi$ runs in $X^*(Z)$,
we find
$$3\dim(G/P)-\sum_{i=1}^3\dim(V_i)\geq
\dim(G/P)+
\sum_{\alpha\in
    \Phi(G/P)}\langle h_{PW},\alpha\rangle.
$$
Since $GW(w_1,w_2,w_3;\underline{d})=1$, this inequality is actually
an equality. 
Hence
each inequalities~\eqref{eq:dim12} is an equality.
These equalities are readily equivalent to condition~\eqref{eq:filtrationGIT}.
\end{proof}

\bigskip
\begin{NB}
  The above proof shows that inequality~\eqref{eq:dim11} is an
  equality, in the setting of the theorem. With~\eqref{eq:defMchi}
  this implies that 
  \begin{eqnarray}
    \label{eq:conjcomb}
    \forall\alpha\in \Phi(G/P,\chi)\quad \langle
    h_{PW},\alpha\rangle\geq -1.
  \end{eqnarray}
It is a natural question to ask if inequality~\eqref{eq:conjcomb} is
satisfied for any maximal $P$ (associated to the simple root $\beta$)
and any $h\in\ZZ_{\geq 0}\beta^\vee$.
\end{NB}

\bibliographystyle{amsalpha}
\bibliography{qHorn}

\begin{center}
  -\hspace{1em}$\diamondsuit$\hspace{1em}-

\end{center}

\end{document}